\numberwithin{equation}{section}
\newtheorem{lem}{Lemma}[section]
\newtheorem{theo}{Theorem}[section]
\newtheorem{prop}{Proposition}[section]
\newtheorem{cor}{Corollary}[section]
\newtheorem{rmk}{Remark}[section]
\newtheorem{dfn}{Definition}[section]
\newtheorem{asmp}{Assumption}[section]
\newtheorem{ex}{Example}[section]
\newtheorem*{theo*}{Theorem}
\newcommand{\bbR}{\mathbb{R}}
\newcommand{\bbE}{\mathbb{E}}
\newcommand{\bbN}{\mathbb{N}}
\newcommand{\scrP}{\mathcal{P}}
\newcommand{\scrM}{\mathcal{M}}
\newcommand{\calB}{\mathcal{B}}
\newcommand{\calD}{\mathcal{D}}
\newcommand{\calS}{\mathcal{S}}
\newcommand{\calF}{\mathcal{F}}
\newcommand{\calL}{\mathcal{L}}
\newcommand{\calM}{\mathcal{M}}
\newcommand{\calN}{\mathcal{N}}
\newcommand{\bbP}{\mathbb{P}}
\newcommand{\bbQ}{\mathbb{Q}}
\newcommand{\E}[2][]{\mathbb{E}_{#1}\left[#2\right]}
\newcommand{\crochetb}[1]{\left\lbrack{#1}\right\rbrack}
\newcommand{\crochet}[1]{\left\langle{#1}\right\rangle}
\newcommand{\accro}[1]{\left\{{#1}\right\}}
\newcommand{\abs}[1]{\left| {#1} \right|}
\newcommand{\Kummer}{{}_1F_1}
\newcommand{\wt}[1]{\widetilde{#1}}
\newcommand{\ind}[1]{\mathbbm{1}_{\accro{#1}}}
\title{Asymptotic Lower Bounds for Optimal Tracking: \\a Linear Programming Approach }
\date{}
\author{Jiatu Cai$^{1}$, Mathieu Rosenbaum$^{2}$ and Peter Tankov$^{1}$
\\[0.4cm]
{\normalsize
$^1$ Laboratoire de Probabilit\'es et Mod\`eles Al\'eatoires,} \\ {\normalsize
    Universit\'e Paris Diderot (Paris 7)} \\
$~~$\\
{\normalsize $^2$ Laboratoire de Probabilit\'es et Mod\`eles Al\'eatoires,} \\ {\normalsize
    Universit\'e Marie et Pierre Curie (Paris 6)}
}
\begin{document}
\maketitle
\begin{abstract}
 \noindent We consider the problem of tracking a target whose dynamics is modeled by a continuous It\=o semi-martingale. The aim is to minimize both deviation from the target and tracking efforts. 
We establish the existence of  asymptotic lower bounds for this problem, depending on the cost structure. These lower bounds can be related to the time-average control of Brownian motion, which is characterized as a deterministic linear programming problem. A comprehensive list of examples with explicit expressions for the lower bounds is provided. 
\\\\
\noindent \textbf{Key words:}\ {Optimal tracking, asymptotic lower bound, occupation measure, linear programming, singular control, impulse control, regular control.}\\

\noindent \textbf{MSC2010:} 93E20

\end{abstract}

%%%%%%%%%%%%%%%%%%%%%%%%%%%%%%%%%%%%%%%%%%%%%%%%%%%%%%%%%%

%%%%%%%%%%%%%%%%%%%%%%%%%%%%%%%%%%%%%%%%%%%%%%%%%%%%%%%%%%
%\newpage
%\setcounter{tocdepth}{3}
%\tableofcontents
%\newpage

%%%%%%%%%%%%%%%%%%%%%%%%%%%%%%%%%%%%%%%%%%%%%%%%%%%%%%%%%%
\section{Introduction}
%%%%%%%%%%%%%%%%%%%%%%%%%%%%%%%%%%%%%%%%%%%%%%%%%%%%%%%%%%

We consider the problem of tracking a target whose dynamics $(X^\circ_t)$ is modeled by a continuous It\=o semi-martingale defined on a filtered probability space $(\Omega, \calF, (\calF_t)_{t\geq 0}, \bbP)$  with values in $\bbR^d$ such that
\begin{equation*}%\label{eqn: uncontrolled process}
dX^\circ_t =  b_t dt+ \sqrt{ a}_t dW_t,\quad X^\circ_0= 0.
\end{equation*}
Here, $(W_t)$ is a $d$-dimensional Brownian motion and $(b_t)$, $(a_t)$ are  predictable processes with values in $\bbR^d$ and $\calS^d_+$ the set of $d\times d$ symmetric positive definite matrices respectively. An agent observes $X^\circ_t$  and adjusts her position in order to follow $X^\circ_t$.  However, she has to pay certain intervention costs for position adjustments. The objective is to stay close to the target  $X^\circ_t$ while minimizing the tracking efforts. This problem arises naturally in various situations such as discretization of option hedging strategies \cite{fukasawa2011asymptotically, rosenbaum2011asymptotically, gobet2012almost}, management of an index fund \cite{pliska2004optimal,  korn1999some}, control of exchange rate \cite{mundaca1998optimal, cadenillas2000classical},  portfolio selection under transaction costs \cite{kallsen2013general, Soner2012, possamai2012homogenization, altarovici2013asymptotics}, trading under market impact \cite{bank2015hedging,moreau2014trading, liu2014rebalancing} or illiquidity costs \cite{naujokat2011curve,Rogers2007}. \\

More precisely, {\color{black}let $(Y^\psi_t)$ be the position of the agent determined by the control $\psi$, with $Y^\psi_t\in \mathbb R^d$} and let $(X_t)$ be  the deviation of the agent from the target $(X^\circ_t)$, so that
\begin{equation}\label{eqn: controlled process}
X_t = -X_t^\circ + Y^\psi_t.
\end{equation}
Let $H_0(X)$ be the penalty functional for the deviation from the target and $H(\psi)$  the cost incurred by the control $\psi$ up to a finite horizon $T$.  Then the problem of tracking can be formulated as 
\begin{equation}\label{eqn: cost psi}
\inf_{\psi \in \mathcal{A}} J(\psi), \quad  J(\psi)= H_0(X) + H(\psi),
\end{equation}
{\color{black}where $\mathcal{A}$ is the set of admissible strategies.}
As is usually done in the literature (see for example \cite{karatzas1983class, menaldi1992singular}), we consider a penalty $H_0(X)$ for the deviation from the target of additive  form
\begin{equation*}
H_0(X) = \int_0^T r_tD( X_t)dt,
\end{equation*}
where $(r_t)$ is a random weight process and $D(x)$ a determinstic function. For example, we can take $D(x) = \langle x, \Sigma^D x \rangle $ where $\Sigma^D$ is positive definite and $\langle\cdot, \cdot\rangle$ is the inner product in $\bbR^d$.
On the other hand, depending on the nature of  the costs, the agent can either control her speed at all times or jump towards the target instantaneously.  The control $\psi$ and the cost functional $H(\psi)$ belong to one of the following classes:

\begin{enumerate}
\item
\emph{Impulse Control.} There is a fixed cost component for each action, so the agent has to intervene in a discrete way. {\color{black}The class $\mathcal A$ of admissible controls contains all sequences $\{(\tau_j,\xi_j),j\in \bbN^*\}$ where $\{\tau_j,j\in \bbN^*\}$ is a strictly increasing sequence of stopping times representing the jump times and satisfying $\lim_{j\to \infty} \tau_j = +\infty$, and for each $j\in \bbN^*$, $\xi_j\in \bbR^d$ is a $\calF_{\tau_j}$-measurable random vector representing the size of $j$-th jump. The position of the agent is given by}
\begin{equation*}
Y_t = \sum_{0< \tau_j \leq t} \xi_j
\end{equation*}
and the cumulated cost is then given by
\begin{equation*}
H(\psi) = \sum_{0< \tau_j \leq T} k_{\tau_j}F( \xi_j), 
\end{equation*}
where $(k_t)$ is a random weight process and $F(\xi)>0$ is the cost of a jump with size $\xi\neq 0$. 
If we take $k_t= 1$ and $F(\xi)= \sum_{i=1}^d\ind{\xi^i\neq 0}$ where $\xi^i$ is the i-th component of $\xi$, then $H(\psi)$ represents the total number of actions on each component over the time interval $[0, T]$, see \cite{fukasawa2011asymptotically, gobet2012almost}. If $F(\xi) =\sum_{i=1}^d\ind{\xi^i\neq 0} + \sum_{i=1}^dP_i |\xi^i|$ where $P_i \geq 0$, we say that the cost has a fixed component and a proportional component. 
\item

\emph{Singular Control.} If the cost is proportional to the size of the jump, then infinitesimal displacement is also allowed and it is natural to model $(Y_t)$ by a process with bounded variation. {\color{black}In this case, the class $\mathcal A$ of admissible controls contains all couples $(\gamma,\varphi)$ where $\varphi$ is a progressively measurable increasing process with $\varphi_{0-}=0$, which represents the cumulated amount of intervention and $\gamma$ is a progressively measurable process with $\gamma_t \in \Delta:=\{n\in \bbR^d |\sum_{i=1}^d |n^i| = 1\}$ for all $t\geq 0$, which represents the distribution of the control effort in each direction. In other words, $\varphi_t=\sum_{i=1}^d \|Y^i\|_t$ where $\|\cdot\|$ denotes the absolute variation of a process, and $\gamma^i_t$ is the Radon-Nikodym derivative of $Y^i_t$ with respect to $\varphi_t$.  The position of the agent is given by}
\begin{equation*}
Y_t = \int_0^t \gamma_s d\varphi_s,
\end{equation*}
and the corresponding cost is usually given as (see for example \cite{kallsen2013general, Soner2012})
\begin{equation*}
H(\psi) = \int_0^T h_t P(\gamma_t) d\varphi_t,
\end{equation*}
where $(h_t)$ is a random  weight process and we take (for example) $P(\gamma) =\crochet{P, |\gamma|}$ with $P\in \bbR_+^d$ and $|\gamma|=(|\gamma^1|, \cdots, |\gamma^d|)^T$. The vector $P = (P_1, \cdots, P_d)^T$ represents the  coefficients of proportional costs in each direction. 
\item 
\emph{Regular Control.} Most often,  the process $(Y_t)$ is required to be absolutely continuous with respect to time, see for example \cite{Rogers2007, moreau2014trading} among many others. {\color{black}In this case, the class $\mathcal A$ of admissible controls contains all progressively measurable integrable processes $u$ with values in $\bbR^d$,  representing the speed of the agent, the position of the agent is given by
\begin{equation*}
Y_t = \int_0^t u_s ds,
\end{equation*}
and the cost functional is}
\begin{equation*}
H(\psi) = \int_0^T l_t Q( u_t) dt, 
\end{equation*}
where $(l_t)$ is a random weight process and, for example,  $Q(u)=  \langle u,  \Sigma^Q u\rangle $ with $\Sigma^Q$ a positive definite matrix. Comparing to the case of singular control where the control variables are $(\gamma_t)$ and $(\varphi_t)$, here we optimize over $(u_t)$. 
\item 
\emph{Combined control.} It is possible that several types of control are available to the agent. {\color{black}In that case, $\psi = (\psi_1,\dots,\psi_n)$ where for each $i$, $\psi_i$ belongs to one of the classes introduced before.} For example, in the case of combined regular and impulse control (see \cite{mundaca1998optimal}), {\color{black}the position of the agent is given by} 
\begin{equation*}
Y_t = \sum_{0< \tau_j \leq t} \xi_j  + \int_0^t u_s ds, 
\end{equation*}
while the cost functional is given by
\begin{equation*}
H(\psi) =  \sum_{0< \tau_j \leq T} k_{\tau_j}F( \xi_j) + \int_0^T l_t Q(u_t) dt. 
\end{equation*}
Similarly, one can consider other combinations of controls. \\
\end{enumerate}

The problem \eqref{eqn: controlled process}-\eqref{eqn: cost psi} rarely admits an  explicit solution. In this paper, we propose an asymptotic framework where the tracking costs are small and derive a lower bound for \eqref{eqn: controlled process}-\eqref{eqn: cost psi} under this setting. {\color{black} More precisely
%, let $\beta_\psi \in \bbR_+^*$ be a fixed real number (which will be suitably chosen according to the different cases). 
we introduce a parameter $\varepsilon$ tending to zero and consider a family of cost functionals $H^\varepsilon(\psi)$. For example, we can have $H^\varepsilon(\psi) = \varepsilon^{\beta_\psi}H(\psi)$ for some constant $\beta_\psi$, but different components of the cost functional may also scale with $\varepsilon$ at different rates. We define the control problem
\begin{equation}\label{eqn: controlled process eps}\tag{\ref{eqn: controlled process}-$\varepsilon$}
X^\varepsilon_t = -X^\circ_t + Y^{\psi^\varepsilon}_t,
\end{equation}
and objective function 
\begin{equation}\label{eqn: cost psi eps} \tag{\ref{eqn: cost psi}-$\varepsilon$}
\inf_{\psi^\epsilon\in \mathcal{A}} J^\varepsilon(\psi^\varepsilon), \quad J^\varepsilon(\psi^\varepsilon) = H_0(X^\varepsilon) + H^\varepsilon(\psi^\varepsilon).
\end{equation}
Moreover, we assume that the functions $D$, $Q$, $F$, $P$ possess a homogeneity property.}\\

The main result of this paper is a precise asymptotic relation between $J^\varepsilon$ and the time-average control problem of Brownian motion with constant parameters, {\color{black}in a variety of settings.} Let us give a flavor of the main result in the case of combined regular and impulse control (note that situations involving singular control are considered in Section \ref{sec: extensions part I}).
In this case, the dynamics of the controlled Brownian motion is given by 
\begin{equation}\label{eqn: local controlled process}\tag{\ref{eqn: controlled process}-local}
X_s = \sqrt{a} W_s + \int_0^s u_\nu d\nu+ \sum_{0 < \tau_j \leq s} \xi_j,
%+ \int_0^s \gamma_\nu d\varphi_\nu.
\end{equation}
and the time-average control problem can be formulated as
\begin{equation}\label{eqn: local cost}\tag{\ref{eqn: cost psi}-local}
\widetilde I(a,r,l,k) = \inf_{(\tau_j, \xi_j,u)} \underset{S\to \infty}{\overline{\text{lim}}} \frac{1}{S} \mathbb{E}\Big[\int_0^S  \big(rD(X_s) + lQ(u_s)\big) ds + k\!\!\!\sum_{0 < \tau_j\leq S } F(\xi_j)  %+ h\int_0^S P(\gamma_s) d\varphi_s
\Big].
\end{equation}

{\color{black}At the level of generality that we are interested in, we need to consider a relaxed formulation of the above control problem, as a linear programming problem on the space of measures.
% {\color{black}By convention, if one of the parameters $l$, $k$, $h$ takes the value infinity, the associated control is absent. 
Following \cite{kurtz2001stationary}, we introduce the occupation measures
\begin{align*}
\mu_t(H_1) &= \frac{1}{t}\mathbb E \int_0^t \mathbbm{1}_{H_1}(X_s, u_s) ds,\quad H_1 \in \mathcal B(\mathbb R^d \times \mathbb R^d),\\
\rho_t(H_2) &= \frac{1}{t} \mathbb E \sum_{0<\tau_j \leq t} \mathbbm{1}_{H_2}(X_{s-},\xi_j),\quad H_2 \in \mathcal B(\mathbb R^d \times \mathbb R^d).%\\
%\rho^2_t(H) &= \frac{1}{t} \mathbb E \int_0^t \mathbbm{1}_{H} (X_{s-}, \gamma_s, \Delta \varphi_s) d\varphi_s, \quad H \in \mathcal B(\mathbb R^d \times \Delta \times \mathbb R_+). 
\end{align*}
If the process $X$ and the controls are stationary, these measures do not depend on time and therefore
\begin{multline*}
\underset{S\to \infty}{\overline{\text{lim}}} \frac{1}{S} \mathbb{E}\Big[\int_0^S  \big(rD(X_s) + lQ(u_s)\big) ds + k\!\!\!\sum_{0 < \tau_j\leq S } F(\xi_j)  
%+ h\int_0^S P(\gamma_s) d\varphi_s
\Big] \\ = \int_{\mathbb R^d \times \mathbb R^d} (rD(x) + lQ(u))  \mu(dx\times du) + \int_{\mathbb R^d \times \mathbb R^d} k F(\xi) \rho (dx \times d\xi). %+ \int_{\mathbb R^d \times \Delta \times \mathbb R_+} hP(\gamma) \rho^2(dx\times d\gamma \times d\delta). 
\end{multline*}
On the other hand, by It\=o's formula, for any $f\in C^2_0$, 
\begin{align}
f(X_t)  = f(X_0) + \sqrt{a}\int_0^t f'(X_s)  dW_s + \int_0^t Af (X_s,u_s) ds +  \sum_{0<\tau_j \leq t} B f(X_{\tau_j-},\xi_j) %+ \int_0^t B^2 (X_{s-} \gamma_s, \Delta \varphi_s) d\varphi_S
,\label{ito.eq}
\end{align}
where 
\begin{align*}
Af(x, u)& = \frac{1}{2}\sum_{i, j}a_{ij}\partial^2_{ij}f(x) + \langle u, \nabla f(x)\rangle, \\
B f(x,\xi) &= f(x+\xi) - f(x).
%\\B^2 f(x, \gamma, \delta) &=
%\begin{cases}
%\langle\gamma, \nabla f(x)\rangle, \quad & \delta =0\\
%\delta^{-1}\big(f(x+ \delta\gamma) - f(x)\big), \quad &\delta >0.
%\end{cases}
\end{align*}
Taking expectation in \eqref{ito.eq} and assuming once again the stationarity of controls, we  see that under adequate integrability conditions the measures $\mu$ and $\rho$ satisfy the constraint
\begin{align}
\int_{\mathbb R^d \times \mathbb R^d} A f(x,u) \mu(dx \times du) + \int_{\mathbb R^d \times \mathbb R^d} B f(x,\xi) \rho(dx\times d\xi)  %\\+ \int_{\mathbb R^d \times \Delta \times \mathbb R_+} B^2(x,\gamma, \delta) \rho^2(dx\times d\gamma \times d\delta) 
= 0.\label{cons.eq}
\end{align}
Therefore, the time-average control problem of Brownian motion \eqref{eqn: local controlled process}-\eqref{eqn: local cost} is closely related to the problem of computing 
\begin{align}
I(a,r,l,k) = \inf_{\mu,\rho} \int_{\mathbb R^d \times \mathbb R^d} (rD(x) + lQ(u))  \mu(dx\times du) + \int_{\mathbb R^d \times \mathbb R^d} k F(\xi) \rho (dx \times d\xi) %\\+ \int_{\mathbb R^d \times \Delta \times \mathbb R_+} hP(\gamma) \rho^2(dx\times d\gamma \times d\delta)
,\label{lp.eq}
\end{align}
where $\mu$ is a probability measure and $\rho$ is a finite positive measure satisfying the constraint \eqref{cons.eq}. In Section \ref{sec: explicit I}  we shall see that this characterization is essentially equivalent to \eqref{eqn: local controlled process}-\eqref{eqn: local cost} if we formulate the optimal control problem for the Brownian motion as a controlled martingale problem. In the considered case of combined regular and impulse control, our main result is the following. \\

\noindent 
{\bf Main result, combined regular and impulse control. }{\it There exists $\beta^*>0$ explicitly determined by the cost structure $H_0$ and $(H^\varepsilon)_{\varepsilon> 0}$ such that for all $\delta >0$ and any sequence of admissible strategies $\{\psi^\varepsilon \in \mathcal{A}, \varepsilon >0\}$, we have
\begin{equation}\label{eqn: lower bound}
\lim_{\varepsilon\to 0+}\bbP\Big[\frac{1}{\varepsilon^{\beta^*}} J^\varepsilon(\psi^\varepsilon)\geq \int_0^T I_t dt-\delta\Big] = 1,
\end{equation}
where $I_t = I(a_t, r_t, k_t, l_t) $ is the optimal cost of the linear programming formulation \eqref{lp.eq} of the time-average control problem of Brownian motion \eqref{eqn: local controlled process}-\eqref{eqn: local cost} with parameters frozen at time $t$.}\\
%
%\noindent
%For example, in the case of impulse control, we have (loosely speaking) 
%\begin{equation}\label{eqn: local cost}\tag{\ref{eqn: cost psi}-$t$}
%I_{\textcolor{red}{t}} =\inf_{\{(\tau_j, \xi_j)\}}{\limsup_{S\to \infty}}\frac{1}{S}\bbE[r_{\textcolor{red}{t}}\int_0^S D( X_s)ds + k^\circ_{\textcolor{red}{t}}\sum_{0< \tau_j \leq S} F( \xi_j)],
%\end{equation}
%where
%\begin{equation}\label{eqn: local controlled process}\tag{\ref{eqn: controlled process}-$t$}
%X_s = \sqrt{a}_{\textcolor{red}{t}} W_s + \sum_{0 \leq \tau_j \leq s }\xi_j,
%\end{equation}
%and all the parameters are frozen at time $t$. 

\noindent
 The original problem \eqref{eqn: controlled process}-\eqref{eqn: cost psi} is therefore simplified in the sense that the local problem \eqref{eqn: local controlled process}-\eqref{eqn: local cost} is easier to analyze since the dynamics of the target is reduced to that of a Brownian motion and the cost parameters become constant.  In many practically important cases (see Examples \ref{ex: stochastic}-\ref{ex: combined singular}), we are able to solve explicitly \eqref{eqn: local controlled process}-\eqref{eqn: local cost} and show that the two formulations of the time-average control problem are equivalent and therefore $I = \widetilde I$.  Moreover, in a forthcoming paper, we show that for the examples where \eqref{eqn: local controlled process}-\eqref{eqn: local cost} admits an explicit solution, the lower bound is tight (see Remark \ref{upb}).} \\

Our result enables us to revisit the asymptotic lower bounds for the discretization of hedging strategies in \cite{fukasawa2011asymptotically, gobet2012almost}. In these papers, the lower bounds are deduced by using subtle inequalities. Here we show that these bounds can be interpreted in a simple manner through the time-average control problem of Brownian motion. \\

The local control problem \eqref{eqn: local controlled process}-\eqref{eqn: local cost} also arises in the study of utility maximization under transaction costs, see \cite{Soner2012, possamai2012homogenization, altarovici2013asymptotics, moreau2014trading}. This is not surprising since at first order, these problems and the tracking problem are essentially the same, see Section \ref{sec: util}. In the above references, the authors derive the PDE associated to the first order correction of the value function, which turns out to be the HJB equation associated to the time-average control of Brownian motion. Inspired by \cite{kurtz1999martingale} and \cite{kushner1993limit}, our approach, based on weak convergence of empirical occupation measures, is very different from the PDE-based method and enables us to treat more general situations. Contrary to \cite{kurtz1999martingale}, where the lower bound holds under expectation, we obtain pathwise lower bounds. Compared to \cite{kushner1993limit}, we are able to treat impulse control and general dynamics for the target.\\

The  paper is organized as follows. In Section \ref{sec: asymp framework}, we introduce our asymptotic framework and establish heuristically the lower bound for the case of combined regular and impulse control. Various extensions are then discussed in Section \ref{sec: extensions part I}. In Section \ref{sec: explicit I}, we provide an accurate definition for the time-average control of Brownian motion using a relaxed martingale formulation and collect a comprehensive list of explicit solutions in dimension one. The connection with utility maximization with small market frictions is made in Section \ref{sec: util} and the proofs are given in Sections \ref{sec: proof}, \ref{sec: proof singular} and \ref{sec: verification}.\\

\emph{Notation.} For a complete, separable, metric space $S$, we define $C(S)$  the
set of continuous functions on $S$, $C_b(S)$  the set of bounded, continuous functions on
$S$, $\scrM(S)$ the set of finite nonnegative Borel measures on $S$ and $\scrP(S)$  the set of probability measures on $S$. The sets $\scrM(S)$ and $\scrP(S)$ are equipped with the topology of weak convergence.
We define $\calL(S)$ the set of nonnegative Borel measures $\Gamma$ on $S \times [0, \infty)$ such that $\Gamma(S \times [0, t]) < \infty$. %, for any $t$
%, and topologized so that $\xi_n \to \xi$ if and only if $\int f d\xi_n \to f d\xi$, for every $f \in C(S \times [0, \infty))$ with $supp(f ) \subset S \times [0, t_f ] $ for some $t_f < \infty$.
  Denote $\Gamma_t$  the restriction of $\Gamma$ to $S\times [0, t]$.  %For $\xi_n, \xi\in \mathcal{L}(S)$, we say that $\xi_n \to \xi$ if and only if $\int f d\xi_n \to \int f d\xi$, for every $f \in C(S \times [0, \infty))$ with $supp(f ) \subset S \times [0, t_f ] $ for some $t_f < \infty$.\\ 
  %We define $\calL^{(m)}(S) \subset \calL(S)$ be the set of $\xi$ such that $\xi(S\times [0, t]) = t, \forall t>0$. 
 We use $\bbR^d_x$,  $\bbR^d_u$ and  $\bbR^d_\xi$ to indicate the state space corresponding to the variables $x$, $u$ and $\xi$. Finally, $C^2_0(\bbR^d)$ denotes the set of twice differentiable real functions on $\bbR^d$ with compact support, equipped with the norm
\begin{equation*}
\|f\|_{C^2_0}= \|f\|_{\infty} + \sum_{i=1}^d \|\partial_if\| _{\infty}+ \sum_{i,j=1}^d\|\partial^2_{ij}f\|_{\infty}.
\end{equation*}
%The paper is organized as follows.

%%%%%%%%%%%%%%%%%%%%%%%%%%%%%%%%%%%%%%%%%%%%%%%%%%%%%%%%%%
%\newpage

\section{Tracking with combined regular and impulse control}\label{sec: asymp framework}
%%%%%%%%%%%%%%%%%%%%%%%%%%%%%%%%%%%%%%%%%%%%%%%%%%%%%%%%%%	
Instead of giving directly a general result, which would lead to a cumbersome presentation, we focus in this section on the tracking problem with combined regular and impulse control. This allows us to illustrate our key ideas. Other situations, such as singular control,  are discussed in Section \ref{sec: extensions part I}.  \\

%Recall that the  target $X^\circ$ is a continuous Ito semi-martingale of the form 
%\begin{equation*}
%dX^\circ_t =  b_t dt+ \sqrt{ a}_t dW_t,\quad X^\circ_0= 0,
%\end{equation*}
%defined on the filtered space $(\Omega, \calF, (\calF_t)_{t\geq 0}, \bbP)$, where $(b_t)$ and $(a_t)$ are predictable processes  with values in $\bbR^d$ and $\calS^d_+$ and $(W_t)$ a $d$-dimensional Brownian motion. 

In the case of combined regular and impulse control,  a tracking strategy $(u, \tau, \xi)$ is given by a progressively measurable process $u=(u_t)_{t\geq 0}$ with values in $\bbR^d$ and  $(\tau, \xi) =\{(\tau_j, \xi_j),  j\in \bbN^*\}$, with $(\tau_j)$ an increasing sequence  of stopping times and $(\xi_j)$  a sequence of $\calF_{\tau_j}$-measurable random variables with values in $\bbR^d$. The process $(u_t)$ represents the speed of the agent. The stopping time $\tau_j$ represents the timing of j-th jump towards the target and $\xi_j$ the size of the jump. The tracking error obtained  by following the strategy $(u, \tau, \xi)$  is given by 
\begin{equation*}
X_t = -X_t^\circ + \int_0^t u_s ds + \sum_{0<\tau_j \leq t}\xi_j.
\end{equation*}

\noindent At any time the agent is paying a cost for maintaining the speed $u_t$ and each jump $\xi_j$  incurs a  positive cost. We are interested in the following type of cost functional
\begin{equation*}
J(u, \tau, \xi) = \int_0^T\big( r_t D(X_t) + l^\circ_t Q(u_t)\big) dt + \sum_{j: 0<\tau_j\leq T}\big(k^\circ_{\tau_j}F(\xi_j) + h^\circ_{\tau_j}P(\xi_j)\big),	
\end{equation*}
where $T\in   \bbR^+$ and $(r_t)$, $(l^\circ_t)$, $(k^\circ_t)$ and $(h^\circ_t)$ are random weight processes. The cost functions $D$, $Q$, $F$, $P$ are deterministic functions which satisfy the following homogeneity property
\begin{equation}\label{eqn: homogeneous cost}
D(\varepsilon x) = \varepsilon^{\zeta_D}D(x), \quad Q(\varepsilon u) = \varepsilon^{\zeta_Q}Q(u), \quad F(\varepsilon \xi) = \varepsilon^{\zeta_F}F(\xi), \quad P(\varepsilon \xi) = \varepsilon^{\zeta_P} P(\xi),
\end{equation}
for any $\varepsilon > 0$ and 
\begin{equation*}
\zeta_D >0,\quad \zeta_Q > 1, \quad\zeta_F=0, \quad\zeta_P = 1.
\end{equation*}
Note that here we slightly extend the setting of the previous section by introducing two functions $F$ and $P$ which typically represent the fixed and the proportional costs respectively. \\

In this paper, we  essentially have in mind the case where
\begin{equation*}
D(x) = \langle x,  \Sigma^D x \rangle , \quad Q(u) = \langle u, \Sigma^Q u\rangle , \quad F(\xi) = \sum_{i=1}^d F_i \ind{\xi^i \neq 0}, \quad P(\xi) = \sum_{i=1}^d P_i |\xi^i|,
\end{equation*}
with $F_i, P_i \in  \mathbb{R}_+$ such that
\begin{equation}\label{eqn: F i pos}
\min_i F_i >0,
\end{equation}
 and $\Sigma^D, \Sigma^Q\in \calS^d_+$. Note that in this situation, we have $\zeta_D=\zeta_Q = 2$.  

%\begin{rmk}
%Here $\zeta_Q>1$ is important since we want to make sure that the optimal control of $u$ doesn't reduce to singular control, see also \citep[Erratum]{kurtz1998existence}. 
%\end{rmk}

\subsection{Asymptotic framework} \label{sec: asym framework}
We now explain our asymptotic setting where the costs are small and provide a heuristic proof of our main result. We assume that there exist  $\varepsilon > 0$ and  $\beta_Q, \beta_F, \beta_P>0$ such that
\begin{equation}\label{eqn: scaled param}
l^\circ_t = \varepsilon^{\beta_Q} l_t, \quad k^\circ_t = \varepsilon^{\beta_F} k_t, \quad h^\circ_t =\varepsilon^{\beta_P}h_t.
\end{equation}
Then the asymptotic framework consists in considering the sequence of optimization problems indexed by $\varepsilon\to0$ 
\begin{equation*}
\inf_{(u^\varepsilon, \tau^\varepsilon, \xi^\varepsilon)\in \mathcal{A}}J^\varepsilon(u^\varepsilon, \tau^\varepsilon, \xi^\varepsilon), 
\end{equation*}
with
\begin{equation*}
J^\varepsilon(u^\varepsilon, \tau^\varepsilon, \xi^\varepsilon)  = \int_0^T \big(r_t D(X_t^\varepsilon) + \varepsilon^{\beta_Q}l_t Q(u^\varepsilon_t)\big) dt + \sum_{j: 0 <\tau^\varepsilon_j \leq T}\big( \varepsilon^{\beta_F}k_{\tau^\varepsilon_j}F(\xi_j^\varepsilon) + \varepsilon^{\beta_P}h_{\tau^\varepsilon_j}P(\xi_j^\varepsilon)\big),	
\end{equation*}
and
\begin{equation*}
X_t^\varepsilon = -X_t^\circ +\int_0^t u^\varepsilon_s ds +  \sum_{j:0<\tau^\varepsilon_j \leq t}\xi_j^\varepsilon.
\end{equation*}

\noindent The key observation is that under such setting, the tracking problem  can be decomposed into a sequence of local problems. More precisely, 
let $\{ t^\varepsilon_k = k\delta^\varepsilon, k=0, 1, \cdots, K^\varepsilon\}$ be a partition of the interval $[0, T]$ with $\delta^\varepsilon\to 0$ as $\varepsilon \to 0$.
Then we can write
\begin{align*}
J^\varepsilon(u^\varepsilon, \tau^\varepsilon, \xi^\varepsilon) 
%&= \int_0^T (r_t D(X_t^\varepsilon) + \varepsilon^{\beta_Q}l_t Q(u^\varepsilon_t)) dt + \sum_{j: 0 <\tau^\varepsilon_j \leq T}( \varepsilon^{\beta_F}k_{\tau^\varepsilon_j}F(\xi_j^\varepsilon) + \varepsilon^{\beta_P}h_{\tau^\varepsilon_j}P(\xi_j^\varepsilon))\\
&= \sum_{k=0}^{K^\varepsilon-1}\Big( \int_{t^\varepsilon_k}^{t^\varepsilon_k+\delta^\varepsilon} \big(r_t D(X_t^\varepsilon) + \varepsilon^{\beta_Q}l_t Q(u^\varepsilon_t)\big) dt \\
&\quad\qquad\qquad + \sum_{j: t^\varepsilon_k <\tau^\varepsilon_j \leq t^\varepsilon_k+\delta^\varepsilon}\big( \varepsilon^{\beta_F}k_{\tau^\varepsilon_j}F(\xi_j^\varepsilon) + \varepsilon^{\beta_P}h_{\tau^\varepsilon_j}P(\xi_j^\varepsilon)\big)\Big)\\
&= \sum_{k=0}^{K^\varepsilon-1}\frac{1}{\delta^\varepsilon}\Big( \int_{t^\varepsilon_k}^{t^\varepsilon_k+\delta^\varepsilon} \big(r_t D(X_t^\varepsilon) + \varepsilon^{\beta_Q}l_t Q(u^\varepsilon_t)\big) dt\\
&\qquad\qquad\qquad  +\sum_{j: t^\varepsilon_k <\tau^\varepsilon_j \leq t^\varepsilon_k+\delta^\varepsilon}\big( \varepsilon^{\beta_F}k_{\tau^\varepsilon_j}F(\xi_j^\varepsilon) + \varepsilon^{\beta_P}h_{\tau^\varepsilon_j}P(\xi_j^\varepsilon)\big)\Big)(t^\varepsilon_{k+1} - t^\varepsilon_k)\\
&=\sum_{k=0}^{K^\varepsilon -1} j^\varepsilon_{t^\varepsilon_k}(t^\varepsilon_{k+1} -t^\varepsilon_k),
\end{align*}
with
\begin{equation*}
j^\varepsilon_{t^\varepsilon_k}=\frac{1}{\delta^\varepsilon}\Big( \int_{t^\varepsilon_k}^{t^\varepsilon_k+\delta^\varepsilon} \big(r_t D(X_t^\varepsilon) + \varepsilon^{\beta_Q}l_t Q(u^\varepsilon_t)\big) dt+\sum_{j: t^\varepsilon_k <\tau^\varepsilon_j \leq t^\varepsilon_k+\delta^\varepsilon}\big( \varepsilon^{\beta_F}k_{\tau^\varepsilon_j}F(\xi_j^\varepsilon) + \varepsilon^{\beta_P}h_{\tau^\varepsilon_j}P(\xi_j^\varepsilon)\big)\Big).
\end{equation*}
 As $\varepsilon$ tends to zero, we approximately have
\begin{equation*}
J^\varepsilon(u^\varepsilon, \tau^\varepsilon, \xi^\varepsilon)  \simeq \int_0^T j^\varepsilon_t dt.
\end{equation*}

\noindent We are hence led to study $j^\varepsilon_t$ as $\varepsilon\to 0$, which is closely related to the time-average control problem of Brownian motion. To see this, consider the following rescaling of $X^\varepsilon$ over the horizon $(t, t+\delta^\varepsilon]$:
\begin{equation*}
\wt{X}^{\varepsilon, t}_s = \frac{1}{\varepsilon^{\beta}}X^\varepsilon_{t+ \varepsilon^{\alpha\beta}s}, \quad s\in (0, T^\varepsilon],
\end{equation*}
where $T^\varepsilon= \varepsilon^{-\alpha\beta}\delta^{\varepsilon}$, $\alpha = 2$ and $\beta >0$ is to be determined (here $\alpha=2$ is related to the scaling property of Brownian motion).  We use the superscript $t$ to indicate that the scaled systems correspond to the horizon $(t, t+\delta^\varepsilon]$. Then the dynamics of $\wt{X}^{\varepsilon, t}$ is given by, see \cite[Proposition V.1.5]{Revuz1999},
\begin{equation*}\label{pb: local dynamics integral}
\wt{X}^{\varepsilon, t}_s =\wt{X}^{\varepsilon, t}_0+ \int_0^s \wt{b}^{\varepsilon, t}_\nu d\nu + \int_0^s\sqrt{\wt{a}^{\varepsilon, t}_\nu}d\wt{W}^{\varepsilon, t}_\nu + \int_0^s\wt{u}^{\varepsilon, t}_\nu d\nu+ \sum_{0< \wt{\tau}^{\varepsilon, t}_j \leq s}\wt{\xi}^{\varepsilon}_j,
\end{equation*}
with
$$
\wt{b}^{\varepsilon, t}_s = -\varepsilon^{(\alpha-1)\beta} b_{t+ \varepsilon^{\alpha\beta}s},\quad \wt{a}^{\varepsilon, t}_s = a_{t + \varepsilon^{\alpha\beta}s},\quad 
\wt{W}^{\varepsilon, t}_s = -\frac{1}{\varepsilon^\beta}(W_{t + \varepsilon^{\alpha \beta}s}-W_t),$$
and 
\begin{equation*} \wt{u}^{\varepsilon, t}_s = \varepsilon^{(\alpha-1)\beta} u^\varepsilon_{t+ \varepsilon^{\alpha \beta}s},  \quad \wt{\xi}^{\varepsilon}_j= \frac{1}{\varepsilon^{\beta}}\xi^\varepsilon_j, \quad \wt{\tau}^{\varepsilon, t}_j = \frac{1}{\varepsilon^{\alpha \beta}}(\tau^\varepsilon_j-t)\vee 0.
\end{equation*}
Note that $(\wt{W}^{\varepsilon, t}_s)$ a Brownian motion with respect to $\wt{\calF}^{\varepsilon, t}_s = \calF_{t + \varepsilon^{\alpha\beta}s}$.
Abusing notation slightly, we write
\begin{equation}\label{pb: local dynamics}
d\wt{X}^{\varepsilon, t}_s = \wt{b}^{\varepsilon, t}_s ds + \sqrt{\wt{a}^{\varepsilon, t}_s}d\wt{W}^{\varepsilon, t}_s + \wt{u}^{\varepsilon, t}_sds+ d(\sum_{0< \wt{\tau}^{\varepsilon, t}_j \leq s}\wt{\xi}^{\varepsilon}_j), \quad s\in (0, T^\varepsilon].
\end{equation}
Using the homogeneity properties \eqref{eqn: homogeneous cost} of the cost functions, we obtain 
\begin{align*}
j^\varepsilon_t 
&= \frac{1}{T^\varepsilon}\Big(
\int_0^{T^\varepsilon}\big( \varepsilon^{\beta \zeta_D}r_{t+\varepsilon^{\alpha\beta}s} D(\wt{X}^{\varepsilon,t}_s) + \varepsilon^{\beta_Q - (\alpha-1)\zeta_Q\beta}l_{t+\varepsilon^{\alpha\beta}s} Q(\wt{u}^{\varepsilon, t}_s)\big)ds\\
&\qquad\qquad + \sum_{0<\wt{\tau}^{\varepsilon,t}_j\leq T^\varepsilon }\big(\varepsilon^{\beta_F-(\alpha-\zeta_F)\beta} k_{t + \varepsilon^{\alpha\beta}\wt{\tau}^{\varepsilon,t}_j}F(\wt{\xi}^\varepsilon_j)+ \varepsilon^{\beta_P-(\alpha-\zeta_P)\beta} h_{t + \varepsilon^{\alpha\beta}\wt{\tau}^{\varepsilon,t}_j}P(\wt{\xi}^\varepsilon_j)\big)\Big)\\
&\simeq \frac{1}{T^\varepsilon}\Big(
\int_0^{T^\varepsilon}\big( \varepsilon^{\beta \zeta_D}r_{t} D(\wt{X}^{\varepsilon,t}_s) + \varepsilon^{\beta_Q - (\alpha-1)\zeta_Q\beta}l_t Q(\wt{u}^{\varepsilon, t}_s)\big)ds\\
&\qquad\qquad + \sum_{0<\wt{\tau}^{\varepsilon,t}_j\leq T^\varepsilon }\big(\varepsilon^{\beta_F-(\alpha-\zeta_F)\beta} k_{t}F(\wt{\xi}^\varepsilon_j)+ \varepsilon^{\beta_P-(\alpha-\zeta_P)\beta} h_{t}P(\wt{\xi}^\varepsilon_j)\big)\Big).
\end{align*}
The second approximation can be justified by the continuity of the cost coefficients $r_t$, $l_t$, 
$k_t$ and $h_t$.\\

Now, if there exists $\beta>0$ such that
\begin{equation*}
\beta \zeta_D = \beta_Q - (\alpha-1) \zeta_Q \beta = \beta_F - (\alpha-\zeta_F)\beta = \beta_P -  (\alpha-\zeta_P) \beta, 
\end{equation*}
that is,
\begin{equation}\label{eqn: order beta}
\beta = \frac{\beta_F}{\zeta_D + \alpha - \zeta_F} = \frac{\beta_P}{\zeta_D + \alpha - \zeta_P}=\frac{\beta_Q}{\zeta_D + (\alpha-1)\zeta_Q}, 
\end{equation}
where $ \alpha=2$, then we have
\begin{equation*}
j_t^\varepsilon \simeq \varepsilon^{\beta \zeta_D}I^\varepsilon_t,
\end{equation*}
with
\begin{equation}\label{eqn: bm cost eps}
{I}^\varepsilon_t =\frac{1}{T^\varepsilon}\Big(
\int_0^{T^\varepsilon}\big(r_{t} D(\wt{X}^{\varepsilon,t}_s) +l_t Q(\wt{u}^{\varepsilon, t}_s)\big)ds+ \sum_{0<\wt{\tau}^{\varepsilon,t}_j\leq T^\varepsilon }\big(k_{t}F(\wt{\xi}^\varepsilon_j)+  h_{t}P(\wt{\xi}^\varepsilon_j)\big)\Big).
\end{equation}
By suitably choosing $\delta^\varepsilon$, we have
\begin{equation*}
\delta^\varepsilon \to 0, \quad T^\varepsilon \to \infty. 
\end{equation*}
It follows that $\wt{b}^{\varepsilon, t}_s\simeq 0$ and $\wt{a}^{\varepsilon, t}_s\simeq a_t$ for $s\in (0, T^\varepsilon]$. Therefore, the dynamics of \eqref{pb: local dynamics} is approximately a controlled Brownian motion with diffusion matrix $a_t$. We deduce that 
\begin{equation}\label{eqn: lower bound I eps}
I^\varepsilon_t \gtrsim I(a_t, r_t, l_t, k_t, h_t),
\end{equation}
 where the term in the right-hand side is defined by 
 \begin{equation}\label{eqn: ta cost const pathwise}
I(a, r, l, k, h)=\inf_{(u, \tau, \xi)}{\limsup_{S\to \infty}}\frac{1}{S}\Big[\int_0^S \big(rD(X_s) + l Q( u_s)\big) ds + \sum_{0\leq \tau_j \leq S} \big(k F(\xi_j) + hP(\xi_j)\big)\Big],
\end{equation}
with
\begin{equation}\label{eqn: ta dynm const}
{\color{black}X_s = \sqrt{a} W_s  + \int_0^s u_rdr + \sum_{0 \leq \tau_j \leq s }\xi_j .}
\end{equation}
Therefore, we obtain that as $\varepsilon \to 0$:
\begin{equation*}
J^\varepsilon \simeq \int_0^T j^\varepsilon_t dt  \simeq  \varepsilon^{\beta \zeta_D} \int_0^T I^\varepsilon_t dt \gtrsim  \varepsilon^{\beta \zeta_D}\int_0^T I_t dt.
\end{equation*}

\noindent Then we may expect that \eqref{eqn: ta cost const pathwise} is equal to the following expected cost criterion
 \begin{equation}\label{eqn: ta cost const}
I(a, r, l, k, h)=\inf_{(u, \tau, \xi)}{\limsup_{S\to \infty}}\frac{1}{S}\bbE\Big[\int_0^S \big(rD(X_s) + l Q( u_s)\big) ds + \sum_{0\leq \tau_j \leq S} \big(k F(\xi_j) + hP(\xi_j)\big)\Big],
\end{equation}
see for example \cite{borkar1988ergodic, jack2006impulse, jack2006singular}. 
Therefore, we will use the latter version to characterize the lower bound since it is easier to manipulate.

%One of the main contributions of this paper consists of a rigourous formulation of above argument using a linear programming charaterization of $I$, which we detail in the following. 

\begin{rmk}\label{rmk: method of proof}
The approach of weak convergence is classical for proving inequalities similar to \eqref{eqn: lower bound I eps}, in particular in the study of heavy traffic networks (see \cite[section 9]{kushner2014partial} for an overview). The usual weak convergence theorems enable one to show that the perturbed system converges in the Skorohod topology to the controlled Brownian motion as $\varepsilon$ tends to zero. However, since the time horizon tends to infinity, this does not immediately imply the convergence of time-average cost functionals like $I_t^{\varepsilon}$.\\

In \cite{kushner1993limit}, the authors consider pathwise average cost problems for controlled queues in the heavy traffic limit, where the control term is absolutely continuous. They use the empirical ``{functional} occupation measure'' on the canonical path space and characterize the limit as a controlled Brownian motion. The same method has also been used in \cite{budhiraja2011ergodic} in the study of single class queueing networks.\\

However, this approach cannot be applied directly to singular/impulse controls for which the tightness of the occupation measures is difficult to establish. In fact, the usual Skorokhod topology is not suitable for the impulse control term 
$$\wt{Y}^\varepsilon_t:= \sum_{0< \wt{\tau}^\varepsilon_j\leq t} \wt{\xi}^\varepsilon_j .$$
Indeed, in the case of singular/impulse control, the component $\{\wt{Y}^\varepsilon\}$ is generally not tight under the Skorokhod topology. For example (see \cite[p.72]{kushner2001heavy}), consider the family $(Y_t^\varepsilon)$ where the function $Y_t^\varepsilon$  equals zero for $t<1$ and jumps upward by an amount $\sqrt{\varepsilon}$ at times $1 + i\varepsilon$,  $i=0, 1,\ldots,\varepsilon^{-1/2}$ until it reaches the value unity. The natural limit of $Y^\varepsilon$ is of course $\ind{t\geq 1}$ but this sequence is not tight in the Skorokhod topology. The nature of this convergence is discussed in \cite{kurtz1991random} and a corresponding topology is provided in \cite{jakubowski1997non}. \\

This difficulty could be avoided by introducing a random time change after which the {\color{black}(suitably interpolated)} control term becomes uniformly Lipschitz and hence converges under the Skorokhod topology. This technique is used in \cite{Budhiraja2006,budhiraja2012controlled,kushner2001heavy} to study the convergence of controlled queues with discounted costs.  This seems to be a possible alternative way to extend the approach of \cite{kushner1993limit} to singular/impulse controls. Nevertheless, the analysis would probably be quite involved.\\

Instead of proving the tightness of control terms $(\wt{Y}^{\varepsilon}_t)$ in weaker topologies, we shall use an alternative characterization of the time-average control problem of Brownian motion. In \cite{kurtz1999martingale}, the authors characterize the time-average control of a Jackson network in the heavy traffic limit as the solution of a linear program. The use of occupation measure on the state space instead of the path space turns out to be sufficient to describe the limiting stochastic control problem. However, the optimization criterion is not pathwise. \\

In this paper, we use a combination of the techniques in \cite{kurtz1999martingale} and \cite{kushner1993limit} to obtain pathwise lower bounds. 
\end{rmk}

\subsection{Lower bound}

In order to properly state our result for the case of combined regular and impulse control, we first introduce the solution $I= I(a, r,l, k, h) $ of the following linear programming problem:

\begin{equation}\label{LP: cost part I}
I(a, r, l, k, h) = \inf_{(\mu, \rho)} \int_{\bbR^d_x \times \bbR^d_u}\!\big(r D(x) + l Q(u) \big)\mu(dx \times du) + \int_{\bbR^d_x\times \bbR^d_\xi \setminus\{0_\xi\}}\!\big(kF(\xi) + h P(\xi)\big)\rho(dx\times d\xi),
\end{equation}
with $(\mu, \rho)\in\scrP(\bbR^d_x\times \bbR^d_u)\times \scrM(\bbR^d_x\times \bbR^d_\xi \setminus\{0_\xi\})$ satisfying the following constraint
\begin{equation}\label{LP: constraint part I}
\int_{\bbR^d_x\times \bbR^d_u}A^af(x, u)\mu(dx\times du) + \int_{\bbR^d_x\times \bbR^d_\xi\setminus\{0_\xi\}}Bf(x, \xi)\rho(dx\times d\xi) = 0, \quad \forall f\in C^2_0(\bbR^d_x),
\end{equation}
where
\begin{equation*}
A^{a}f(x, u) = \frac{1}{2}\sum_{i, j}a_{ij}\partial^2_{ij} f(x) + {\langle u, \nabla f(x)\rangle}, \quad Bf(x, \xi) = f(x+\xi) - f(x).
\end{equation*} 
We will see in  Section \ref{sec: ta control} that it is essentially an equivalent characterization of  the time-average control problem \eqref{eqn: ta dynm const}-\eqref{eqn: ta cost const}.
%We have
%\begin{lem}
%Assuming that the matrix $a$ is positive definite, then the function $I=I(a, r, l, k, h)$ defined by \eqref{LP: cost}-\eqref{LP: constraint} is finite and continuous in $a, r, l, k$ and $h$. \note{proof?}
%\end{lem}
In Example \ref{ex: combined impulse}, we consider a particular case for which $I$ and the optimal solution $\mu^*, \rho^*$ can be explicitly determined.  From now on, we make the following assumptions.

\begin{asmp}[Regularity of linear programming]\label{asmp: regularity lp}
The function $I = I(a, r, l, k, h)$ defined by \eqref{LP: cost part I}-\eqref{LP: constraint part I} is measurable. 
\end{asmp}

\begin{asmp}[Model] \label{asmp: model} 
The predictable processes $(a_t)$ and $(b_t)$ are continuous and $(a_t)$ is positive definite on $[0, T]$. 
\end{asmp}

\begin{asmp}[Optimization criterion]\label{asmp: cost}
The parameters of the cost functional $(r_t)$, $(l_t)$, $(k_t)$ and $(h_t)$ are continuous and positive on $[0, T]$.
\end{asmp}

\begin{asmp}[Asymptotic framework]\label{asmp: beta}
{\color{black}The cost functionals satisfy the homogeneity property \eqref{eqn: homogeneous cost}  and the relation \eqref{eqn: order beta} holds for some $\beta>0$. }
\end{asmp}

\begin{rmk}
Let us comment briefly the above assumptions. Assumption \ref{asmp: regularity lp} is necessary to avoid pathological cases. In most examples, the function $I$ is continuous (see Examples \ref{ex: stochastic}-\ref{ex: combined singular}). Assumptions \ref{asmp: model}-\ref{asmp: cost} impose minimal regularity on the dynamics of $X^\circ$ and cost parameters.  Assumption \ref{asmp: beta}  ensures that all the costs have similar order of magnitude.
\end{rmk}

Second, we introduce the following notion.
\begin{dfn}
Let $\{Z, (Z^\varepsilon)_{\varepsilon}, \varepsilon > 0\}$ be random variables on the same probability space $(\Omega, \calF, \bbP)$. We say that $Z^\varepsilon$ is asymptotically bounded from below by $Z$ in probability if 
\begin{equation*}
\forall \delta > 0, \quad \lim_{\varepsilon \to 0} \bbP[Z^\varepsilon> Z -\delta] =1. 
\end{equation*}
We write $\liminf_{\varepsilon\to 0} Z^\varepsilon \geq_p Z$.
\end{dfn}

We now give the version of our main result for the case of combined regular and impulse control. 
\begin{theo}[Asymptotic lower bound for combined regular and impulse control]\label{theo: lower bound}
Under Assumptions \ref{asmp: regularity lp}, \ref{asmp: model}, \ref{asmp: cost} and \ref{asmp: beta},  we have
\begin{equation}\label{eqn: lower bound impulse}
\liminf_{\varepsilon\to 0} \frac{1}{\varepsilon^{\beta\zeta_D }} J^\varepsilon(u^\varepsilon, \tau^\varepsilon, \xi^\varepsilon) \geq_p \int_0^T  I(a_t, r_t, l_t, k_t, h_t) dt,
\end{equation}
for any sequence of admissible tracking strategies $\{(u^\varepsilon, \tau^\varepsilon, \xi^\varepsilon) \in \mathcal{A}, {\varepsilon >0}\}$.
\end{theo}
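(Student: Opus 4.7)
\medskip

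\textbf{Plan of proof.} My approach will follow the heuristic already sketched in Section~\ref{sec: asym framework}, turning each informal step into a rigorous one. First, I would reduce the problem to a block-by-block analysis. Fix a deterministic sequence of mesh sizes $\delta^\varepsilon$ such that $\delta^\varepsilon \to 0$ and $T^\varepsilon = \varepsilon^{-\alpha\beta}\delta^\varepsilon \to \infty$ (with $\alpha=2$), and partition $[0,T]$ into blocks $[t_k^\varepsilon, t_{k+1}^\varepsilon)$ of length $\delta^\varepsilon$. Using the decomposition $J^\varepsilon = \sum_k j_{t_k^\varepsilon}^\varepsilon (t_{k+1}^\varepsilon-t_k^\varepsilon)$ together with the homogeneity relations \eqref{eqn: homogeneous cost}--\eqref{eqn: order beta}, one gets $\varepsilon^{-\beta\zeta_D} J^\varepsilon = \sum_k I^{\varepsilon}_{t_k^\varepsilon}(t_{k+1}^\varepsilon-t_k^\varepsilon)$ up to an error that vanishes uniformly in probability by continuity of $(r_t,l_t,k_t,h_t)$. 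Thus the problem reduces to proving, for a given time $t$, a block-wise lower bound $\liminf_\varepsilon I^{\varepsilon}_t \geq_p I(a_t,r_t,l_t,k_t,h_t)$, and then invoking a Riemann-sum/Fatou argument combined with the measurability assumption on $I$.

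On each block I would introduce, as in the excerpt, the rescaled process $\wt X^{\varepsilon,t}$ and the associated rescaled controls $\wt u^{\varepsilon,t}$, $\wt\tau^{\varepsilon,t}_j$, $\wt\xi^{\varepsilon}_j$, which satisfy \eqref{pb: local dynamics} over the long horizon $[0,T^\varepsilon]$. Following \cite{kurtz1999martingale}, define the empirical (state-space) occupation measures
\begin{equation*}
\mu^{\varepsilon,t}(H_1) = \frac{1}{T^\varepsilon}\int_0^{T^\varepsilon}\!\!\ind{(\wt X^{\varepsilon,t}_s,\wt u^{\varepsilon,t}_s)\in H_1}\,ds,\qquad \rho^{\varepsilon,t}(H_2) = \frac{1}{T^\varepsilon}\sum_{0<\wt\tau^{\varepsilon,t}_j\leq T^\varepsilon}\ind{(\wt X^{\varepsilon,t}_{s-},\wt\xi^{\varepsilon}_j)\in H_2}.
\end{equation*}
By construction $I^{\varepsilon}_t$ equals the LP functional in \eqref{LP: cost part I} evaluated at $(\mu^{\varepsilon,t},\rho^{\varepsilon,t})$ with coefficients $(r_t,l_t,k_t,h_t)$ (modulo a negligible error coming from replacing the true coefficients by their value at $t$). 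The constraint~\eqref{LP: constraint part I} is verified asymptotically by applying It\^o's formula to $f\in C_0^2$ on $[0,T^\varepsilon]$ and dividing by $T^\varepsilon$: the $dW$-integral contributes $O(1/\sqrt{T^\varepsilon})$ in $L^2$, the boundary term $(f(\wt X_{T^\varepsilon})-f(\wt X_0))/T^\varepsilon$ vanishes since $f$ is bounded, and the drift/diffusion coefficients $\wt b^{\varepsilon,t}_s$, $\wt a^{\varepsilon,t}_s$ converge to $0$ and $a_t$ uniformly on $[0,T^\varepsilon]$ by continuity of $(a_t), (b_t)$, giving exactly \eqref{LP: constraint part I} with $a=a_t$ in the limit.

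To conclude, one needs compactness. We may assume, without loss of generality, that $\varepsilon^{-\beta\zeta_D}J^\varepsilon$ stays bounded along the subsequence of interest, since otherwise the inequality holds trivially. Then $\int D\,d\mu^{\varepsilon,t}$, $\int Q\,d\mu^{\varepsilon,t}$ and the total mass of $\rho^{\varepsilon,t}$ (weighted by $F+P$) stay bounded in probability. Using that $D$ and $Q$ have positive homogeneity degrees $\zeta_D,\zeta_Q>0$ and $\min_i F_i>0$, standard moment-based tightness arguments apply on $\bbR^d_x\times\bbR^d_u$ and $\bbR^d_x\times(\bbR^d_\xi\setminus\{0\})$, the latter space being endowed with a one-point compactification at $\xi=0$ to absorb small jumps (a standard device for impulse occupation measures). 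Any subsequential weak limit $(\mu^*,\rho^*)$ is then admissible for the linear program \eqref{LP: cost part I}--\eqref{LP: constraint part I} at parameters $(a_t,r_t,l_t,k_t,h_t)$, and by lower semicontinuity of the LP objective we obtain $\liminf_\varepsilon I^{\varepsilon}_t \geq I(a_t,r_t,l_t,k_t,h_t)$ in probability. Combining with the Riemann-sum reduction yields \eqref{eqn: lower bound impulse}.

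The hard part will be the tightness/limit step for $\rho^{\varepsilon,t}$ in the impulse component: as emphasized in Remark~\ref{rmk: method of proof}, the path-space control $\wt Y^\varepsilon = \sum_j \wt\xi^{\varepsilon}_j \ind{\wt\tau^{\varepsilon}_j\leq \cdot}$ is \emph{not} tight in the Skorohod topology, so one must work exclusively on the state space, carefully dealing with the singularity at $\xi=0$ and with possibly large jumps, and ensure that the constraint obtained from It\^o's formula survives the limit against a rich enough class of test functions $f\in C_0^2$. A secondary subtlety is that the whole argument must be carried out pathwise in $\omega$ (up to $\bbP$-null sets), which is what allows the conclusion in probability rather than merely in expectation, and requires uniform (in $\omega$ on good events) versions of the continuity estimates on $(a_t,r_t,l_t,k_t,h_t)$.
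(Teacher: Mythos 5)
Your fixed-$t$ analysis is in the same spirit as the paper's proof (state-space occupation measures \`a la Kurtz--Stockbridge, compactification, passing the It\=o constraint to the limit, lower semicontinuity of the cost), but two steps of your plan have genuine gaps. The first is the passage from blockwise bounds to the integrated statement. You propose to prove $\liminf_\varepsilon I^\varepsilon_t \geq_p I_t$ for each fixed $t$ and then invoke a ``Riemann-sum/Fatou argument''. Fatou's lemma applies to an almost-sure liminf, not to a liminf in probability: the subsequences and exceptional sets on which $I^\varepsilon_t$ misbehaves depend on $t$, and there are uncountably many $t$, so the blockwise in-probability bounds cannot simply be integrated. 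The paper avoids this by (i) replacing the fixed partition by a moving average (the auxiliary cost $\bar J^\varepsilon$ and Fubini in Lemma \ref{lem: averaged cost}), which yields the almost-sure inequality $\liminf_\varepsilon \varepsilon^{-\beta\zeta_D}J^\varepsilon \geq \liminf_\varepsilon \int_0^T I^\varepsilon_t\,dt$, and (ii) reducing the in-probability statement for $\int_0^T I^\varepsilon_t\,dt$ to the \emph{expectation} bound $\liminf_\varepsilon \mathbb{E}[Y I^\varepsilon_t] \geq \mathbb{E}[Y I_t]$ for arbitrary bounded weights $Y$ via Lemma \ref{lem: convergence}, the weight $Y$ being absorbed into $r,l,k,h$; Fatou is then used only at the level of expectations in $t$. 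Some such device is needed in your write-up; also, the localization step (stopping to bound $a,b,r,l,k,h$ and Girsanov to remove the drift) should be carried out first so that your uniform-continuity and boundedness claims make sense.

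The second gap is in the identification of limits at fixed $t$. You assert that ``any subsequential weak limit $(\mu^*,\rho^*)$'' is admissible for the LP with parameters $(a_t,r_t,l_t,k_t,h_t)$, but the occupation measures are random and the limiting constraint and cost are themselves random (they involve $a_t(\omega), r_t(\omega),\dots$); ordinary weak convergence of the laws of $(\mu^{\varepsilon,t},\rho^{\varepsilon,t})$ destroys the coupling with $\omega$. The paper resolves this by proving tightness with a tightness function and then working with \emph{stable} convergence of $(\mu^\varepsilon_t,\rho^\varepsilon_t)$ jointly with $\omega$, disintegrating the limit as $\mathbb{Q}_t(d\omega,d\mu,d\bar\rho)=\mathbb{P}(d\omega)\mathbb{Q}^\omega_t(d\mu,d\bar\rho)$ and showing in Lemma \ref{lem: characterization} that $\mathbb{Q}^\omega_t$-a.e.\ pair satisfies the constraint with $a=a_t(\omega)$. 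Moreover, passing the It\=o identity to the limit is not just a matter of the martingale and boundary terms vanishing: $Af(x,u)$ is unbounded in $u$, so one needs uniform integrability of $Af$ and $Bf$ against the occupation measures, which the paper obtains from the bounds $|Af|^{1/\beta}\leq \theta_f(1+C_A)$ and $|Bf|^{1/\beta}\leq \theta_f C_B$ together with the boundedness of the expected costs. Finally, a minor point: the compactification used is the one-point compactification at infinity of $\bbR^d_x\times(\bbR^d_\xi\setminus\{0_\xi\})$ (with $F(\infty)=\inf F>0$, $P(\infty)=0$, preserving lower semicontinuity), which absorbs not only vanishing jump sizes but also mass escaping to large $|x|$ or $|\xi|$; your ``compactification at $\xi=0$'' alone would not control the latter.
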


Thus, in Theorem \ref{theo: lower bound} we have expressed the lower bound for the tracking problem in terms of the integral of the solution $I$ of a linear program, which will be interpreted as time-average control of Brownian motion in Section \ref{sec: explicit I}. For any subsequence $\varepsilon'$, we can always pick a further subsequence $\varepsilon''$ such that 
\begin{equation*}
\liminf_{\varepsilon''\to 0} \frac{1}{(\varepsilon'')^{\beta \zeta_D}}J^{\varepsilon''}(u^{\varepsilon''}, \tau^{\varepsilon''}, \xi^{\varepsilon''}) \geq \int_0^T I(a_t, r_t, l_t, k_t, h_t) dt - \delta,
\end{equation*}
almost surely. Therefore, by Fatou's lemma, the following corollary holds. 
\begin{cor} We have
\begin{equation*}
\liminf_{\varepsilon\to 0} \frac{1}{\varepsilon^{\zeta_D \beta}} \mathbb{E}[{J^\varepsilon (u^\varepsilon, \tau^\varepsilon, \xi^\varepsilon)} \geq \bbE\Big[{\int_0^T  I(a_t, r_t, l_t, k_t, h_t) dt}\Big].
\end{equation*}
\end{cor}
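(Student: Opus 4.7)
The plan is to deduce the corollary from Theorem 2.1 by combining the subsequence principle with Fatou's lemma. Write $Z^\varepsilon := \varepsilon^{-\beta\zeta_D} J^\varepsilon(u^\varepsilon, \tau^\varepsilon, \xi^\varepsilon)$ and $Z := \int_0^T I(a_t, r_t, l_t, k_t, h_t)\,dt$. Both are non-negative: $J^\varepsilon \geq 0$ because each weight process is positive (Assumption 2.3) and each cost function $D, Q, F, P$ is non-negative by construction, while $I \geq 0$ is the infimum of a non-negative integral. Non-negativity is what will make Fatou's lemma applicable.

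By Theorem 2.1, for every fixed $\delta > 0$,
\begin{equation*}
\bbP[Z^\varepsilon \geq Z - \delta] \longrightarrow 1 \quad\text{as } \varepsilon \to 0.
\end{equation*}
Pick a sequence $\varepsilon_n \to 0$ realizing the liminf of expectations, that is, $\lim_n \bbE[Z^{\varepsilon_n}] = \liminf_{\varepsilon \to 0}\bbE[Z^\varepsilon]$ (if this liminf is $+\infty$, there is nothing to prove). Along this sequence, the above convergence in probability allows us to extract a further subsequence $\varepsilon_{n_k}$ with $\bbP[Z^{\varepsilon_{n_k}} < Z - \delta] \leq 2^{-k}$. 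Borel--Cantelli then yields $Z^{\varepsilon_{n_k}} \geq Z - \delta$ for all $k$ large enough almost surely, hence
\begin{equation*}
\liminf_{k\to\infty} Z^{\varepsilon_{n_k}} \geq Z - \delta \quad\text{a.s.}
\end{equation*}

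Fatou's lemma, valid because $Z^{\varepsilon_{n_k}} \geq 0$, gives
\begin{equation*}
\bbE[Z] - \delta \leq \bbE\bigl[\liminf_k Z^{\varepsilon_{n_k}}\bigr] \leq \liminf_k \bbE[Z^{\varepsilon_{n_k}}] = \lim_n \bbE[Z^{\varepsilon_n}] = \liminf_{\varepsilon \to 0}\bbE[Z^\varepsilon],
\end{equation*}
where the penultimate equality uses that $\bigl(\bbE[Z^{\varepsilon_{n_k}}]\bigr)_k$ is a subsequence of a convergent sequence and therefore shares its limit. Since $\delta > 0$ was arbitrary, letting $\delta \to 0$ yields the corollary.

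The only genuine content beyond Theorem 2.1 is the passage from the in-probability lower bound to an almost sure one along a subsequence, which is the standard step needed before Fatou can be invoked; this is the main (and only) technical point, handled by the rapidly convergent subsequence combined with Borel--Cantelli. Everything else is bookkeeping with liminfs.
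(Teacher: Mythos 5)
Your proposal is correct and follows essentially the same route as the paper: Theorem \ref{theo: lower bound} gives the in-probability bound, one extracts an almost surely convergent (sub)subsequence, and Fatou's lemma transfers the bound to expectations before letting $\delta \to 0$. Your write-up merely makes explicit the Borel--Cantelli extraction and the choice of a sequence realizing the liminf of expectations, which the paper leaves implicit.
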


\section{Extensions of Theorem \ref{theo: lower bound} to other types of control}\label{sec: extensions part I}

{\color{black}In this section, we consider the case of combined regular and singular control and those with only one type of control. In particular, we will see that in the presence of singular control, the operator $B$ is different. Formally we could give similar results for the combination of all three controls or even in the presence of several controls of the same type with different cost functions and scaling properties. To avoid cumbersome notation, we restrict ourselves to the cases meaningful in practice, which are illustrated by explicit examples in Section \ref{sec: explicit I}.}

\subsection{Combined regular and singular control}
When the fixed cost component is absent,  that is $F= 0$, impulse control and singular control can be merged. In that case, the natural way to formulate the tracking problem is to consider a strategy $(u^\varepsilon, \gamma^\varepsilon, \varphi^\varepsilon)$ with $u^\varepsilon$ a progressively measurable process as before, $\gamma^\varepsilon_t\in \Delta$ and $(\varphi^\varepsilon_t)$ a {\color{black}possibly discontinuous non-decreasing process} such that
\begin{equation*}
X^\varepsilon_t = -X^\circ_t + \int_0^tu^\varepsilon_s ds + \int_0^t \gamma^\varepsilon_{s}d\varphi^\varepsilon_s, 	\end{equation*}
and
\begin{equation*}
J^\varepsilon(u^\varepsilon, \gamma^\varepsilon, \varphi^\varepsilon) = \int_0^T\big(r_tD(X^\varepsilon_t) + \varepsilon^{\beta_Q}l_tQ(u^\varepsilon_t)\big)dt + \int_0^T \varepsilon^{\beta_P}h_tP(\gamma^\varepsilon_{t})d\varphi^\varepsilon_t.	
\end{equation*}
To avoid degeneracy, we assume that for any $\gamma\in\Delta$, 
\begin{equation}\label{eqn: P i pos}
P(\gamma)>0.
\end{equation}

Using similar heuristic arguments as those in the previous section, we are led to consider the time-average control of Brownian motion with combined regular and singular control 
 \begin{equation}\label{eqn: ta cost const singular}
I(a, r, l, h)=\inf_{(u, \gamma, \varphi)}{\limsup_{S\to \infty}}\frac{1}{S}\bbE\Big[\int_0^S \big(rD(X_s) + l Q( u_s)\big) ds + \int_0^S h_s P(\gamma_{s}) d\varphi_s\Big],
\end{equation}
where
\begin{equation}\label{eqn: ta dynm const singular}
{\color{black}X_s = \sqrt{a} W_s  + \int_0^s u_r dr + \int_0^s \gamma_r d\varphi_r.}
\end{equation}
The corresponding linear programming problem is given by 
\begin{equation}\label{LP: cost singular part I}
I(a, r, l, h) = \inf_{(\mu, \rho)} \int_{\bbR^d_x \times \bbR^d_u}\big(r D(x) + l Q(u)\big)\mu(dx \times du) + \int_{\bbR^d_x\times \Delta\times \bbR^+_\delta}h P(\gamma) \rho(dx\times d\gamma\times d\delta),
\end{equation}
with $(\mu, \rho)\in\scrP(\bbR^d_x\times \bbR^d_u)\times \scrM(\bbR^d_x\times \Delta \times \bbR^+_ \delta)$ satisfying the following constraint
\begin{equation}\label{LP: constraint singular part I}
\int_{\bbR^d_x\times \bbR^d_u}A^af(x, u)\mu(dx\times du) + \int_{\bbR^d_x\times \Delta \times \bbR^+_\delta}Bf(x, \gamma, \delta)\rho(dx\times d\gamma \times d\delta) = 0, \quad \forall f\in C^2_0(\bbR^d_x),
\end{equation}
where
\begin{equation*}
A^{a}f(x, u) = \frac{1}{2}\sum_{ij}a_{ij}\partial^2_{ij}f(x) + \langle u, \nabla f(x)\rangle, \quad Bf(x, \gamma, \delta) = \begin{cases}
\langle\gamma,\nabla f(x)\rangle, &\delta = 0,\\
\delta^{-1} \big(f(x+ \delta \gamma) -f(x)\big), &\delta>0.
\end{cases}
\end{equation*} 

%\noindent As before, we make the following assumption (see Example \ref{ex: combined singular} for a case where $I$ is continuous).
%\begin{asmp}[Regularity of linear programming]\label{asmp: regularity lp singular}
%The function $I = I(a, r, l, h)$ defined by \eqref{LP: cost singular part I}-\eqref{LP: constraint singular part I} is measurable. 
%\end{asmp}

%Under this assumption, the following result holds true.
{\color{black}We have the following theorem.
\begin{theo}[Asymptotic lower bound for combined regular and singular control]\label{theo: lower bound singular} Assume that $I(a, r, l, h)$ is measurable, that the parameters $(r_t)$, $(l_t)$ and $(h_t)$ are continuous and positive, that Assumption \ref{asmp: model} holds true and that Assumption \ref{asmp: beta} is satisfied for some $\beta>0$.
Then,
\begin{equation}\label{eqn: lower bound singular}
\liminf_{\varepsilon\to 0} \frac{1}{\varepsilon^{\beta\zeta_D }} J^\varepsilon (u^\varepsilon, \gamma^\varepsilon, \varphi^\varepsilon)\geq_p \int_0^T  I(a_t, r_t, l_t, h_t) dt, 
\end{equation}
 for any sequence of admissible  tracking strategies $\{(u^\varepsilon, \gamma^\varepsilon, \varphi^\varepsilon) \in\mathcal{A}, {\varepsilon >0}\}$.
\end{theo}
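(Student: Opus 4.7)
The plan is to follow the proof scheme of Theorem \ref{theo: lower bound} and adapt it to singular control by enlarging the occupation measure associated with the intervention term so that it simultaneously records the state, the direction $\gamma$, and the \emph{size} $\delta$ of each increment of $\varphi$. This extra $\delta$ coordinate is exactly what allows a single measure $\rho$ on $\bbR^d_x \times \Delta \times \bbR^+_\delta$ to encode both the continuous part of $\varphi$ (in the stratum $\delta=0$) and its jumps (on $\delta>0$), so that It\^o's formula yields a constraint of the form \eqref{LP: constraint singular part I} with the new operator $B$. Since $F\equiv 0$, the first identity in \eqref{eqn: order beta} is vacuous and the relevant scaling is determined by the remaining identities.

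First I would perform the same partition-and-rescaling as in Section \ref{sec: asym framework}: split $[0,T]$ into subintervals of length $\delta^\varepsilon\to 0$, set $T^\varepsilon = \varepsilon^{-2\beta}\delta^\varepsilon\to\infty$ and define $\wt X^{\varepsilon,t}_s = \varepsilon^{-\beta}X^\varepsilon_{t+\varepsilon^{2\beta}s}$. Homogeneity of $D,Q,P$ together with \eqref{eqn: order beta} (where only the $\beta_Q$ and $\beta_P$ identities are active) gives
\begin{equation*}
J^\varepsilon(u^\varepsilon,\gamma^\varepsilon,\varphi^\varepsilon) \simeq \varepsilon^{\beta\zeta_D}\int_0^T I^\varepsilon_t\,dt,
\end{equation*}
where $I^\varepsilon_t$ is the normalized cost of the rescaled system on $(0,T^\varepsilon]$; it thus suffices to establish the local lower bound $\liminf_{\varepsilon\to 0} I^\varepsilon_t \geq_p I(a_t,r_t,l_t,h_t)$ and integrate, the final integration step being identical to the one in Theorem \ref{theo: lower bound} once the measurability of $I$ is granted.

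To establish the local bound I would introduce the random measures
\begin{equation*}
\mu^{\varepsilon,t}(H) = \frac{1}{T^\varepsilon}\int_0^{T^\varepsilon}\indset{H}(\wt X^{\varepsilon,t}_s,\wt u^{\varepsilon,t}_s)\,ds,
\end{equation*}
\begin{equation*}
\rho^{\varepsilon,t}(H) = \frac{1}{T^\varepsilon}\bigg[\int_0^{T^\varepsilon}\indset{H}(\wt X^{\varepsilon,t}_s,\wt\gamma^{\varepsilon,t}_s,0)\,d\wt\varphi^{\varepsilon,t,c}_s + \sum_{0<\wt\tau^{\varepsilon,t}_j\leq T^\varepsilon}\wt\delta^\varepsilon_j\,\indset{H}(\wt X^{\varepsilon,t}_{\wt\tau^{\varepsilon,t}_j-},\wt\gamma^\varepsilon_j,\wt\delta^\varepsilon_j)\bigg],
\end{equation*}
where $\wt\varphi^{\varepsilon,t,c}$ is the continuous part and $(\wt\tau^{\varepsilon,t}_j,\wt\delta^\varepsilon_j,\wt\gamma^\varepsilon_j)$ parametrize the jumps. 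The key algebraic identity is that a jump of size $\delta$ in direction $\gamma$ at state $x$ contributes $f(x+\delta\gamma)-f(x) = \delta Bf(x,\gamma,\delta)$ to the It\^o expansion, while the continuous part contributes $\langle\gamma,\nabla f\rangle d\wt\varphi^{\varepsilon,t,c}_s = Bf(x,\gamma,0)d\wt\varphi^{\varepsilon,t,c}_s$. Applying It\^o's formula to $f(\wt X^{\varepsilon,t})$ for $f\in C^2_0(\bbR^d_x)$ and dividing by $T^\varepsilon$ therefore yields an approximate constraint $\int A^{\wt a^{\varepsilon,t}}f\,d\mu^{\varepsilon,t} + \int Bf\,d\rho^{\varepsilon,t} \to 0$, the right-hand side being a telescoping term and a martingale each of order $1/T^\varepsilon$.

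The main obstacle is the tightness and identification of weak limits of $(\mu^{\varepsilon,t},\rho^{\varepsilon,t})$, particularly on the unbounded coordinate $\bbR^+_\delta$. One may restrict to subsequences with $I^\varepsilon_t \leq M$, since otherwise the bound is trivial. The positivity condition \eqref{eqn: P i pos}, together with compactness of $\Delta$, then controls the total mass of $\rho^{\varepsilon,t}$ uniformly in $\varepsilon$, and $\zeta_D>0$ provides a uniform moment bound on $\mu^{\varepsilon,t}$; large values of $\delta$ cannot accumulate mass in the limit because their cost contribution is linear in $\delta$. After a one-point compactification of $\bbR^+_\delta$, one extracts a weakly convergent subsequence whose limit $(\mu^t,\rho^t)$ is feasible for \eqref{LP: cost singular part I}-\eqref{LP: constraint singular part I} with cost bounded by $\liminf_\varepsilon I^\varepsilon_t$, using continuity of $(a_t,r_t,l_t,h_t)$ to pass to the limit in the costs and in the operator $A^{\wt a^{\varepsilon,t}}$. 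This yields the local lower bound and hence \eqref{eqn: lower bound singular} by the same measurable-selection and Fatou-type argument used at the end of Section \ref{sec: asym framework}.
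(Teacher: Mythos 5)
Your proposal follows essentially the same route as the paper: after the same localization and rescaling, the paper defines a single occupation measure $\nu^\varepsilon_t = \frac{1}{T^\varepsilon}\int_0^{T^\varepsilon}\delta_{\{(\wt X^{\varepsilon,t}_s,\wt\gamma^{\varepsilon,t}_s,\Delta\wt\varphi^{\varepsilon,t}_s)\}}\,d\wt\varphi^{\varepsilon,t}_s$, which is exactly your continuous-part-plus-jumps decomposition written as one integral, and then reruns the combined regular/impulse argument (tightness of the occupation measures via the cost as a tightness function, stable convergence, It\^o-based identification of the limiting constraint with the new operator $B$, lower semicontinuity of the cost) with the two key ingredients being that $c$ is a tightness function and that \eqref{eqn: bound psi c} holds for the operators \eqref{eqn: operator A singular}--\eqref{eqn: operator B singular}.

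One technical point to adjust: since $C_B = hP(\gamma)$ is independent of both $x$ and $\delta$, the cost bound gives you a uniform bound on the total mass of $\rho^{\varepsilon,t}$ but no tightness in the $x$- or $\delta$-directions, so compactifying only $\bbR^+_\delta$ does not justify extracting a weakly convergent subsequence, and the heuristic that "large $\delta$ cannot accumulate mass because its cost contribution is linear in $\delta$" is not the operative mechanism (the cost per unit $\rho$-mass is $hP(\gamma)$, independent of $\delta$). The paper instead views $\rho^{\varepsilon,t}$ as elements of $\scrM(\overline{\bbR^d_x\times\Delta\times\bbR^+_\delta})$, the one-point compactification of the whole product, where the mass bound alone yields relative compactness; any mass escaping to the point at infinity is then harmless because $Bf$ (bounded, with $f$ compactly supported) is uniformly integrable so the limiting linear constraint only involves the finite part, and discarding the mass at infinity can only decrease the (extended, l.s.c.) cost. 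With this modification your argument matches the paper's proof.
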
}
Adapting the proofs of Theorem \ref{theo: lower bound} and Theorem \ref{theo: lower bound singular} in an obvious way, 
we easily obtain  the following bounds when only one control is present.

\subsection{Impulse control}
 Consider 
\begin{equation*}
\inf_{(\tau^\varepsilon, \xi^\varepsilon)\in \mathcal{A}}J^\varepsilon(u^\varepsilon, \tau^\varepsilon, \xi^\varepsilon), 
\end{equation*}
with
\begin{equation*}
 J^\varepsilon(u^\varepsilon, \tau^\varepsilon, \xi^\varepsilon) = \int_0^T r_t D(X_t^\varepsilon) dt + \sum_{0 < \tau^\varepsilon_j \leq T}\big( \varepsilon^{\beta_F}k_{\tau^\varepsilon_j}F(\xi_j^\varepsilon) + \varepsilon^{\beta_P}h_{\tau^\varepsilon_j}P(\xi_j^\varepsilon)\big),	
\end{equation*}
and
\begin{equation*}
X_t^\varepsilon = -X_t^\circ  +  \sum_{0< \tau^\varepsilon_j \leq t}\xi_j^\varepsilon.
\end{equation*}
We have the following theorem.{\color{black}
\begin{theo}[Asymptotic lower bound for impulse control]\label{theo: lower bound impulse} 
Let $I = I(a, r, k, h)$ be given by
\begin{equation*}\label{LP: cost}
I(a, r,  k, h) = \inf_{(\mu, \rho)} \int_{\bbR^d_x }\!r D(x)\mu(dx) + \int_{\bbR^d_x\times \bbR^d_\xi \setminus\{0_\xi\}}\!(kF(\xi) + h P(\xi))\rho(dx\times d\xi),
\end{equation*}
with $(\mu, \rho)\in\scrP(\bbR^d_x)\times \scrM(\bbR^d_x\times \bbR^d_\xi \setminus\{0_\xi\})$ satisfying the following constraint
\begin{equation*}\label{LP: constraint}
\int_{\bbR^d_x}A^af(x)\mu(dx) + \int_{\bbR^d_x\times \bbR^d_\xi\setminus\{0_\xi\}}Bf(x, \xi)\rho(dx\times d\xi) = 0, \quad \forall f\in C^2_0(\bbR^d_x),
\end{equation*}
where
\begin{equation*}
A^{a}f(x) = \frac{1}{2}\sum_{i, j}a_{ij}\partial^2_{ij} f(x), \quad Bf(x, \xi) = f(x+\xi) - f(x).
\end{equation*} 
Assume that $I(a,r,k,h)$ is measurable, that the parameters $(r_t)$, $(k_t)$ and $(h_t)$ are continuous and positive, that Assumption \ref{asmp: model} holds true, and that Assumption \ref{asmp: beta} is satisfied for some $\beta>0$.
Then, 
\begin{equation*}%\label{eqn: lower bound impulse}
\liminf_{\varepsilon\to 0} \frac{1}{\varepsilon^{\beta\zeta_D }} J^\varepsilon(\tau^\varepsilon, \xi^\varepsilon) \geq_p \int_0^T  I(a_t, r_t, k_t, h_t) dt.
\end{equation*}
\end{theo}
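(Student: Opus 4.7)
The plan is to follow the same template established heuristically in Section \ref{sec: asym framework} and rigorously (presumably) for Theorem \ref{theo: lower bound}, specialized to the setting where the regular control $u$ is absent. The admissible class consists only of $(\tau^\varepsilon,\xi^\varepsilon)$ and the cost $J^\varepsilon$ contains no $Q$-term, so the operator $A^a$ reduces to the pure diffusion generator. Consequently the linear program loses the $\mu$-marginal over the control variable $u$, and the scaling relation \eqref{eqn: order beta} degenerates to $\beta = \beta_F/(\zeta_D + \alpha - \zeta_F) = \beta_P/(\zeta_D + \alpha - \zeta_P)$ with $\alpha = 2$.

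First I would partition $[0,T]$ as $t_k^\varepsilon = k\delta^\varepsilon$ with $\delta^\varepsilon \to 0$ slower than $\varepsilon^{\alpha\beta}$, so that on each window the rescaled horizon $T^\varepsilon = \varepsilon^{-\alpha\beta}\delta^\varepsilon \to \infty$. Writing
\begin{equation*}
J^\varepsilon(\tau^\varepsilon,\xi^\varepsilon) = \sum_{k=0}^{K^\varepsilon-1} j_{t_k^\varepsilon}^\varepsilon (t_{k+1}^\varepsilon - t_k^\varepsilon),
\end{equation*}
I would rescale via $\widetilde X^{\varepsilon,t}_s = \varepsilon^{-\beta} X^\varepsilon_{t + \varepsilon^{\alpha\beta}s}$, so that by the homogeneity \eqref{eqn: homogeneous cost} together with Assumption \ref{asmp: beta},
\begin{equation*}
j_t^\varepsilon \simeq \varepsilon^{\beta\zeta_D} \, I_t^\varepsilon, \quad I_t^\varepsilon = \frac{1}{T^\varepsilon}\Bigl[\int_0^{T^\varepsilon} r_t D(\widetilde X^{\varepsilon,t}_s)\,ds + \sum_{0<\widetilde\tau_j^{\varepsilon,t}\leq T^\varepsilon}\bigl(k_t F(\widetilde\xi_j^\varepsilon) + h_t P(\widetilde\xi_j^\varepsilon)\bigr)\Bigr],
\end{equation*}
the drift $\widetilde b^{\varepsilon,t}$ vanishing and the diffusion $\widetilde a^{\varepsilon,t}$ converging uniformly to $a_t$ by Assumption \ref{asmp: model}.

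Next, to show $\liminf_{\varepsilon\to 0} I_t^\varepsilon \geq I(a_t,r_t,k_t,h_t)$ in probability pointwise in $t$, I would introduce the empirical occupation measures
\begin{equation*}
\mu^{\varepsilon,t}(H_1) = \frac{1}{T^\varepsilon}\int_0^{T^\varepsilon}\mathbbm{1}_{H_1}(\widetilde X^{\varepsilon,t}_s)\,ds, \quad \rho^{\varepsilon,t}(H_2) = \frac{1}{T^\varepsilon}\!\!\!\!\sum_{0<\widetilde\tau_j^{\varepsilon,t}\leq T^\varepsilon}\!\!\!\!\mathbbm{1}_{H_2}(\widetilde X^{\varepsilon,t}_{s-},\widetilde\xi_j^\varepsilon).
\end{equation*}
Applying It\^o's formula to $f\in C_0^2(\bbR^d_x)$ evaluated along $\widetilde X^{\varepsilon,t}$, taking expectation and dividing by $T^\varepsilon$ yields a constraint of the form $\int A^{\widetilde a^{\varepsilon,t}} f \,d\mu^{\varepsilon,t} + \int Bf\,d\rho^{\varepsilon,t} = R^\varepsilon(f)$ where the remainder $R^\varepsilon(f)$ comes from boundary terms in $f(\widetilde X_{T^\varepsilon}) - f(\widetilde X_0)$ and vanishes in $L^1$ because $\|f\|_\infty < \infty$ and $T^\varepsilon \to \infty$. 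One then restricts to a high-probability event where $I_t^\varepsilon$ is bounded; on this event the $D$-penalty ensures $\mu^{\varepsilon,t}$ is tight, and the combined $F$-$P$ penalty (with $\min_i F_i > 0$ from \eqref{eqn: F i pos}) bounds the total mass and enforces tightness of $\rho^{\varepsilon,t}$ on $\bbR^d_x \times (\bbR^d_\xi\setminus\{0_\xi\})$. Passing to a weak subsequential limit $(\mu,\rho)$, the constraint \eqref{cons.eq} holds, so the liminf of $I_t^\varepsilon$ dominates the LP value $I(a_t,r_t,k_t,h_t)$.

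Finally I would integrate in $t$: writing $\int_0^T j^\varepsilon_t\,dt \simeq \varepsilon^{\beta\zeta_D}\int_0^T I^\varepsilon_t\,dt$ and using Fatou together with the measurability of $I$ (Assumption \ref{asmp: regularity lp} adapted) and continuity of $(r_t,k_t,h_t,a_t)$, one obtains \eqref{eqn: lower bound singular}-style convergence in probability. The main obstacle, as stressed in Remark \ref{rmk: method of proof}, is that the impulse term $\widetilde Y^\varepsilon = \sum \widetilde\xi_j^\varepsilon \mathbbm{1}_{\widetilde\tau_j^{\varepsilon}\leq \cdot}$ is not tight in the Skorohod topology; the crucial trick therefore is to avoid path-space tightness altogether and work exclusively with the state-space occupation measures $(\mu^{\varepsilon,t},\rho^{\varepsilon,t})$, whose tightness follows directly from the cost bound. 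A secondary technical point is the uniformity of the liminf statement in $t$ needed to integrate: this I would handle via a subsequence-of-subsequences argument (as already used in the corollary following Theorem \ref{theo: lower bound}) combined with a localization on a full-measure set where the integrand is well-defined.
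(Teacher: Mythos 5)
Your overall architecture is the one the paper actually uses (Section \ref{sec: proof} specialized to the case without regular control: localization, decomposition into windows, rescaling, empirical occupation measures, It\=o constraint, weak limits, lower semicontinuity of the cost), but there are two concrete gaps. The first is your claim that the cost bound ``enforces tightness of $\rho^{\varepsilon,t}$ on $\bbR^d_x\times(\bbR^d_\xi\setminus\{0_\xi\})$''. It does not: since $k_t\min_i F_i>0$, the bound on $I^\varepsilon_t$ only controls the \emph{total mass} of $\rho^{\varepsilon,t}$ (the jump rate), but the cost never penalizes the location $x$ at which jumps occur, nor (when the proportional component is degenerate) the jump size, so mass may escape towards $|x|\to\infty$, $|\xi|\to\infty$, or $\xi\to 0_\xi$. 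This matters because $Bf(x,\xi)=f(x+\xi)-f(x)$ does not vanish along such escaping sequences (take $x+\xi$ bounded with $|x|\to\infty$), so ``pass to a weak subsequential limit, the constraint holds'' is not justified as stated. The paper's proof of Lemma \ref{lem: characterization} handles this by viewing $\rho^{\varepsilon,t}$ as a measure on the one-point compactification $\overline{\bbR^d_x\times\bbR^d_\xi\setminus\{0_\xi\}}$, extending $F$ and $P$ by \eqref{eqn: extend to infty} so that the cost remains l.s.c.\ and is a tightness function there, and then showing via the domination \eqref{eqn: bound psi c} and uniform integrability that the constraint holds for the finite part of the limit, while by \eqref{eqn: c infty} discarding the atom at infinity only decreases the cost. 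Some such device (or a substitute argument) is needed; it is precisely the point where the impulse case differs from a routine weak-convergence proof.

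The second gap is the passage from a per-$t$ statement to the in-probability bound for the time integral in the presence of the random coefficients $a_t,r_t,k_t,h_t$. A ``liminf in probability'' pointwise in $t$ cannot simply be integrated, and a plain weak subsequential limit of the random measures $(\mu^{\varepsilon,t},\rho^{\varepsilon,t})$ loses the coupling with $\omega$, which you need because the feasible set and the LP value depend on the random matrix $a_t(\omega)$ and the random weights. The paper's route is different and does real work here: after localization (stopping to bound the coefficients and a Girsanov change of measure to remove the drift), Lemma \ref{lem: averaged cost} and the reduction lemma of Section \ref{sec: reduction} show it suffices to prove $\liminf_{\varepsilon\to 0}\E{Y I^\varepsilon_t}\geq \E{Y I_t}$ for every bounded weight $Y$ (absorbed into $r,k,h$, which need not be adapted), and Lemma \ref{lem: convergence} upgrades this family of expectation bounds to the $\geq_p$ statement for $\int_0^T$. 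The identification step then uses \emph{stable} convergence of $(\mu^{\varepsilon,t},\rho^{\varepsilon,t})$ in $\scrP^\bbP(\Omega\times\scrP\times\scrM(\overline{\,\cdot\,}))$ with disintegration $\bbQ^\omega_t$, so that $\bbP$-a.s.\ the limit measures are $\bbQ^\omega_t$-a.s.\ feasible for the LP with parameter $a_t(\omega)$ and the l.s.c.\ cost is evaluated at the same $\omega$. Your ``high-probability event plus subsequence-of-subsequences'' sketch does not supply this conditional identification, and without it the final inequality $\liminf\E{I^\varepsilon_t}\geq\E{I(a_t,r_t,k_t,h_t)}$ does not follow.
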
}

See Example \ref{ex: impulse} for a closed form solution of $I$. 

\subsection{Singular control}
Consider
\begin{equation*}
\inf_{(\gamma^\varepsilon, \varphi^\varepsilon)\in\mathcal{A}} J^\varepsilon (\gamma^\varepsilon, \varphi^\varepsilon),
\end{equation*}
with
\begin{equation*}
J^\varepsilon( \gamma^\varepsilon, \varphi^\varepsilon) = \int_0^Tr_tD(X^\varepsilon_t) dt + \int_0^T \varepsilon^{\beta_P}h_tP(\gamma^\varepsilon_{t})d\varphi^\varepsilon_t.	
\end{equation*}
and
\begin{equation*}
X^\varepsilon_t = -X^\circ_t+ \int_0^t \gamma^\varepsilon_{s}d\varphi^\varepsilon_s. 	
\end{equation*}
We have the following theorem.{\color{black}
\begin{theo}[Asymptotic lower bound for singular control]\label{theo: lower bound singular only} 
Let $I = I(a, r, h)$ be given by
\begin{equation*}\label{LP: cost singular}
I(a, r,  h) = \inf_{(\mu, \rho)} \int_{\bbR^d_x }r D(x) \mu(dx ) + \int_{\bbR^d_x\times \Delta\times \bbR^+_\delta}h P(\gamma) \rho(dx\times d\gamma\times d\delta),
\end{equation*}
with $(\mu, \rho)\in\scrP(\bbR^d_x)\times \scrM(\bbR^d_x\times \Delta \times \bbR^+_ \delta)$ satisfying the following constraint
\begin{equation*}\label{LP: constraint singular}
\int_{\bbR^d_x}A^af(x)\mu(dx) + \int_{\bbR^d_x\times \Delta \times \bbR^+_\delta}Bf(x, \gamma, \delta)\rho(dx\times d\gamma \times d\delta) = 0, \quad \forall f\in C^2_0(\bbR^d_x),
\end{equation*}
where
\begin{equation*}
A^{a}f(x) = \frac{1}{2}\sum_{ij}a_{ij}\partial^2_{ij}f(x), \quad Bf(x, \gamma, \delta) = \begin{cases}
\langle\gamma, \nabla f(x) \rangle, &\delta = 0,\\
\delta^{-1} \big(f(x+ \delta \gamma) -f(x)\big), &\delta>0.
\end{cases}
\end{equation*} 
Assume that $I(a,r,h)$ is measurable, that the parameters $(r_t)$ and $(h_t)$ are continuous and positive, that Assumption \ref{asmp: model} holds true and that Assumption \ref{asmp: beta} is satisfied for some $\beta>0$. Then, 
\begin{equation*}%\label{eqn: lower bound impulse}
\liminf_{\varepsilon\to 0} \frac{1}{\varepsilon^{\beta\zeta_D }} J^\varepsilon(\gamma^\varepsilon, \varphi^\varepsilon) \geq_p \int_0^T  I(a_t, r_t,  h_t) dt.
\end{equation*}
\end{theo}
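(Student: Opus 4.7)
The plan is to reduce Theorem 3.3 to Theorem 3.1 by suppressing the regular control, and to follow the localization scheme outlined in Section 2.1 verbatim with $u^\varepsilon\equiv 0$ and $F\equiv 0$. First I would partition $[0,T]$ into intervals of length $\delta^\varepsilon\to 0$ with $T^\varepsilon = \delta^\varepsilon/\varepsilon^{2\beta} \to \infty$, choose $\beta$ from Assumption 4.4 (i.e., $\beta\zeta_D = \beta_P - (2-\zeta_P)\beta$), and write $J^\varepsilon/\varepsilon^{\beta\zeta_D} \simeq \sum_k I^\varepsilon_{t_k^\varepsilon}(t^\varepsilon_{k+1}-t^\varepsilon_k)$ where $I^\varepsilon_t$ is the natural analogue of \eqref{eqn: bm cost eps} for singular control. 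By continuity of $a,r,h$ on $[0,T]$ and the homogeneity \eqref{eqn: homogeneous cost}, the drift $\tilde b^{\varepsilon,t}$ vanishes and $\tilde a^{\varepsilon,t}_s \to a_t$ uniformly, so the rescaled dynamics on $(0,T^\varepsilon]$ is asymptotically a Brownian motion of covariance $a_t$ acted on by the rescaled singular control $(\tilde\gamma^{\varepsilon,t},\tilde\varphi^{\varepsilon,t})$.

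Next I would implement the occupation-measure approach of Remark 2.2. For each $t\in[0,T]$ and each $\varepsilon$, define the random measures on $\mathbb R^d_x$ and $\mathbb R^d_x\times\Delta\times\mathbb R^+_\delta$ by
\begin{equation*}
\mu^\varepsilon_t(H_1) = \frac{1}{T^\varepsilon}\int_0^{T^\varepsilon} \indset{H_1}(\tilde X^{\varepsilon,t}_s) ds,\qquad \rho^\varepsilon_t(H_2) = \frac{1}{T^\varepsilon}\int_0^{T^\varepsilon}\indset{H_2}(\tilde X^{\varepsilon,t}_{s-},\tilde \gamma^{\varepsilon,t}_s,\Delta\tilde\varphi^{\varepsilon,t}_s) \, d\tilde\varphi^{\varepsilon,t}_s,
\end{equation*}
where $\Delta\tilde\varphi^{\varepsilon,t}_s$ records the jump size (zero on the continuous part). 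Applying It\^o's formula to $f\in C^2_0(\mathbb R^d_x)$ along $\tilde X^{\varepsilon,t}$, dividing by $T^\varepsilon$, and taking expectations, the stochastic-integral term vanishes and the remainder produces the constraint \eqref{LP: constraint singular part I} up to an error of order $1/T^\varepsilon$; meanwhile the cost $I^\varepsilon_t$ becomes exactly the integrand of the linear program against $(\mu^\varepsilon_t,\rho^\varepsilon_t)$.

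The main obstacle is establishing the tightness of $(\mu^\varepsilon_t,\rho^\varepsilon_t)$ uniformly enough in $t$ to take limits pathwise, and this is precisely where the singular-control case is delicate. Tightness of $\mu^\varepsilon_t$ on $\mathbb R^d_x$ is obtained by a Chebyshev/coercivity argument using $D(x)\to \infty$ as $|x|\to\infty$ and the fact that the relevant piece of the cost is bounded in probability (otherwise the claimed lower bound holds trivially). For the singular part, tightness of $\rho^\varepsilon_t$ on $\mathbb R^d_x\times\Delta\times\mathbb R^+_\delta$ is delicate in the $\delta$-direction: one compactifies by adding $\{\infty\}$ to $\mathbb R^+_\delta$ and uses the coercivity $P(\gamma)\geq c>0$ on $\Delta$ from \eqref{eqn: P i pos} together with the assumption that the cost is $O_p(1)$ to bound the total mass $\rho^\varepsilon_t(\mathbb R^d_x\times\Delta\times\mathbb R^+_\delta)$ uniformly. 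Any subsequential limit $(\mu_t,\rho_t)$ satisfies \eqref{LP: constraint singular part I}, mass on $\{\infty\}$ can be discarded thanks to continuity of $Bf$ at $\delta=\infty$ for $f\in C^2_0$, hence the cost contribution against $(\mu_t,\rho_t)$ is at least $I(a_t,r_t,h_t)$.

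Finally I would combine these local bounds. By the pathwise-lower-bound technology of Theorem 2.1 (the combination of \cite{kurtz1999martingale} and \cite{kushner1993limit} alluded to in Remark 2.2), the inequality $I^\varepsilon_t \gtrsim I(a_t,r_t,h_t)$ upgrades from an expectation bound to $\liminf_{\varepsilon\to 0} I^\varepsilon_t \geq_p I(a_t,r_t,h_t)$; measurability of $I$ and continuity of $a_t,r_t,h_t$ then yield $\liminf_{\varepsilon\to 0}\sum_k I^\varepsilon_{t_k^\varepsilon}(t^\varepsilon_{k+1}-t^\varepsilon_k)\geq_p \int_0^T I(a_t,r_t,h_t)\,dt$ via a Riemann-sum approximation, completing the proof. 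The step I expect to be hardest is the upgrade from the localized in-probability inequality to the global one uniformly in the partition, because the singular control term is not tight in the Skorokhod topology (cf. Remark 2.2) and so the localization error has to be controlled using the occupation-measure formulation rather than functional convergence of $\tilde\varphi^{\varepsilon,t}$.
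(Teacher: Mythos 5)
Your proposal follows essentially the same route as the paper, which proves this theorem by the direct adaptation of the combined regular-and-singular case: rescaling on windows of length $\delta^\varepsilon$, empirical occupation measures $\mu^\varepsilon_t$ and $\rho^\varepsilon_t$ (the latter integrated against $d\wt{\varphi}^{\varepsilon,t}$ with the jump-size coordinate $\Delta\wt{\varphi}^{\varepsilon,t}_s$), one-point compactification with the cost as a tightness function (relying on $P(\gamma)>0$ on $\Delta$ and the bound \eqref{eqn: bound psi c} for the singular operator $B$), It\=o's formula to obtain the linear-programming constraint in the limit, and lower semicontinuity plus stable convergence. The only cosmetic difference is that the paper's rigorous reduction to the local problems goes through the Fubini-based auxiliary cost functional and the weighted-expectation duality of Lemma \ref{lem: convergence} (absorbing bounded weights into the coefficients), rather than a Riemann sum and a pointwise-in-$t$ in-probability upgrade, but this is precisely the machinery you invoke by reference.
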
}

See Example \ref{ex: singular}  for a closed form solution of $I$. 

\subsection{Regular control}
Consider
\begin{equation*}
\inf_{u^\varepsilon\in \mathcal{A}}J^\varepsilon(u^\varepsilon), 
\end{equation*}
with
\begin{equation*}
J^\varepsilon(u^\varepsilon) = \int_0^T \big(r_t D(X_t^\varepsilon) + \varepsilon^{\beta_Q}l_t Q(u^\varepsilon_t)\big) dt,
\end{equation*}
and
\begin{equation*}
X_t^\varepsilon = -X_t^\circ +\int_0^t u^\varepsilon_s ds.
\end{equation*}
We have the following theorem.{\color{black}
\begin{theo}[Asymptotic lower bound for regular control] \label{theo: lower bound stoch}
Let $I = I(a, r, l)$ be given by
\begin{equation*}%\label{LP: cost}
I(a, r, l) = \inf_{\mu} \int_{\bbR^d_x \times \bbR^d_u}\!\big(r D(x) + lQ(u)\big)\mu(dx, du) , 
\end{equation*}
with $\mu\in\scrP(\bbR^d_x\times \bbR^d_u)$ satisfying the following constraint
\begin{equation*}%\label{LP: constraint}
\int_{\bbR^d_x\times \bbR^d_u}A^af(x, u)\mu(dx, du) = 0, \quad \forall f\in C^2_0(\bbR^d_x),
\end{equation*}
where
\begin{equation*}
A^{a}f(x, u) = \frac{1}{2}\sum_{i, j}a_{ij}\partial^2_{ij} f(x) + \langle u, \nabla f(x)\rangle.
\end{equation*} 

Assume that $I(a,r,l)$ is measurable, that the parameters $(r_t)$ and $(l_t)$ are continuous and positive on $[0,T]$, that Assumption \ref{asmp: model} holds true and that Assumption \ref{asmp: beta} is satisfied for some $\beta>0$.
Then, 
\begin{equation*}%\label{eqn: lower bound impulse}
\liminf_{\varepsilon\to 0} \frac{1}{\varepsilon^{\beta\zeta_D }} J^\varepsilon(u^\varepsilon) \geq_p \int_0^T  I(a_t, r_t, l_t) dt.
\end{equation*}
\end{theo}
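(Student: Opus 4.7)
\textbf{Proof proposal for Theorem \ref{theo: lower bound stoch}.} The regular-control case is a specialization of Theorem \ref{theo: lower bound}, with the impulse component absent, so I would follow the same blueprint laid out in Section \ref{sec: asym framework} and Remark \ref{rmk: method of proof}, while exploiting the simplifications that come from having no jump term. Fix a sequence $(u^\varepsilon)$ of admissible controls and a partition $\{t^\varepsilon_k = k\delta^\varepsilon\}$ of $[0,T]$ with $\delta^\varepsilon \to 0$ chosen so that $T^\varepsilon := \varepsilon^{-2\beta}\delta^\varepsilon \to \infty$. As in Section \ref{sec: asym framework}, on each slab $(t^\varepsilon_k, t^\varepsilon_k+\delta^\varepsilon]$ I rescale by setting $\wt X^{\varepsilon,t}_s = \varepsilon^{-\beta} X^\varepsilon_{t+\varepsilon^{2\beta}s}$ and $\wt u^{\varepsilon,t}_s = \varepsilon^{\beta}u^\varepsilon_{t+\varepsilon^{2\beta}s}$, so that $J^\varepsilon(u^\varepsilon)\simeq \sum_k j^\varepsilon_{t^\varepsilon_k}(t^\varepsilon_{k+1}-t^\varepsilon_k)$ with $j^\varepsilon_t = \varepsilon^{\beta\zeta_D}I^\varepsilon_t$ and
\begin{equation*}
I^\varepsilon_t = \frac{1}{T^\varepsilon}\int_0^{T^\varepsilon}\!\bigl(r_t D(\wt X^{\varepsilon,t}_s)+l_t Q(\wt u^{\varepsilon,t}_s)\bigr)\,ds + o(1),
\end{equation*}
the $o(1)$ coming from the continuity of $(r_t)$, $(l_t)$, $(a_t)$, $(b_t)$ under Assumptions \ref{asmp: model}--\ref{asmp: cost}. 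Thus it suffices to prove the slab-wise bound $\liminf_{\varepsilon\to 0} I^\varepsilon_t \geq_p I(a_t,r_t,l_t)$ for a.e.\ $t$, and then integrate via a Fatou-type argument to conclude \eqref{eqn: lower bound impulse}.

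For the slab-wise bound I would introduce, following \cite{kurtz1999martingale} and the strategy sketched in Remark \ref{rmk: method of proof}, the random empirical occupation measure on the state-control space,
\begin{equation*}
\mu^{\varepsilon,t}(H) := \frac{1}{T^\varepsilon}\int_0^{T^\varepsilon}\indset{H}\!\bigl(\wt X^{\varepsilon,t}_s,\wt u^{\varepsilon,t}_s\bigr)\,ds,\qquad H\in \mathcal B(\mathbb R^d_x\times \mathbb R^d_u),
\end{equation*}
so that $I^\varepsilon_t = \int (r_t D(x)+l_t Q(u))\,\mu^{\varepsilon,t}(dx\times du) + o(1)$. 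The key lemma is that $\bbP$-almost surely, along any subsequence along which $I^\varepsilon_t$ stays bounded, $(\mu^{\varepsilon,t})$ is tight and any weak limit $\mu$ is an admissible competitor in the linear program \eqref{LP: cost part I}--\eqref{LP: constraint part I} specialized to the regular-control case. Tightness of the $x$-marginal comes from the coercivity $D(x)\to\infty$ as $|x|\to\infty$ (which follows from positivity of $D$ together with the homogeneity $\zeta_D>0$), tightness of the $u$-marginal from the coercivity $Q(u)\to\infty$ as $|u|\to\infty$ (guaranteed by $\zeta_Q>1$ and positivity of $Q$ away from $0$), both combined with the uniform bound on $\int(rD+lQ)\,d\mu^{\varepsilon,t}$. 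The constraint \eqref{LP: constraint part I} is verified by applying Itô's formula to $f(\wt X^{\varepsilon,t}_s)$ for arbitrary $f\in C^2_0(\mathbb R^d_x)$: one obtains
\begin{equation*}
\frac{f(\wt X^{\varepsilon,t}_{T^\varepsilon})-f(\wt X^{\varepsilon,t}_0)}{T^\varepsilon} = \int A^{\wt a^{\varepsilon,t}_s}\!f(x,u)\,\mu^{\varepsilon,t}(dx\times du) + \frac{\text{(martingale)}_{T^\varepsilon}}{T^\varepsilon} + \text{drift term};
\end{equation*}
the left-hand side is $O(1/T^\varepsilon)$ because $f$ is bounded, the drift term is $o(1)$ because $\wt b^{\varepsilon,t}_s = -\varepsilon^{\beta}b_{t+\varepsilon^{2\beta}s}\to 0$, and the quadratic variation of the martingale is $O(1/T^\varepsilon)$ by Itô's isometry, so Doob's inequality yields almost-sure convergence to $0$ along a suitable subsequence. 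Replacing $\wt a^{\varepsilon,t}_s$ by its limit $a_t$ and passing to the limit weakly gives $\int A^{a_t}f\,d\mu=0$ for every $f\in C^2_0$, which is precisely \eqref{LP: constraint part I} in the regular-control setting.

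Combining tightness with the definition of $I(a_t,r_t,l_t)$ as the infimum of $\int(rD+lQ)\,d\mu$ over admissible $\mu$, a standard lower-semicontinuity argument (the integrand is nonnegative and continuous in $(x,u)$) yields $\liminf_{\varepsilon\to 0}I^\varepsilon_t \geq I(a_t,r_t,l_t)$ almost surely along subsequences, and hence in probability. The passage from the slab-wise statement to the integrated statement \eqref{eqn: lower bound impulse} mirrors the end of the proof of Theorem \ref{theo: lower bound}: one extracts, for a given $\delta>0$, a subsequence along which the Riemann sum $\sum_k j^\varepsilon_{t^\varepsilon_k}(t^\varepsilon_{k+1}-t^\varepsilon_k)/\varepsilon^{\beta\zeta_D}$ converges almost surely to $\int_0^T I(a_t,r_t,l_t)\,dt$ via dominated convergence applied to the truncated integrands $I^\varepsilon_t\wedge M$, uses measurability of $I$ (the analogue of Assumption \ref{asmp: regularity lp}) to ensure the right-hand side is a well-defined random variable, and finally upgrades almost-sure convergence along subsequences to convergence in probability.

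The main obstacle, as in Theorem \ref{theo: lower bound}, is controlling the occupation measure $\mu^{\varepsilon,t}$ uniformly: one must rule out escape of mass of the $u$-marginal to infinity despite $Q$ being unbounded, and must justify the exchange of limit and integral when the Itô martingale term is driven by the fast Brownian increment $\wt W^{\varepsilon,t}$. I expect both issues to be resolved by the coercivity of $Q$ and by a standard $L^2$ martingale estimate, but the pathwise (rather than in-expectation) nature of the conclusion forces the subsequence extraction argument to be carried out carefully, rather than via Fatou on expectations; this is precisely where the hybrid of \cite{kurtz1999martingale} and \cite{kushner1993limit} advocated in Remark \ref{rmk: method of proof} is needed.
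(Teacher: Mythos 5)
Your overall architecture (slab decomposition, rescaling, empirical occupation measures, an It\=o-derived linear constraint, lower semicontinuity of the cost) is indeed the one the paper uses to prove Theorem \ref{theo: lower bound} and then specializes to the regular-control case, and you are right that the absence of the jump measure removes the compactification issue. However, the final assembly step is wrong as stated: you claim to extract a subsequence along which the normalized Riemann sum ``converges almost surely to $\int_0^T I(a_t,r_t,l_t)\,dt$ via dominated convergence applied to the truncated integrands''. For an arbitrary admissible sequence $(u^\varepsilon)$ there is no convergence at all --- the controls may be far from optimal, so the local costs $I^\varepsilon_t$ need not approach $I_t$ --- and only a liminf inequality can be asserted; the relevant tool is Fatou, not dominated convergence. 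Moreover, your slab-wise bound is obtained along subsequences that depend both on $\omega$ (the weak limits of $\mu^{\varepsilon,t}(\omega)$) and on $t$ (the a.s. convergence of the normalized martingale term), so before integrating in $t$ you must produce a single subsequence valid for $\lambda\otimes\bbP$-a.e.\ $(t,\omega)$, simultaneously for a countable dense family of test functions; this diagonal/Fubini step is not addressed. The paper sidesteps all of this: Lemma \ref{lem: averaged cost} gives the almost-sure comparison of $\varepsilon^{-\beta\zeta_D}J^\varepsilon$ with $\int_0^T I^\varepsilon_t\,dt$, and then, via Lemma \ref{lem: convergence}, the in-probability conclusion is reduced to proving $\liminf_{\varepsilon\to 0}\E{Y I^\varepsilon_t}\geq \E{Y I_t}$ for every bounded positive random variable $Y$ (absorbed into the weights $r_t$, $l_t$, which need not be adapted); the local bound is then proved \emph{in expectation}, using stable convergence of the random occupation measures (Lemma \ref{lem: characterization}), never pathwise.

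A second concrete gap: to verify the linear constraint for a weak limit $\mu$ you must pass to the limit in $\int A^{a_t}f\,d\mu^{\varepsilon,t}$, but $A^{a_t}f(x,u)$ contains $\langle u,\nabla f(x)\rangle$, which is unbounded in $u$, so weak convergence of $\mu^{\varepsilon,t}$ alone does not suffice. The paper obtains uniform integrability from the bound $|Af|^{1/\beta}\leq \theta_f(1+C_A)$ of \eqref{eqn: bound psi c} (available because $\zeta_Q>1$, i.e.\ $Q$ grows superlinearly) combined with the uniform cost bound; see the proof of Lemma \ref{lem: characterization}. Your coercivity remark gives tightness of the $u$-marginal but not this integrability, and it must be made explicit. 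Finally, your ``drift is $o(1)$'' and frozen-coefficient approximations silently assume $b$, $a$, $r$, $l$ bounded; the paper justifies this through localization and a Girsanov change of measure (Assumption \ref{asmp: reduction} in Section \ref{sec: reduction}), which is harmless for an in-probability statement but belongs in the proof, and the freezing error is controlled multiplicatively (proportionally to the local cost, cf.\ \eqref{eqn: Jbar Jtilde}) rather than as an additive $o(1)$.
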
}

See Example \ref{ex: stochastic} for a closed form solution of $I$. 

%
%\subsection{About Assumption \ref{asmp: beta}}
%
%Now we explain briefly how to take into account the case where Assumption \ref{asmp: beta} does not hold. Define 
%\begin{equation*}\label{eqn: order of costs}
%\wt{\beta}_F=\frac{\beta_F}{\zeta_D +\alpha-\zeta_F}, \quad \wt{\beta}_P = \frac{\beta_P}{\zeta_D+\alpha-\zeta_P}, \quad \wt{\beta}_Q = \frac{\beta_Q}{\zeta_D + (\alpha-A)\zeta_Q}, 
%\end{equation*}
%and set $$\beta=\min \{\wt{\beta}_F, \wt{\beta}_P, \wt{\beta}_Q\}.$$ Then we have
%\begin{equation*}
%\liminf_{\varepsilon\to 0+} \frac{1}{\varepsilon^{\zeta_D \beta}}J^\varepsilon \geq_p  \int_0^T I(a_t, r_t, l_t, k_t, h_t; \wt{\beta}_F, \wt{\beta}_P, \wt{\beta}_Q)dt,
%\end{equation*}
%with
%\begin{align*}
%I(a, r, l, k, h; \wt{\beta}_F, \wt{\beta}_P, \wt{\beta}_Q)=\inf_{(\mu, \rho)} &\int_{\bbR^d_x\times \bbR^d_u} (rD(x) + \ind{\beta=\wt{\beta}_Q}lQ(u))\mu(dx\times du)\\
%& \qquad+\int_{\bbR^d_x\times \bbR^d_\xi} (\ind{\beta=\wt{\beta}_F}kF(\xi) + \ind{\beta=\wt{\beta}_P}hP(\xi))\rho(dx\times d\xi),
%\end{align*}
%meaning that only the cheapest control will be activated as $\varepsilon  \to 0$.  The proof is exactly the same and hence omitted.
\begin{rmk}[Upper bound]\label{upb}
It is natural to wonder whether the lower bounds in our theorems are tight and if it is the case, what are the strategies that attain them. In a forthcoming work,  we show that for the examples provided in Section \ref{sec: explicit I}, there are closed form strategies attaining asymptotically the lower bounds. For instance, in the case of combined regular and impulse control, it means that there exist $(u^{\varepsilon, *}, \tau^{\varepsilon, *}, \xi^{\varepsilon, *}) \in \mathcal{A}$ such that
\begin{equation*}
\lim_{\varepsilon\to 0} \frac{1}{\varepsilon^{\beta\zeta_D }} J^\varepsilon(u^{\varepsilon, *}, \tau^{\varepsilon, *}, \xi^{\varepsilon, *})  \to_p \int_0^T  I(a_t, r_t, l_t, k_t, h_t) dt.
\end{equation*}
These optimal strategies are essentially time-varying versions of the optimal strategies for the time-average control of Brownian motion.  
\end{rmk}

%%%%%%%%%%%%%%%%%%%%%%%%%%%%%%%%%%%%%%%%%%%%%%%%%%%%%%%%%%
%\newpage
\section{Interpretation of lower bounds and examples}\label{sec: explicit I}

Our goal in this section is to provide a probabilistic interpretation of the lower bounds in Theorems \ref{theo: lower bound}, \ref{theo: lower bound singular}, \ref{theo: lower bound impulse}, \ref{theo: lower bound singular only} and \ref{theo: lower bound stoch}, which are expressed in terms of linear programming. In particular, we want to connect them with the time-average control problem of Brownian motion. 
To our knowledge, there is no  general result available for the equivalence between time-average control problem and linear programming. Partial results exist in \cite{borkar1988ergodic, kurtz1998existence, kurtz1999martingale, kurtz2013linear, Helmes2014} but do not cover all the cases we need. Here we provide a brief self-contained study enabling us to also treat the cases of singular/impulse controls and their combinations with regular control. 
We first introduce controlled martingale problems and show that they can be seen as a relaxed version of the controlled Brownian motion \eqref{eqn: local controlled process}. Then we formulate the time-average control problem in this martingale framework. We finally show that this problem has an equivalent description in terms of infinite dimensional linear program. 
While essential ingredients and arguments for obtaining these results  are borrowed from \cite{kurtz1998existence} and \cite{kurtz2001stationary}, we provide sharp conditions which guarantee the equivalence of these  two formulations.  %We collect also a list of examples for which explicit expressions for $I$ are available. %\marginnote{preprint of T.Kurtz?}

%%%%%%%%%%%%%%%%%%%%%%%%%%%%%%%%%%%%%%%%%%%%%%%%%%%%%%%%%%

\subsection{Martingale problem associated to controlled Brownian motion}

In \cite{Soner2012, possamai2012homogenization, moreau2014trading, altarovici2013asymptotics}, the authors obtain a HJB equation in the first order expansion for the value function of the utility maximization problem under transaction costs, which essentially provides a lower bound for their control problems. They mention a connection between the HJB equation and the time-average control problem of Brownian motion, see also \cite{Hynd2012}. Here we wish to rigorously establish an equivalence between the linear programs in our lower bounds and the time-average control of Brownian motion. This leads us to introduce a relaxed version for the controlled Brownian motion. We shall see that the optimal costs for all these formulations coincide in the examples provided in the next section.\\

We place ourselves in the setting of \cite{kurtz2001stationary},  from which we borrow and rephrase several elements, and assume that the state space $E$ and control spaces $U$ and $V$ are complete, separable, metric spaces. Consider an operator $A: \calD \subset C_b(E)\to C(E\times U)$ and an operator $B: \calD \subset C_b(E)\to C(E\times V)$.

\begin{dfn}[Controlled martingale problem]
A triplet $(X, \Lambda, \Gamma)$ with $(X, \Lambda)$ an $E\times \scrP(U)$-valued process and $\Gamma$ an $\calL(E\times V)$-valued random variable is a solution of the controlled martingale problem for $(A, B)$ with initial distribution $\nu_0\in \scrP(E)$  if there exists a filtration $(\calF_t)$ such that the process $(X,\Lambda,  \Gamma_t)$ is $\calF_t$-progressive, $X_0$ has distribution $\nu_0$ and for every $f\in \calD$, 
\begin{equation}\label{eqn: mart}
f(X_t) - \int_0^t \int_U A f(X_s, u)\Lambda_s(du) ds - \int_{E\times V\times [0, t]} Bf(x, v)\Gamma(dx\times dv\times ds)
\end{equation}
is an $\calF_t$-martingale. 
\end{dfn}

We now consider two specific cases for the operators $A$ and $B$, which will be relevant in order to express our lower bounds. Furthermore, we explain why these specific choices of $A$ and $B$ are connected to combined regular and singular/impulse control of Brownian motion. 

\begin{ex}[Combined regular and impulse control of Brownian motion]\label{ex: A B impulse}
Let $  \calD = C^2_0(\bbR^d)\oplus \bbR$ and define $A: \calD \to C(E\times U)$ and $B:\calD\to C(E\times V)$ by 
\begin{align}
Af(x, u)& = \frac{1}{2}\sum_{i, j}a_{ij}\partial^2_{ij}f(x) + \langle u, \nabla f(x)\rangle, \label{eqn: operator A}\\
Bf(x, \xi) &=f(x+\xi) -f(x). \label{eqn: operator B impulse} 
\end{align}
Here $E =\bbR^d_x$, $U = \bbR^d_u$ and $V = \bbR^d_\xi\setminus \{0_\xi\}$. 
We call  any solution of this martingale problem the  combined regular and impulse control of Brownian motion. \\

Indeed, consider the following process
\begin{equation*}
X_t = X_0 + \sqrt{a}W_t +\int_0^tu_sds + \sum_{0<\tau_j\leq t}\xi_j,
\end{equation*}
with $(u_t)$ a progressively measurable process, $(\tau_j)$ a sequence of stopping times and $(\xi_j)$ a sequence of $\calF_{\tau_j}$-measurable random variables. Define
\begin{equation*}
N_t = \sum_{j} \ind{\tau_j \leq t},	\quad \xi_t = \xi_j, \quad t\in (\tau_{j-1}, \tau_j].
\end{equation*}
 Then for any $f\in \calD$, by It\=o's formula, 
\begin{align*}
f(X_t) -\int_0^t Af(X_s, u_s)ds &-\int_0^t Bf(X_{s-}, \xi_{s-}) dN_s\\
&=f(X_0) -\int_0^t \nabla f(X_s)^T \sqrt{a} dW_s,
\end{align*}
which is a martingale.
Let 
\begin{equation*}
\Lambda_t=\delta_{u_t}(du), \quad \Gamma (H\times [0, t]) = \int_0^t \mathbbm{1}_{H}(X_{s-}, \xi_{s-}) dN_s, \quad H\in\calB(\bbR^d_x\times\bbR^d_\xi\setminus \{0_\xi\}).
\end{equation*}
Then $(X, \Lambda, \Gamma)$ solves the martingale problem $(A, B)$ with initial distribution $\calL(X_0)$. 
\end{ex}

\begin{ex}[Combined regular and singular control of Brownian motion]\label{ex: A B singular}
Take $\calD = C^2_0(\bbR^d)\oplus \bbR$ and define 
\begin{align}
Af(x, u)& = \frac{1}{2}\sum_{i, j}a_{ij}\partial^2_{ij}f(x) + \langle u, \nabla f(x)\rangle, \label{eqn: operator A singular} \\
Bf(x, \gamma, \delta) &=
\begin{cases}
\langle\gamma, \nabla f(x)\rangle, \quad & \delta =0\\
\delta^{-1}\big(f(x+ \delta\gamma) - f(x)\big), \quad &\delta >0.
\end{cases}\label{eqn: operator B singular}
\end{align}
Here $E =\bbR^d_x$, $U = \bbR^d_u$ and $V = \Delta\times \bbR^+_{\delta}$. 
Any solution of this martingale problem is called combined regular and singular control of Brownian motion. \\

Indeed, let $X$ be given by
\begin{equation*}
X_t = X_0 + \sqrt{a}W_t +\int_0^tu_sds + \int_0^t \gamma_{s}d\varphi_s,
\end{equation*}
with $u$ a progressively measurable process, $\gamma_s\in \Delta$ and $\varphi_s$ non-decreasing. 
By It\=o's formula, we have
\begin{align*}
f(X_t) -\int_0^t Af(X_s, u_s)ds &-\int_0^t Bf(X_{s-}, \gamma_{s}, \delta \varphi_s) d\varphi_s\\
&=f(X_0) -\int_0^t \nabla f(X_s)^T \sqrt{a} dW_s,
\end{align*}
which is a martingale for any $f\in \calD$. 
Let 
\begin{equation*}
\Lambda_t=\delta_{u_t}(du), \quad \Gamma (H\times [0, t]) = \int_0^t \mathbbm{1}_{H}(X_{s-}, \gamma_{s}, \delta\varphi_s) d\varphi_s, \quad H\in\calB(\bbR^d_x\times \Delta\times \bbR^+_\delta).
\end{equation*}
Then $(X, \Lambda, \Gamma)$ solves the martingale problem $(A, B)$ with initial distribution $\calL(X_0)$. 
\end{ex}

\subsection{Time-average control of Brownian motion}\label{sec: ta control}

Now we formulate a relaxed version of the time-average control problem of Brownian motion in terms of a controlled martingale problem. This generalizes \cite{kurtz1998existence, Helmes2014} to combined regular and singular/impulse control of martingale problems, see also \cite{kurtz2013linear}. Recall that $A$ and $B$ are two operators where $A: \calD \subset C_b(E)\to C(E\times U)$ and $B: \calD \subset C_b(E)\to C(E\times V)$.
 Consider two cost functionals $C_A:E\times U \to \bbR_+$ and $ C_B:E\times V\to \bbR_+$. 

\begin{dfn}[Martingale formulation of time-average control problem] \label{def: ta MP}
The time-average control problem under the martingale formulation is given by
\begin{equation}\label{eqn: cost MP}
I^M = \inf_{(X,\Lambda, \Gamma)}\limsup_{t\to \infty}\frac{1}{t}\bbE\big[\int_0^t \int_U C_A(X_s, u)\Lambda_s(du)ds  + \int_{E\times V\times [0, t]} C_B(x, v) \Gamma(dx\times dv\times ds)\big],
\end{equation}
where the $\inf$ is taken over all solutions of the martingale problem $(A, B)$ with any initial distribution $\nu_0\in \scrP(E)$.
\end{dfn}
Now, let $(X, \Lambda, \Gamma)$ be any solution of the martingale problem with operators $A$ and $B$. Define $(\mu_t, \rho_t) \in \scrP(E\times U) \times \scrM(E\times V)$ as 
\begin{align}
\mu_t(H_1) &= \frac{1}{t}\bbE\Big[{\int_0^t\int_U \mathbbm{1}_{H_1}(X_s, u)\Lambda_s(du) ds}\Big], \label{eqn: occup mu}\\
\rho_t(H_2) &= \frac{1}{t}\bbE\big[\Gamma(H_2\times [0, t])\big],\label{eqn: occup rho}
\end{align}
for $H_1\in \calB(E\times U)$ and  $H_2\in \calB(E\times V)$. Then the average cost up to time $t$ in \eqref{eqn: cost MP}  can be expressed as
\begin{equation*}
\int_{E\times U} C_A(x, u)\mu_t(dx\times du) + \int_{E\times V} C_B(x, v)\rho_t (dx\times dv).
\end{equation*}
On the other hand, for $f\in \calD$, \eqref{eqn: mart} defines a martingale. Taking the expectation, we obtain
\begin{equation*}
\frac{1}{t}\bbE\Big[\int_0^t\int_UAf(X_s, u)\Lambda_s(du)ds  + \int_{E\times V\times [0, t]}Bf(x, v)\Gamma(dx\times dv\times ds)\Big] =\frac{1}{t} \bbE[f(X_t) - f(X_0)].
\end{equation*}
If $X_t$ is stationary, we have
\begin{equation*}
\int_{E\times U}Af(x, u)\mu_t(dx\times du) + \int_{E\times V}Bf(x, v)\rho_t(dx\times dv) = 0. 
\end{equation*}
Letting $t$ tend to infinity, this leads us to introduce the following linear programming  problem. 

\begin{dfn}[Linear programming (LP) formulation of time-average control]
The time-average control problem under the LP formulation is given by
\begin{equation}\label{eqn: cost LP}
I^P= \inf_{(\mu, \rho)} c(\mu, \rho),
\end{equation} 
with 
\begin{align}
c: \scrP(E\times U) \times \scrM(E\times V) &\to \bbR_+ \nonumber\\
(\mu, \rho)&\mapsto \int_{E\times U} C_A(x, u)\mu(dx\times du) + \int_{E\times V} C_B(x, v)\rho (dx\times dv),\label{eqn: cost functional c}
\end{align}
where the $\inf$ is computed over all $\mu\in \scrP(E\times U)$ and $ \rho\in \scrM(E\times V)$ satisfying the constraint
\begin{equation}\label{eqn: LP constraint}
 \int_{E\times U} Af(x, u)\mu(dx\times du) + \int_{E\times V} Bf(x, v)\rho(dx\times dv) = 0, \quad  \forall f\in \calD.
\end{equation}
\end{dfn}
We now present the theorem which connects linear programming and time-average control of Brownian motion. 

\begin{theo}[Equivalence between $I^M$ and $I^P$] \label{theo: equiv}
Assume that 
\begin{enumerate}
\item (Condition on the operators $A$ and $B$) The operators $A$ and $B$ satisfy Condition 1.2 in \cite{kurtz2001stationary}. In particular, there exist $\psi_A \in C(E\times U)$, $\psi_B\in C(E\times V)$,  and constants $a_f, b_f$ depending on $f\in \calD$ such that
\begin{equation} \label{eqn: Psi A B}
|Af(x, u)| \leq a_f \psi_A(x, u), \quad |Bf(x, v)|\leq b_f \psi_B(x, v),\quad \forall x\in E, u\in U, v\in V.
\end{equation}

\item 
(Condition on the cost function $C_A$) The cost function $C_A$ is non-negative and inf-compact, that is
$\{(x, u)\in E\times U \text{ }|\text{ } C_A(x, u) \leq c\}$  is a compact set for each $c\in \bbR_+$. In particular, $C_A$ is lower semi-continuous.

\item 
(Condition on cost function $C_B$) The cost function  $C_B$ is non-negative and lower semi-continuous. Moreover,  $C_B$ satisfies 
\begin{equation}\label{eqn: C B pos infty}
\inf_{(x, v) \in E\times V} C_B(x, v) > 0. 
\end{equation}

\item 
(Relation between operators and cost functions) There exist constants  $\theta$  and $0<\beta<1$ such that
\begin{equation}\label{eqn: bound psi c}
\psi_A(x, u)^{1/\beta} \leq \theta\big(1+ C_A(x, u)\big), \quad \psi_B(x, v)^{1/\beta} \leq  \theta C_B(x, v),
\end{equation}
for $\psi_A$ and $\psi_B$  given by \eqref{eqn: Psi A B}. 
\end{enumerate}
Then the two formulations above of the time-average control problem are equivalent in the sense that
\begin{equation*}
I^M = I^P.
\end{equation*}
\end{theo}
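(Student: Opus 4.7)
The plan is to prove the two inequalities $I^P \leq I^M$ and $I^M \leq I^P$ separately. The first direction is a weak-convergence argument on time-averaged occupation measures, while the second relies on the existence of a stationary solution to the controlled martingale problem, essentially in the spirit of \cite{kurtz2001stationary, kurtz1998existence}. Assumption~4 of the theorem (the Lyapunov-type bound \eqref{eqn: bound psi c}) is what will make both limiting arguments go through.

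For $I^P \leq I^M$, I fix $\eps>0$ and pick a solution $(X,\Lambda,\Gamma)$ of the controlled martingale problem with $\limsup_{t\to\infty} t^{-1}\bbE[\cdots] \leq I^M + \eps$, then choose $t_n\uparrow \infty$ along which the finite averages also converge to this limit. Setting $(\mu_{t_n},\rho_{t_n})$ as in \eqref{eqn: occup mu}-\eqref{eqn: occup rho}, the total mass of $\rho_{t_n}$ is bounded by $(I^M+2\eps)/\inf C_B$ thanks to \eqref{eqn: C B pos infty}, and inf-compactness of $C_A$ (Condition~2) gives tightness of $(\mu_{t_n})$ via the standard estimate $\mu_{t_n}(\{C_A>R\}) \leq R^{-1}\int C_A\,d\mu_{t_n}$. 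Tightness of $(\rho_{t_n})$ is obtained analogously once one uses the coercivity of $\psi_B$ from Condition~1.2 of \cite{kurtz2001stationary} together with \eqref{eqn: bound psi c}. Extracting a subsequence, $(\mu_{t_n},\rho_{t_n})\rightharpoonup(\mu,\rho)$; lower semi-continuity of $C_A, C_B$ gives
\begin{equation*}
c(\mu,\rho) \leq \liminf_n c(\mu_{t_n},\rho_{t_n}) \leq I^M + \eps.
\end{equation*}
Taking expectation in \eqref{eqn: mart} yields $\int Af\,d\mu_{t_n} + \int Bf\,d\rho_{t_n} = t_n^{-1}\bbE[f(X_{t_n})-f(X_0)]\to 0$ for any $f\in\calD$, since $f\in C_b(E)$. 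Passing this to the limit is the delicate point: $Af$ and $Bf$ are unbounded, but by \eqref{eqn: Psi A B} they are dominated by $\psi_A, \psi_B$, which by \eqref{eqn: bound psi c} are uniformly $L^{1/\beta}$-bounded against $(\mu_{t_n},\rho_{t_n})$ in view of the cost bound above. A truncation argument (replace $Af$ by $Af\,\ind{\psi_A\leq N}$ and send $N\to\infty$) then yields \eqref{eqn: LP constraint}, establishing LP-feasibility of $(\mu,\rho)$ and hence $I^P \leq I^M+\eps$.

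For $I^M \leq I^P$, I pick an $\eps$-optimal LP-feasible pair $(\mu,\rho)$ and invoke the existence theorem for stationary solutions of controlled martingale problems from \cite{kurtz2001stationary} (a variant of their Theorem~2.7). Under Condition~1.2 and the integrability \eqref{eqn: bound psi c}, this theorem produces a stationary process $(X,\Lambda,\Gamma)$ solving the martingale problem for $(A,B)$ whose associated occupation measures $(\mu_t,\rho_t)$ are independent of $t$ and equal to $(\mu,\rho)$. For this stationary solution the time-average cost appearing in \eqref{eqn: cost MP} equals $c(\mu,\rho) \leq I^P + \eps$ exactly, so $I^M \leq I^P + \eps$. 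Letting $\eps\to 0$ in both directions concludes.

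The main obstacle will be verifying the hypotheses of the Kurtz-Stockbridge stationary-existence theorem in the non-compact setting of Examples \ref{ex: A B impulse} and \ref{ex: A B singular}, where both the state and control spaces are Euclidean. The roles of $\psi_A$ and $\psi_B$ in Condition~1.2 of \cite{kurtz2001stationary} are then essential, and the coercivity condition \eqref{eqn: bound psi c} with exponent $1/\beta > 1$ is precisely what buys the uniform integrability needed to (i) extract weak limits with the desired mass and support, (ii) pass to the limit in the adjoint relation against unbounded test integrands $Af, Bf$, and (iii) run the stationary-solution construction on the LP side. Modulo this, the remaining pieces---Portmanteau for lower semi-continuity, truncation for the adjoint passage, and Fatou for the cost bound---are routine.
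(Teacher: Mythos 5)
Your overall architecture (weak limits of occupation measures for $I^P\leq I^M$, the Kurtz--Stockbridge stationary-existence theorem for $I^M\leq I^P$) is the same as the paper's, and the second direction as you sketch it is essentially the paper's proof verbatim (the paper cites Theorem 1.7 of \cite{kurtz2001stationary}). However, there is a genuine gap in your first direction: you claim tightness of $(\rho_{t_n})$ on $E\times V$ "analogously once one uses the coercivity of $\psi_B$ from Condition~1.2 of \cite{kurtz2001stationary} together with \eqref{eqn: bound psi c}". Condition~1.2 gives no coercivity of $\psi_B$, and \eqref{eqn: bound psi c} points the wrong way: it dominates $\psi_B$ by $C_B$, which would yield tightness only if $C_B$ were inf-compact --- and that is not assumed. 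In the paper's main examples it is false: for impulse control one takes $\psi_B\equiv 1$ and $C_B(x,\xi)=k\sum_i F_i\mathbbm{1}_{\{\xi^i\neq 0\}}+h\langle P,|\xi|\rangle$, which does not depend on $x$ at all, so measures such as $\delta_{(x_n,\xi_0)}$ with $|x_n|\to\infty$ have uniformly bounded cost and bounded mass but are not tight. Only the \emph{total masses} of the $\rho_{t_n}$ are controlled by \eqref{eqn: C B pos infty}; mass may still escape to infinity in the $x$ (or $\xi$) direction.

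The paper's fix, which you would need, is to compactify: view $\rho_{t_n}$ as elements of $\scrM(\overline{E\times V})$ with $\overline{E\times V}$ the one-point compactification, extend $C_B$ to the point at infinity by its liminf value, which is strictly positive by \eqref{eqn: C B pos infty}, so that the extended cost remains lower semi-continuous and is a tightness function on $\scrP(E\times U)\times\scrM(\overline{E\times V})$. A limit point then has the form $\bar{\rho}=\rho+\theta\delta_\infty$; the defect mass $\theta\delta_\infty$ can only increase the cost, so $c(\mu,\rho)\leq c(\mu,\bar\rho)\leq\liminf_n c(\mu_{t_n},\rho_{t_n})$, while the adjoint constraint passes to the limit for the restricted pair $(\mu,\rho)$ because $|Bf|^{1/\beta}\leq\theta_f C_B$ with $1/\beta>1$ makes $Bf$ uniformly integrable with respect to $(\rho_{t_n})$ (your truncation device works here, but applied on the compactified space). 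With this modification the rest of your argument --- Markov/inf-compactness tightness for $\mu_{t_n}$, Portmanteau for the cost, uniform integrability for the constraint --- is exactly the paper's proof.
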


We therefore write $I$ for both $I^M$ and $I^P$. 

\begin{proof}
 The idea of the proof is the same as in \cite{kurtz1998existence}, with a key ingredient provided by \cite[Theorem 1.7]{kurtz2001stationary}. \\

 We first show that $I^P\leq I^M$.  Given any solution $(X, \Lambda, \Gamma)$ of the martingale problem, we define the occupation measures as \eqref{eqn: occup mu} and \eqref{eqn: occup rho}. Without loss of generality, we can assume that $\limsup_{t\to\infty} c(\mu_t, \rho_t) <\infty$ where  $c$ is defined in  \eqref{eqn: cost functional c} (otherwise we would have $I^M=\infty \geq I^P$). We will show that $I^P\leq \limsup_{t\to\infty}c(\mu_t, \rho_t)$.\\
 
We consider the one-point compacification $\overline{E\times V} = E\times V \cup \{\infty = (x_\infty, v_\infty)\}$ and extend $C_B$ to $\overline{E\times V}$ by
\begin{equation*}
C_B(x_\infty, v_\infty) = \liminf_{(x, v) \to (x_\infty, v_\infty)} C_B(x, v) > 0,
\end{equation*}
where the last inequality is guaranteed by \eqref{eqn: C B pos infty}.
Since $C_B$ is lower semi-continuous,  the level sets $\{(x, v) \in \overline{E\times V} \text{ }| \text{ } C_B(x, v) \leq c\}$ are compact.  By Lemma \ref{lem: tightness function}, we deduce that $c$ is a tightness function on $\scrP(E\times U) \times \calM(\overline{E\times V})$. So the family of occupation measures $(\mu_t, \rho_t)_{t\geq 0}$ is tight if $\rho_t$ is viewed as a measure on $\overline{E\times V}$.  It follows that the family of occupation measures indexed by $t$ is relatively compact. \\
 
 Let $(\mu, \overline{\rho})\in \scrP(E\times U) \times \calM(\overline{E\times V})$ be any limit point of $(\mu_t, \rho_t)$ with canonical decomposition $\bar{\rho} = \rho + \theta_{\bar{\rho}} \delta_\infty$.  We claim that $(\mu, \rho)$  satisfies the linear constraint \eqref{eqn: LP constraint}.  Indeed, by \eqref{eqn: Psi A B} and \eqref{eqn: bound psi c},  we have
\begin{equation*}
|Af| ^{1/\beta} \leq\theta_f (1 +C_A), \quad |Bf|^{1/\beta} \leq \theta_f C_B, 
\end{equation*}
where $\theta_f$ is a non-negative real number depending on $f$. Then $\sup_t c(\mu_t, \rho_t)<\infty$ implies that $Af$ and $Bf$ are uniformly integrable with respect to $\mu$ and $\bar{\rho}$. We therefore have
\begin{equation*}
\int_{E\times U} Af d\mu + \int_{\overline{E\times V}} \mathbbm{1}_{E\times V} Bf d\bar{\rho} = \lim_{t\to \infty} \int_{E\times U} Afd\mu_t + \int_{\overline{E\times V}} \mathbbm{1}_{E\times V} Bf d\rho_t.
\end{equation*}
The right hand side being equal to zero, we obtain
\begin{equation*}
\int_{E\times U} Af d\mu + \int_{{E\times V}}  Bf d{\rho} = 0.
\end{equation*}
Since $C_A$ and $C_B$ are lower semi-continuous, it follows that (see \cite[Theorem A.3.12]{dupuis2011weak})
$$I^P\leq c(\mu, \rho) \leq c(\mu, \bar{\rho})\leq \liminf_{t\to\infty} c(\mu_t, \rho_t)\leq \limsup_{t\to\infty} c(\mu_t, \rho_t).$$
As the choice of the solution of the controlled martingale problem $(X, \Lambda,\Gamma)$ is arbitrary,  we conclude that $I^P\leq I^M$.\\

We now show that $I^M\leq I^P$. Given any $(\mu, \rho)$ satisfying the linear constraint \eqref{eqn: LP constraint} such that $c(\mu, \rho) < \infty$, Theorem 1.7 in \cite{kurtz2001stationary}  together with the condition on the operators $A$ and $B$ provides the existence of a stationary solution  $(X, \Lambda, \Gamma)$ for the martingale problem $(A, B)$ with marginals $(\mu, \rho)$, hence $I^M \leq I^P$.
 \end{proof}

%\begin{rmk}
%The conditions \eqref{eqn: Psi A B}-\eqref{eqn: bound psi c} are essential for Theorem \ref{theo: equiv} to hold. Indeed, we use these considiton to characterize the limit of the occupation measures $\mu$ and $\rho$. We will see that these conditions also play a similar role in the proof of Lemma \ref{lem: characterization}. 
%\end{rmk}
\begin{rmk} \label{rmk: relax C B}
Compared with the results in \cite{kurtz1998existence, borkar1988ergodic}, no near-monotone condition is necessary for the singular component. The is due to the fact that in the linear constraint \eqref{eqn: LP constraint}, $\rho$ belongs to $\calM(E\times V)$ instead of $\scrP(E\times V)$. 
\end{rmk}

We now give natural examples for which $I^P=I^M$. 
\begin{cor}[Time-average control of Brownian motion with quadratic costs]\label{cor: equiv}
Assume that $C_A$ is given by
\begin{equation}
C_A(x, u) = r \langle x, \Sigma^D x \rangle + l \langle u, \Sigma^Q u\rangle, \quad \forall (x, u)\in \bbR^d_x\times \bbR^d_u, 
\end{equation}
where $ \Sigma^D, \Sigma^Q\in \calS_d^+$. 
Then  for both following cases,  we have $I^M=I^P$:
\begin{enumerate}
\item
The operators $A$ and $B$ are  given by \eqref{eqn: operator A}-\eqref{eqn: operator B impulse} and the cost function $C_B$ is given by 
\begin{equation}\label{eqn: fixed prop cost}
C_B(x, \xi) =  k \sum_{i=1}^d F_i \mathbbm{1}_{\{\xi^i\neq 0\}} + h \langle P, |\xi|\rangle , \quad (x, \xi) \in \bbR^d_x\times \bbR^d_\xi\setminus\{0_\xi\},
\end{equation}
with $\min_i F_i > 0$.
\item 
The operators $A$ and $B$ are  given by \eqref{eqn: operator A singular}-\eqref{eqn: operator B singular} and the cost function $C_B$ is given by
\begin{equation}\label{eqn: explicit costs}
C_A(x, \gamma, \delta) = h \langle P,  |\gamma|\rangle, \quad (x, \gamma, \delta) \in \bbR^d_x\times \Delta \times \bbR^+_\delta,
\end{equation}
with  $\min_i P_i > 0$. 
\end{enumerate}

\end{cor}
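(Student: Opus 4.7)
The plan is to apply Theorem \ref{theo: equiv} directly, by verifying its four hypotheses in each of the two cases. In both cases, the candidate tightness functions are the same: $\psi_A(x,u) = 1 + |u|$ and $\psi_B \equiv 1$, with exponent $\beta = 1/2$. Once the hypotheses are checked, the conclusion $I^M = I^P$ follows immediately.

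I would first verify hypothesis 1 (the operator bounds \eqref{eqn: Psi A B}) and the standard separability/consistency requirements of Condition 1.2 in \cite{kurtz2001stationary}. For the operator $A$ given by \eqref{eqn: operator A} (or \eqref{eqn: operator A singular}) and any $f \in \mathcal{D} = C^2_0(\mathbb{R}^d)\oplus \mathbb{R}$, a direct estimate gives $|Af(x,u)| \leq \tfrac12 \mathrm{tr}(a)\|f\|_{C^2_0} + |u|\|f\|_{C^2_0}$, so the bound $|Af| \leq a_f \psi_A$ holds with $a_f$ proportional to $\|f\|_{C^2_0}$. For $B$ in the impulse case, $|Bf(x,\xi)| \leq 2\|f\|_\infty$. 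For $B$ in the singular case, a first-order Taylor expansion yields $\delta^{-1}(f(x+\delta\gamma)-f(x)) = \int_0^1 \langle \gamma, \nabla f(x+t\delta\gamma)\rangle dt$, so $|Bf(x,\gamma,\delta)| \leq \|\nabla f\|_\infty |\gamma|_2 \leq \|\nabla f\|_\infty$ uniformly in $\delta \geq 0$ (using $|\gamma|_1 = 1$ on $\Delta$). In both cases we may take $b_f = \|f\|_{C^2_0}$ and $\psi_B \equiv 1$. The same Taylor identity also gives continuity of $Bf$ across $\delta = 0$, handling the piecewise definition cleanly.

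Hypothesis 2 is immediate: $C_A(x,u) = r\langle x, \Sigma^D x\rangle + l\langle u, \Sigma^Q u\rangle$ is a continuous, positive definite quadratic form with $r,l > 0$ and $\Sigma^D, \Sigma^Q \in \calS^d_+$, so its sub-level sets are closed and bounded, hence compact. For hypothesis 3, lower semi-continuity of $C_B$ is clear: in the impulse case it follows from the lower semi-continuity of each $\mathbbm{1}_{\{\xi^i \neq 0\}}$, and in the singular case $C_B$ is continuous. The strict positivity is where the assumptions $\min_i F_i > 0$ and $\min_i P_i > 0$ enter: any $\xi \neq 0$ has at least one nonzero component, giving $C_B(x,\xi) \geq k \min_i F_i > 0$; on $\Delta$ we have $\sum_i |\gamma^i| = 1$ hence $C_B(x,\gamma,\delta) = h\langle P, |\gamma|\rangle \geq h \min_i P_i > 0$.

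Finally, for hypothesis 4 with $\beta = 1/2$, we have $\psi_A^{1/\beta} = (1+|u|)^2 \leq 2(1+|u|^2)$. Since $\Sigma^Q$ is positive definite, $C_A(x,u) \geq l\,\lambda_{\min}(\Sigma^Q)|u|^2$, so $\psi_A^{1/\beta} \leq \theta(1+C_A)$ for a suitable $\theta > 0$. Similarly, $\psi_B^{1/\beta} = 1 \leq \theta C_B$ with $\theta = (\inf C_B)^{-1}$, where the infimum is positive by hypothesis 3. With all four hypotheses established, Theorem \ref{theo: equiv} yields $I^M = I^P$ in both settings. The only delicate point is the treatment of the singular operator at $\delta = 0$ for the continuity and the Condition 1.2 requirements of \cite{kurtz2001stationary}; the Taylor representation above is the key computation that reconciles the two pieces of the piecewise definition of $Bf$ and ensures joint continuity in $(x,\gamma,\delta)$.
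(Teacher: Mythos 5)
Your proposal is correct and takes essentially the same route as the paper: both proofs simply verify the four hypotheses of Theorem \ref{theo: equiv}, choosing $\psi_A(x,u)=1\vee\sum_i|u_i|$ (equivalently $1+|u|$), $\psi_B\equiv 1$ and $\beta=1/2$, using the quadratic growth of $C_A$ for inf-compactness and the bound $\psi_A^{2}\leq\theta(1+C_A)$, and using $\min_i F_i>0$ (resp.\ $\min_i P_i>0$) for \eqref{eqn: C B pos infty} and $\psi_B^{2}\leq\theta C_B$. The only point you compress is the pre-generator part of Condition 1.2 in \cite{kurtz2001stationary} (the paper notes that $A_u f(x)=Af(x,u)$ and $B_\xi f(x)=Bf(x,\xi)$ satisfy the positive maximum principle, hence are dissipative), which you fold into ``standard consistency requirements''; spelling that out would complete the verification, while your Taylor identity for $Bf$ across $\delta=0$ in the singular case is an explicit detail the paper leaves implicit.
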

\begin{proof}
We consider the impulse case.  First,  we show that $A$ and $B$ satisfy \cite[Condition 1.2]{kurtz2001stationary}.  (i) It is clear that $\mathbbm{1}_{\bbR^d}\in \calD$, and $A\mathbbm{1}_{\bbR^d}= 0, B\mathbbm{1}_{\bbR^d}= 0$. (ii) Define 
\begin{equation*}
\psi_A(x, u) =1 \vee \sum_{i=1}^d |u_i| , \quad \psi_B(x, \xi) =1, 
\end{equation*}
then \eqref{eqn: Psi A B} is satisfied. (iii) Since $C_0^2(\bbR^d_x)$ equipped with $\|\cdot\|_{C_0^2}$ is separable, the third condition is satisfied. (iv) $A_uf(x) = Af(x, u)$ and $B_\xi f(x) = Bf(x, \xi)$ satisfy the positive maximum principle, so they are dissipative. It is obvious that they verify \cite[(1.10)]{kurtz2001stationary}. Hence they are pre-generators.  (v) Obvious.  Second, since $C_A$ and $C_B$ are l.s.c. and $\min_i F_i >0$, the conditions on $C_A$ and $C_B$ are verified.   Third, \eqref{eqn: bound psi c} holds with $\beta = 1/2$. \\

The proof for the singular case is similar. Note that $\min_i P_i >0 $ is necessary for \eqref{eqn: C B pos infty} and the second bound in \eqref{eqn: bound psi c} to hold. 
\end{proof}

%\begin{rmk}
%Using randomized control argument as in \cite{karoui2013capacities}, it might be possible to proof the equivalence of relaxed martingale formulation and SDE formulation. But this is not in the scope of the current paper. 
%\end{rmk}

\subsection{Explicit examples in dimension one} %\note{rewrite!!!}
We collect here a comprehensive list of examples in dimension one for which explicit solutions are available. 
Most of these results exist already in the literature under the classical SDE formulation (see for example \cite{dai2013browniana, jack2006impulse, jack2006singular, Jack2006a,  Soner2012, altarovici2013asymptotics, moreau2014trading, gobet2012almost, Helmes2014}), {\color{black}but Examples \ref{ex: combined impulse} and \ref{ex: combined singular} are presented here for the first time.} The basic idea is to solve explicitly the HJB equation corresponding to the time-average control problem and apply a verification theorem. Similar methods apply under the linear programming framework. However, for completeness we provide in Section \ref{sec: verification} detailed proofs tailored to the formulation in terms of linear programming. In fact, we prove only the case of combined regular and impulse control, that is Example \ref{ex: combined impulse}. The proofs for the other examples are similar and hence omitted.

\begin{ex}[Regular control of Brownian motion] \label{ex: stochastic}
  Let $r, l >0$ and consider the following linear programming problem
\begin{equation}\label{cost ex: stoch}
I(a, r, l) = \inf_{\mu} \int_{\bbR_x\times \bbR_u} (rx^2 + lu^2) \mu(dx, du),
\end{equation}\label{lp ex: stoch}
where $\mu \in \scrP(\bbR_x\times \bbR_u)$ satisfies
\begin{equation}\label{lp ex: stoch}
\int_{\bbR_x\times\bbR_u} \big(\frac{1}{2}af''(x)+ uf'(x)\big) \mu(dx, du) = 0, \quad \forall f\in C_0^2(\bbR_x). 
\end{equation}
By Corollary \ref{cor: equiv}, this is equivalent to the time-average control of Brownian motion with quadratic costs in the sense of Definition \ref{def: ta MP}.\\

Let us explain heuristically how to obtain the optimal solution {\color{black}(a rigorous verification argument for the linear programming formulation is provided in Section \ref{sec: verification}).}  Roughly speaking, Definition \ref{def: ta MP} describes the following dynamics
\begin{equation*}
dX^u_t = \sqrt{a} dW_t  + u_t dt.
\end{equation*}
The optimization objective is
\begin{equation*}
\inf_{(u_t)\in \mathcal{A}}\limsup_{T\to \infty} \frac{1}{T} \mathbb{E}\Big[\int_0^T (r (X^u_t)^2 + l u_t^2) dt \Big], 
\end{equation*}
{\color{black}where the set $\mathcal{A}$ of admissible controls contains all progressively measurable processes $(u_t)$ such that 
$$
\limsup_{T\to \infty} \mathbb E[(X^u_T)^2] <\infty
$$}
Consider the associated HJB equation
\begin{equation*}
\inf_{u\in \bbR_u} \frac{1}{2}a w''(x) + uw'(x) + l u^2 + rx^2 - I^V = 0,
\end{equation*}
{\color{black}where the constant $I^V$ must be found as part of the solution.}
It is easy to find the explicit solution (see also \cite[Equation (3.18)]{moreau2014trading}):
\begin{equation*}
w(x) = \sqrt{rl} x^2, \quad I^V = \sqrt{a^2 rl}.
\end{equation*}
{\color{black}Now, let $(u_t)$ be an admissible control and apply It\=o's formula to $w(X_T)$:}
\begin{equation*}
w(X_T) 
= w(X_0) + \int_0^T \Big(\frac{1}{2}a w''(X_t) + u_t w'(X_t)\Big) dt + \int_0^T w'(X_t) dW_t.
\end{equation*}
It follows that
\begin{align}
\int_0^T (rX_t^2 + lu_t^2)dt &\geq \int_0^T \Big(I^V - \frac{1}{2}a w''(X_t) - u_t w'(X_t)\Big) dt \label{eqn: geq hjb}\\
& =TI^V + w(X_0) - w(X_T) +  \int_0^T w'(X_t) dW_t.\nonumber
\end{align}
Taking expectation, dividing by $T$ on both sides, {\color{black}and using the admissibility conditions,} we obtain
\begin{equation*}
\limsup_{T\to \infty} \frac{1}{T} \mathbb{E}\Big[\int_0^T (r X_t^2 + l u_t^2) dt \Big]\geq I^V. 
\end{equation*}
{\color{black}To show that $I^V$ is indeed the optimal cost, it is enough to show that equality holds in \eqref{eqn: geq hjb}. for the optimal feedback control given by }
\begin{equation*}
u^*(x) = -\frac{w'(x)}{2l} = -\theta x,~\theta=\sqrt{\frac{r}{l}}.
\end{equation*}
Therefore, the optimally controlled process is an Ornstein-Uhlenbeck process 
\begin{equation*}
dX^* =\sqrt{a}dW_t -\theta X_t^* dt. 
\end{equation*}
Naturally, the stationary distribution of $(X^*, u^*(X_t^*))$ is the solution $\mu^*$ of the linear programming problem. We have the following result.

\begin{prop}\label{lem: stochastic}
 The solution of \eqref{cost ex: stoch}-\eqref{lp ex: stoch} is given by
\begin{equation*}
I(a, r, l) = \sqrt{a^2rl},
\end{equation*}
and the optimum is attained by
\begin{equation*}
\mu^*(dx, du)  = \frac{1}{\sqrt{2\pi} \sigma} e^{-\frac{x^2}{2 \sigma^2}} dx\otimes \delta_{\{-\theta x\}}(du), 
\end{equation*}
where
\begin{equation*}
\sigma^2 = \frac{a}{2\theta}, \quad \theta =\sqrt{\frac{r}{l}}.
\end{equation*}
\end{prop}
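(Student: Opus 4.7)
My strategy is to prove the proposition by two matching inequalities, translating a classical linear-quadratic verification argument into the occupation-measure language of the LP formulation.

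For the lower bound, I would use the quadratic $w(x)=\sqrt{rl}\,x^2$, which formally solves the Hamilton--Jacobi--Bellman equation associated to the control problem with eigenvalue $a\sqrt{rl}$. Since $w\notin C_0^2(\bbR)$, I first restrict attention to those $\mu$ with finite cost $c(\mu):=\int(rx^2+lu^2)\,d\mu<\infty$, which automatically implies $\int x^2\,d\mu<\infty$, $\int u^2\,d\mu<\infty$, and hence $\int|xu|\,d\mu<\infty$ by Cauchy--Schwarz. Smooth cut-offs $w_N=w\chi_N$ with $\chi_N=1$ on $[-N,N]$ and $\chi_N',\chi_N''$ uniformly bounded produce admissible test functions $w_N\in C_0^2(\bbR)$, and a dominated-convergence argument extends \eqref{lp ex: stoch} to $w$, yielding
$$0=\int Aw\,d\mu=a\sqrt{rl}+2\sqrt{rl}\int xu\,\mu(dx\times du),\qquad\text{i.e.}\qquad \int xu\,d\mu=-\tfrac{a}{2}.$$
Completing the square in the pointwise identity $rx^2+lu^2=l(u+\theta x)^2-2\sqrt{rl}\,xu$ and integrating against $\mu$ then gives $c(\mu)=\int l(u+\theta x)^2\,d\mu+a\sqrt{rl}\geq\sqrt{a^2 rl}$, with equality iff $u=-\theta x$ $\mu$-almost surely.

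For the upper bound, I will check directly that the candidate $\mu^*$ in the statement is admissible and attains cost $\sqrt{a^2 rl}$. Since $\mu^*$ is concentrated on the graph $\{u=-\theta x\}$, the operator $A$ acts on functions of $x$ alone as the generator $Lf(x)=\tfrac12 af''(x)-\theta x f'(x)$ of the Ornstein--Uhlenbeck process $dX^*_t=-\theta X_t^*\,dt+\sqrt{a}\,dW_t$, whose invariant density is precisely $N(0,\sigma^2)$ with $\sigma^2=a/(2\theta)$. A standard integration by parts yields $\int Lf\,d\mu^*=0$ for every $f\in C_0^2(\bbR)$, so $\mu^*$ satisfies \eqref{lp ex: stoch}. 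Its cost computes to $c(\mu^*)=(r+l\theta^2)\sigma^2=2r\sigma^2=ar/\theta=\sqrt{a^2 rl}$, matching the lower bound.

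The only delicate technical point will be the cut-off step justifying that the LP constraint, stated only over $f\in C_0^2(\bbR)$, extends to the unbounded quadratic $w$; once this is handled via the \emph{a priori} second-moment control provided by $c(\mu)<\infty$, the remaining verification is routine for a linear-quadratic problem.
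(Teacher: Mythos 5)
Your proof is correct and follows essentially the same route as the paper: it is the verification argument of Lemma \ref{lem: verification} specialized to the quadratic HJB solution $w(x)=\sqrt{rl}\,x^2$ with $I^V=\sqrt{a^2rl}$ (your completion of the square is exactly the pointwise identity $Aw(x,u)+rx^2+lu^2-I^V=l(u+\theta x)^2\geq 0$, with equality iff $u=-\theta x$), combined with the check that the Ornstein--Uhlenbeck invariant measure paired with the feedback $u^*(x)=-\theta x$ satisfies the constraint \eqref{lp ex: stoch} and attains the value. The one point to state precisely is the cut-off: choose $\chi_N(x)=\chi(x/N)$ so that $|\chi_N'|\leq C/N$ and $|\chi_N''|\leq C/N^2$, which gives $|A(w\chi_N)(x,u)|\leq C'(1+x^2+|x||u|)\leq C''\big(1+C_A(x,u)\big)$ uniformly in $N$ so that dominated convergence only requires the second moments provided by $c(\mu)<\infty$; with merely bounded $\chi_N'$ the term $u\,w\,\chi_N'$ is of order $|u|x^2$, which finite cost does not control.
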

\end{ex}

 \begin{ex}[Singular control of Brownian motion]  \label{ex: singular}
 For any parameters $r, h> 0$, consider the following linear programming problem
\begin{equation}\label{cost ex: singular}
I(a, r, h)= \inf_{(\mu, \rho)} \int_{\bbR_x} rx^2 \mu(dx) + \int_{\bbR_x \times\{\pm 1\}\times \bbR^+_\delta} h |\gamma| \rho(dx, d \gamma, d\delta),
\end{equation}
where $\mu\in \scrP(\bbR_x)$ and $\rho\in \calM(\bbR_x\times\{\pm 1\}\times {\bbR}^+_\delta)$ satisfy
\begin{equation}\label{lp ex: singular}
\int_{\bbR_x} \frac{1}{2} af''(x)  \mu(dx) + \int_{\bbR_x \times\{\pm 1\}\times \bbR^+_\delta} \gamma f'(x) \rho(dx, d \gamma, d \delta) = 0, \quad \forall f\in C_0^2(\bbR_x).
\end{equation}
By Corollary \ref{cor: equiv}, this is equivalent to the time-average control of Brownian motion with quadratic deviation penalty  and proportional costs  in the sense of Definition \ref{def: ta MP}. \\

The dynamics of the solution of the controlled martingale problem is heuristically
\begin{equation*}
dX_t = \sqrt{a}dW_t + \gamma_t d\varphi_t, 
\end{equation*}
where $\gamma_t\in \{\pm 1\}$ and $\varphi_t$ is a non-decreasing process. The optimization objective is
\begin{equation*}
\inf_{(\gamma_t, \varphi_t)\in \mathcal{A}}\limsup_{T\to \infty} \frac{1}{T} \mathbb{E}\Big[\int_0^T r X_t^2  dt + h\varphi_T \Big]. 
\end{equation*}
The associated HJB equation is 
\begin{equation*}
(\frac{1}{2}aw''(x) + r x^2 - I^V) \wedge (\inf_{\gamma \in \{\pm 1\}} \gamma w'(x) +  h)=0.
\end{equation*}
An explicit solution for $w$ is provided in \cite{Soner2012} (see also \cite{jack2006singular, karatzas1983class, dai2013browniana}):
\begin{equation}
w(x) = 
\begin{cases}
Ax^4 + B x^2, &-U \leq x \leq U, \\
w(-U) + h(-U - x), & x< -U, \\
w(U) + h(x - U), & x > U, 
\end{cases}
\end{equation}
with
\begin{equation*}
A = -\frac{1}{6} \frac{r}{a}, \quad B = \frac{I}{a},
\end{equation*}
and
\begin{equation*}
I =  (\frac{3}{4}ar^{1/2}h)^{2/3}, \quad U =  (\frac{3}{4}ar^{-1}h)^{1/3}.
\end{equation*}
The optimally controlled process is 
\begin{equation*}
dX_t^* = \sqrt{a}dW_t  + d\varphi_t^- - d\varphi_t^+, 
\end{equation*}
where $\varphi^{\pm}$ are the local times keeping $X^*_t \in [-U, U]$  such that
\begin{equation*}
\int_0^t \mathbbm{1}_{\{X_s^*\neq-U\}} d\varphi^-_s + \int_0^t \mathbbm{1}_{\{X_s^*\neq U\}} d\varphi^+_s = 0. 
\end{equation*}
In other words,  this is a Brownian motion with reflection on the interval $[-U, U]$. The optimal solution $(\mu^*, \rho^*)$ is the stationary distribution of  $X_t^*$ and the limit of boundary measures
\begin{equation*}
\frac{1}{t}\int_0^t (\delta_{(U, -1, 0)} d\varphi^{+}_s +\delta_{(-U, 1, 0)} d\varphi^{-}_s) , 
\end{equation*}
as $t\to \infty$. We have the following result.
\begin{prop} \label{lem: singular}
The solution of \eqref{cost ex: singular}-\eqref{lp ex: singular} is given by 
\begin{equation*}
I (a, r, h)=  (\frac{3}{4}ar^{1/2}h)^{2/3}.
\end{equation*}
and the optimum is attained by
\begin{align}
\mu^*(dx) & = \frac{1}{2U}\mathbbm{1}_{[-U, U]}(x)dx, \label{eqn: pi reflection} \\
 \rho^*(dx, d \gamma, d \delta)& = \frac{a}{2U}\Big(\frac{1}{2}\delta_{(-U, 1, 0)} +\frac{1}{2} \delta_{(U, -1, 0)}\Big), \label{eqn: rho reflection}
\end{align}
where 
\begin{equation*}
U =  (\frac{3}{4}ar^{-1}h)^{1/3}.
\end{equation*}
\end{prop}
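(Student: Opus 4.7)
The plan is to establish $I(a,r,h) = I^V := (\tfrac{3}{4} a r^{1/2} h)^{2/3}$ by a verification argument for the linear program, mirroring the approach announced for Example~\ref{ex: combined impulse} in Section~\ref{sec: verification}. For the lower bound, I would use the candidate value function $w$ displayed immediately before the statement. By the smooth-fit matching at $\pm U$, $w$ is $C^1$ on $\bbR$ and piecewise $C^2$, and a direct calculation on the three pieces yields the two key inequalities
\begin{equation*}
\tfrac{1}{2} a w''(x) + r x^2 \geq I^V \quad \text{for all } x, \qquad |w'(x)| \leq h \quad \text{for all } x,
\end{equation*}
with equality in the first on $[-U,U]$ (by construction of $A$ and $B$) and in the second outside $[-U,U]$. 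The second inequality makes $w$ Lipschitz with constant $h$, so that for both pieces of the singular operator one obtains $h|\gamma| + B w(x,\gamma,\delta) \geq 0$: at $\delta = 0$ via $h + \gamma w'(x) \geq h - |w'(x)| \geq 0$, and at $\delta > 0$ via $w(x+\delta\gamma) - w(x) \geq -h\delta$. Given any feasible $(\mu,\rho)$ with finite cost, substituting $w$ into the constraint \eqref{lp ex: singular} (modulo the regularization discussed below) gives
\begin{equation*}
\int r x^2\, \mu(dx) + \int h|\gamma|\, \rho(dx\times d\gamma\times d\delta) = \int \bigl(r x^2 + \tfrac{1}{2} a w''\bigr) d\mu + \int \bigl(h|\gamma| + B w\bigr) d\rho \geq I^V.
\end{equation*}

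For the upper bound I would verify directly that $(\mu^*,\rho^*)$ given in \eqref{eqn: pi reflection}--\eqref{eqn: rho reflection} is feasible and attains $I^V$. Feasibility is a one-line integration by parts: for any $f \in C^2_0(\bbR)$,
\begin{equation*}
\int \tfrac{1}{2} a f''(x)\, \mu^*(dx) = \frac{a}{4U}\bigl(f'(U) - f'(-U)\bigr),
\end{equation*}
which is exactly cancelled by $\int \gamma f'(x)\, \rho^*(dx\times d\gamma\times d\delta) = \frac{a}{4U}(f'(-U) - f'(U))$. The cost under $(\mu^*,\rho^*)$ equals $\frac{r U^2}{3} + \frac{h a}{2 U}$, which, upon substituting $U^3 = \frac{3 a h}{4 r}$, collapses to $r U^2 = I^V$.

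The main technical point is that \eqref{lp ex: singular} is stated only for test functions in $C^2_0$, whereas $w$ grows linearly at infinity and has $w''$ merely bounded with jump discontinuities at $\pm U$. I would handle this by mollifying $w$ to a smooth $w_\eta$ and multiplying by a smooth cutoff $\chi_R$ equal to $1$ on $[-R/2,R/2]$ and vanishing outside $[-R,R]$. The finite-cost hypothesis gives $\int x^2\, \mu(dx) < \infty$, and the total mass of $\rho$ is finite by Remark~\ref{rmk: relax C B} since $\inf C_B \geq h > 0$; these supply the dominated convergence needed to let $R \to \infty$ and $\eta \to 0$ while preserving both the constraint identity and the pointwise inequalities on $w_\eta \chi_R$ in the limit, exactly as in the verification for Example~\ref{ex: combined impulse}.
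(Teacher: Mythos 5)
Your proposal is correct and follows essentially the same route as the paper: the paper omits the proof of this proposition, stating it is analogous to the combined regular and impulse case, i.e.\ the verification Lemma \ref{lem: verification} applied with the explicit $w$ from Example \ref{ex: singular} (HJB inequalities $\tfrac12 a w''+rx^2\ge I^V$ and $|w'|\le h$, cutoff/mollification to pass from $C^2_0$ test functions, finite mass of $\rho$ from $C_B\ge h>0$), plus the direct feasibility and cost computation for $(\mu^*,\rho^*)$, which is exactly what you do. One small remark: here the smooth fit is second order, $w''(\pm U^\mp)=0$, so $w\in C^2(\bbR)$ and the concern about jump discontinuities of $w''$ at $\pm U$ does not actually arise, which only simplifies your regularization step.
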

\end{ex}

\begin{ex}[Impulse control of Brownian motion] \label{ex: impulse}
For any parameters $r, k >0$ and $h \geq 0$,  consider  the following linear programming problem
\begin{equation}\label{cost ex: impulse}
I (a, r, k, h)= \inf_{(\mu, \rho)} \int_{\bbR_x} rx^2 \mu(dx) + \int_{\bbR_x\times\bbR_\xi\setminus\{0_{\xi}\}} (k+h|\xi|) \rho(dx, d\xi), 
\end{equation}
where $\mu\in \scrP(\bbR_x)$ and $\rho \in \calM(\bbR_x\times \bbR_\xi\setminus\{0_\xi\})$ satisfy
\begin{equation}\label{lp ex: impulse}
\int_{\bbR_x} \frac{1}{2}a f''(x)  \mu(dx) + \int_{\bbR_x\times \bbR_\xi\setminus\{0_\xi\}} \big(f(x+\xi) - f(x)\big) \rho(dx, d\xi)=0, \quad \forall f\in C_0^2(\bbR). 
\end{equation}
By Corollary \ref{cor: equiv}, this is equivalent to the time-average control problem of Brownian motion  in the sense of Definition \ref{def: ta MP}.\\ 

The associated HJB equation is (see also \cite{dai2013browniana, jack2006impulse, altarovici2013asymptotics, gobet2012almost})
\begin{equation*}
(\frac{1}{2}aw''(x) +  r x^2 - I^V) \wedge (\inf_{\xi\in \bbR_\xi \setminus \{0_\xi\}}( w(x+\xi) + k + h|\xi|) - w(x))=0.
\end{equation*} Let $\theta_1, \theta_2$ and  $0< \wt{x}^*< x^*$ be solutions of the following polynomial system
\begin{equation*}\label{eqn: explicit f+p}
\begin{cases}
6a\theta_1  + r = 0, \\
P(x^*) - P(\wt{x}^*) = k + h(x^*-\wt{x}^*), \\
P'(x^*) = h, \quad 
P'(\wt{x}^*)= h, 
\end{cases}
\end{equation*}
where $P(x) = \theta_1 x^4 + \theta _2 x^2$. Let $U=x^*$.
We can show that  the solution of the HJB equation is given by
\begin{equation*}
w(x) =
\begin{cases}
P(x), &|x|\leq U, \\
w(U)+ h(|x|-U), & |x|>U, 
\end{cases}
\end{equation*}
and
\begin{equation*}
I^V = a \theta_2.
\end{equation*}
Let  
$\xi^*=x^*- \wt{x}^*$. The optimally controlled process is a Brownian motion on the interval $[-U, U]$, which jumps to $\pm U \mp\xi^*$ when reaching the boundary point $\pm U$. Such processes have been studied in \cite{ben2009ergodic, grigorescu2002brownian}. We have the following result.

\begin{prop}\label{lem: impulse}
 The  solution of \eqref{cost ex: impulse}-\eqref{lp ex: impulse} is given by 
\begin{equation*}
I(a, r, k, h) = a \theta_2, 
\end{equation*}
and the optimum is attained by 
\begin{align}
\mu^* &= p(x)dx, \quad p(x) = \begin{cases}
\frac{1}{(x^*)^2 - (\wt{x}^*)^2}(x + x^*), & -x^* \leq x < -\wt{x}^*,\\
\frac{1}{x^*+\wt{x}^*}, & -\wt{x}^* \leq x\leq \wt{x}^*, \\
\frac{1}{(x^*)^2 - (\wt{x}^*)^2}( x^*-x), & \wt{x}^* < x\leq x^*,
\end{cases}\label{eqn: pi jump}\\
\rho^* &= \frac{a}{(x^*)^2 - (\wt{x}^*)^2}(\frac{1}{2}\delta_{(x^*, \wt{x}^* - x^*)}+\frac{1}{2}\delta_{(-x^*,- \wt{x}^* + x^*)}).\label{eqn: rho jump}
\end{align}
which correspond to the stationary distribution and boundary measure of Brownian motion with jumps from the boundary on the interval $[-U, U]$. 
\end{prop}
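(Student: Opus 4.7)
The plan is to prove Proposition \ref{lem: impulse} by a verification argument built on the candidate HJB value function $w$, which serves as a dual certificate for the lower bound, and on the explicit pair $(\mu^*, \rho^*)$, which I will show is admissible and achieves the cost $I^V := a\theta_2$. By construction, on $[-U,U]$ the polynomial $w = P$ satisfies $\tfrac{1}{2} a w''(x) + rx^2 = a\theta_2$ (using $6a\theta_1 + r = 0$), and outside $[-U,U]$, $w$ is affine so the inequality $\tfrac{1}{2} a w''(x) + rx^2 - a\theta_2 \geq 0$ reduces to $rx^2 \geq a\theta_2$, which holds on $|x|>U$ by a direct check from the polynomial system. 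The conditions $P'(\pm x^*) = \pm h$ and $P'(\pm \wt{x}^*) = \pm h$, together with the indifference equation $P(x^*) - P(\wt{x}^*) = k + h(x^* - \wt{x}^*)$, guarantee the second HJB inequality $w(x+\xi) - w(x) + k + h|\xi| \geq 0$ for all $(x,\xi)$, with equality precisely at $(\pm x^*, \mp(x^* - \wt{x}^*))$.

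For the lower bound, let $(\mu, \rho)$ be any admissible pair with finite cost, so that $\int x^2\,\mu(dx) < \infty$ and $\int (1+|\xi|)\,\rho(dx,d\xi) < \infty$. Since $w \notin C_0^2(\bbR)$, multiply by a smooth cutoff $\chi_R$ equal to $1$ on $[-R,R]$ and apply the constraint \eqref{lp ex: impulse} to $w\chi_R \in C_0^2(\bbR)$. The bounds $|w(x)|\lesssim 1+x^2$, $\|w''\|_\infty < \infty$ (noting that $w \in C^1(\bbR)$ with $w''$ bounded piecewise), and $|w(x+\xi) - w(x)| \leq \|w'\|_\infty |\xi|$ allow one to pass $R \to \infty$ by dominated convergence, obtaining $\int \tfrac{1}{2} a w''\,d\mu + \int (w(x+\xi) - w(x))\,d\rho = 0$. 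Adding the two HJB inequalities integrated against $\mu$ and $\rho$ respectively then yields $\int rx^2\,\mu(dx) + \int (k+h|\xi|)\,\rho(dx,d\xi) \geq a\theta_2$.

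For the upper bound, a direct computation gives $\int p(x)\,dx = 1$, so $\mu^*$ is a probability measure. To check that $(\mu^*, \rho^*)$ satisfies the linear constraint, integrate any $f \in C_0^2(\bbR)$ twice by parts against the piecewise linear density $p$: the derivative $p'$ is constant on $(-x^*,-\wt{x}^*)$, zero on $(-\wt{x}^*,\wt{x}^*)$, and constant on $(\wt{x}^*,x^*)$, while $p$ vanishes at $\pm x^*$, so the boundary terms produce exactly $-\frac{a}{2((x^*)^2 - (\wt{x}^*)^2)}[(f(\wt{x}^*) - f(x^*)) + (f(-\wt{x}^*) - f(-x^*))]$, which cancels the atomic contribution of $\rho^*$ in \eqref{lp ex: impulse}. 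To evaluate the cost at $(\mu^*, \rho^*)$, use the constraint itself: $\int rx^2\,\mu^*(dx) = a\theta_2 - \int \tfrac{1}{2} a w''\,d\mu^* = a\theta_2 + \int(w(x+\xi) - w(x))\,\rho^*(dx,d\xi)$, and on the two atoms of $\rho^*$ one has $w(x+\xi) - w(x) = P(\wt{x}^*) - P(x^*) = -(k + h|\xi|)$, so the total cost reduces to $a\theta_2$. The main technical obstacle is the justification of using the non-compactly-supported $w$ as a test function in the lower-bound step (the cutoff-and-limit argument relies crucially on the finiteness of cost); the remaining verifications, including the algebraic identities encoded in the polynomial system and the integrability computations for $\mu^*$ and $\rho^*$, are explicit.
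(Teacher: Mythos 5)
Your overall route is the same as the paper's: a verification argument in which the HJB solution $w$ serves as a dual certificate for the lower bound, combined with a direct check that the explicit pair $(\mu^*,\rho^*)$ satisfies the constraint and attains $a\theta_2$ (the paper carries this out in detail only for the combined regular/impulse case, via Lemma \ref{lem: verification} and Lemma \ref{lem: value function}, and declares the pure impulse case analogous). Your upper-bound half is sound: the integration by parts against the piecewise linear density, the cancellation with the atoms of $\rho^*$, and the evaluation of the cost through the constraint together with the identities $\tfrac12 a w''+rx^2=a\theta_2$ on $(-U,U)$ and $Bw+k+h|\xi|=0$ at the atoms are exactly the paper's equality case.

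The genuine gap is in the lower bound: you apply the constraint \eqref{lp ex: impulse} to $w\chi_R$, asserting $w\chi_R\in C_0^2(\bbR)$. This is false. By smooth fit $w$ is $C^1$ at $\pm U$, but $w''$ jumps there from $P''(x^*)<0$ to $0$, so $w\chi_R$ is not an admissible test function and the constraint cannot be invoked for it as written. Repairing this needs a further mollification $w*\varphi_\delta$ and, crucially, control of what happens if $\mu$ charges $\{-U,U\}$, where the mollified second derivatives do not converge to $w''$; this is precisely why the paper's Lemma \ref{lem: verification} carries the hypothesis $\mu(N\times\bbR^d_u)=0$ and why its proof builds test functions $f_n$ with $f_n''\to \ind{x}$ to show that any admissible $\mu$ with finite cost is atomless (alternatively one can argue that the HJB inequality holds for both one-sided values of $w''$ at $\pm U$, hence for any convex combination arising in the limit, but some such step must appear). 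Two smaller points: your claim that finite cost forces $\int(1+|\xi|)\,d\rho<\infty$ fails when $h=0$, which the proposition allows; the correct domination is $|Bw|\le \theta\,(k+h|\xi|)$, valid because $w$ has bounded oscillation on $[-U,U]$ and slope $\pm h$ outside. Finally, the two HJB inequalities you assert without proof ($rx^2\ge a\theta_2$ for $|x|>U$, and the global intervention inequality with equality only at $(\pm x^*,\mp(x^*-\wt{x}^*))$) are true but require the shape facts $\wt{x}^*<\sqrt{a\theta_2/r}<x^*$ (whence $r(x^*)^2>a\theta_2$) and a monotonicity analysis of $g(x)=w(x)-hx$ on $[0,x^*]$; these play the role of conditions \eqref{eqn: cond A}--\eqref{eqn: cond B} in the paper's treatment of the combined case and should be spelled out.
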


\end{ex}

\begin{ex}[Combined regular and impulse control of Brownian motion]\label{theo: sol combined impulse}\label{ex: combined impulse}
For any parameters $r, l, k >0$ and $h \geq 0$,  consider  the following linear programming problem
\begin{equation}\label{cost ex: impulse combined}
I (a, r, l, k, h)= \inf_{(\mu, \rho)} \int_{\bbR_x\times\bbR_u} (rx^2 + l u^2) \mu(dx, du) + \int_{\bbR_x\times\bbR_\xi\setminus\{0_{\xi}\}} (k+h|\xi|) \rho(dx, d\xi), 
\end{equation}
where $\mu\in \scrP(\bbR_x\times \bbR_u)$ and $\rho \in \calM(\bbR_x\times \bbR_\xi\setminus\{0_{\xi}\})$ satisfy
\begin{equation}\label{lp ex: impulse combined}
\int_{\bbR_x\times \bbR_u} \big(\frac{1}{2}a f''(x) + uf'(x)\big) \mu(dx, du) + \int_{\bbR_x\times \bbR_\xi\setminus\{0_{\xi}\}} \big(f(x+\xi) - f(x)\big) \rho(dx, d\xi)=0, \quad \forall f\in C_0^2(\bbR). 
\end{equation}
By Corollary \ref{cor: equiv}, this is equivalent to the time-average control problem of Brownian motion  in the sense of Definition \ref{def: ta MP}. \\

The corresponding HJB equation is 
\begin{equation*}
(\frac{1}{2}aw''(x) + \inf_u  (uw'(x) + lu^2 ) + r x^2 - I^V) \wedge \big(\inf_{\xi}( w(x+\xi) + k + h|\xi|) - w(x)\big)=0.
\end{equation*}
In Section \ref{sec: verification}, we show that this equation admits a classical solution
\begin{equation}\label{eqn: val combined ctrl}
w(x) =
\begin{cases}
  (rl)^{1/2} x^2 - 2 a l \ln \Kummer(\frac{1-\iota}{4}; \frac{1}{2};\Big( \frac{r}{a^2 l}\Big ) ^{1/2}x^2), & |x|\leq U,\\
  w(U)+ h(|x|-U) , &|x|> U,
  \end{cases}
\end{equation}
where $\Kummer$ is the Kummer confluent hypergeometric function (see Section \ref{sec: kummer})
and $\xi^*$ and $U$ are such that $0 <\xi^* < U$ and
\begin{equation*}
w(\pm U \mp \xi^*)  + k + h|\xi^*| - w(\pm U) = 0. 
\end{equation*}
Moreover,
\begin{equation*}
I^V = \iota a \sqrt{rl},
\end{equation*}
for some $\iota\in(0, 1)$.  \\

Let $u^*$ be defined as
\begin{equation*}\label{eqn: ctrl u}
u^*(x)=\underset{u}{\text{Argmin }} Aw(x, u) + C_A(x, u) = -\frac{w'(x)}{2l}.
\end{equation*}
The optimally controlled process is given by
\begin{equation*}
dX_t^* = \sqrt{a}dW_t + u^*(X^*_t)dt + d\Big(\sum_{\tau_j \leq t}\big(\mathbbm{1}_{\{X^*_{\tau_j-} = -U\}} \xi^* - \mathbbm{1}_{\{X^*_{\tau_j-} = U\}} \xi^*\big)\Big). 
\end{equation*}
We have the following result.

\begin{prop} \label{lem: combined impulse}
 The  solution of \eqref{cost ex: impulse combined}-\eqref{lp ex: impulse combined} is given by  
\begin{equation*}
I(a, r, l, k, h) = \iota a\sqrt{rl}, 
\end{equation*}
and  the optimum is attained by $(\mu^*, \rho^*)$ where 
\begin{align*}
\mu^*(dx, du) &= p^*(x)dx\otimes \delta_{u^*(x)}(du), \\
 \rho^*(dx, d\xi) & =( \rho^*_- \delta_{(-U, \xi^*)}+ \rho^*_+ \delta_{(U, -\xi^*)})(dx,  d\xi),
\end{align*}
with $p^*(x) \in C^0([L, U])\cap C^2([L, U]\setminus \{L+\xi^*(L), U + \xi^*(U)\})$ solution of 
\begin{equation*}
\begin{cases}
&\frac{1}{2}a p''(x) -(u^*(x)p(x))' =0, \quad x\in (-U, U)\setminus \{-U+ \xi^*,  U - \xi^*\},\\
&p(-U) = p(U) = 0, \\
&\frac{1}{2}a p'((-U)+) = p'((-U+ \xi^*
)-)) -\frac{1}{2}a (p'((-U+ \xi^*)+ ) , \\
&\frac{1}{2}a p'(U-) =    p'((U-\xi^*)+)) -\frac{1}{2}a(p'((U- \xi^*)-), \\
&\int_{-U}^U p(x) = 1,
\end{cases}
\end{equation*}
and $ \rho^*_-, \rho^*_+\in \bbR^+$  given by 
\begin{equation*}\label{eqn: boundary mes jump}
\rho^*_- = \frac{1}{2}a p'((-U)+) , \quad \rho^*_+ = -\frac{1}{2}a p'(U-) .
\end{equation*}
Note that $\mu^*$ and $\rho^*$ are the stationary distribution and boundary measure of the optimally controlled process $X_t^*$. 
\end{prop}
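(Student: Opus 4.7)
The plan is to follow a verification-style argument adapted to the linear programming formulation, exploiting the candidate value function $w$ of \eqref{eqn: val combined ctrl}. I would organize the proof in three stages: establishing the lower bound via HJB inequalities, constructing the density $p^*$ from a stationary Fokker--Planck equation, and verifying that $(\mu^*, \rho^*)$ is feasible and attains the bound.

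For the lower bound, I would first record the two pointwise HJB inequalities that $w$ satisfies by construction: $A^a w(x,u) + C_A(x,u) \geq I^V$ for all $(x,u)\in\bbR_x\times\bbR_u$, with equality on $\{(x, u^*(x)): |x|\leq U\}$, and $Bw(x,\xi) + C_B(x,\xi) \geq 0$ for all $(x,\xi)\in\bbR_x\times\bbR_\xi\setminus\{0_\xi\}$, with equality at $(\pm U, \mp\xi^*)$. The first is immediate on $[-U,U]$ from the ODE satisfied by $w$ and for $|x|>U$ follows from $w$ being affine with slope $\pm h$. The second amounts to checking that the function $\xi\mapsto w(x+\xi) + k + h|\xi| - w(x)$ is nonnegative, which one verifies via the defining equations for $\xi^*$ and $U$. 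Given any feasible $(\mu,\rho)$ with $c(\mu,\rho)<\infty$, one extends the LP constraint \eqref{lp ex: impulse combined} from $C^2_0(\bbR_x)$ to $w$ through a cutoff approximation $w_n = w\phi_n$ with $\phi_n\uparrow 1$. The linear growth of $w$ and $w'$, combined with the moment bounds $\int x^2 d\mu \leq c(\mu,\rho)/r$ and $\int |\xi| d\rho \leq c(\mu,\rho)/h$ (when $h>0$; the case $h=0$ requires using $\int k d\rho <\infty$ and the boundedness of $Bw$), allow passage to the limit. Integrating the two HJB inequalities against $\mu$ and $\rho$ and using $\int A^a w\, d\mu + \int Bw\, d\rho=0$ yields $c(\mu,\rho) \geq I^V = \iota a \sqrt{rl}$.

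For the attainment part, the density $p^*$ is obtained by solving the stationary Fokker--Planck equation $\tfrac12 a p'' - (u^* p)' = 0$ piecewise on the three intervals $(-U,-U+\xi^*)$, $(-U+\xi^*, U-\xi^*)$, $(U-\xi^*, U)$. Because $u^*(x)=-w'(x)/(2l)$ is smooth on $[-U,U]$, each piece admits an explicit two-parameter family of solutions; the six constants are fixed by the absorbing boundary conditions $p^*(\pm U)=0$, continuity of $p^*$ at the two interior points $\pm U\mp\xi^*$, the two derivative-jump conditions in the statement, and the normalization. The boundary weights are then read off from $\rho^*_- = \tfrac12 a (p^*)'((-U)+)$ and $\rho^*_+ = -\tfrac12 a (p^*)'(U-)$. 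To verify that $(\mu^*,\rho^*)$ satisfies \eqref{lp ex: impulse combined}, I would integrate $(A^a f)(x,u^*(x)) p^*(x)$ by parts twice on each of the three subintervals; the boundary contributions at $\pm U$ vanish via $p^*(\pm U)=0$, and the interior contributions at $\pm U\mp\xi^*$ collect into exactly $\rho^*_-(f(-U+\xi^*)-f(-U)) + \rho^*_+(f(U-\xi^*)-f(U))$, which cancels $\int Bf\, d\rho^*$. Finally, since $(\mu^*,\rho^*)$ is supported where the two HJB inequalities are saturated, the same identity $\int A^a w\, d\mu^* + \int Bw\, d\rho^* = 0$ applied to $w$ yields $c(\mu^*,\rho^*) = I^V$.

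The main obstacle is the explicit construction and analysis of $p^*$. The ODE involves the logarithmic derivative of a Kummer function through $u^*$, so the three pieces cannot be written in fully closed form, yet one must still show that the algebraic system for the six integration constants has a unique admissible solution with $p^* \geq 0$ and normalizable. A secondary technical issue is justifying the extension of the LP constraint to the function $w$, which is not compactly supported; this requires exploiting the quadratic deviation cost to control tails of $\mu$ uniformly along the cutoff approximation. Once these two points are settled, the rest of the argument reduces to careful integration by parts and matching coefficients.
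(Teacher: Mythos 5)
Your overall strategy coincides with the paper's: a verification argument in the linear programming formulation (Lemma \ref{lem: verification}) based on the value function $w$ of Lemma \ref{lem: value function}, i.e.\ the two HJB inequalities integrated against a feasible $(\mu,\rho)$ after extending the constraint \eqref{lp ex: impulse combined} from $C^2_0$ to $w$ by cutoff, followed by the construction of $p^*$ from the stationary Fokker--Planck system \eqref{eqn: adjoint pde jump} and an integration by parts showing that $(\mu^*,\rho^*)$ is feasible and saturates both inequalities. On the attainment side your plan is sound and, if anything, more explicit than the paper about solving for the six piecewise constants.

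There is, however, one genuine gap in the lower-bound part: $w$ is only $C^1$ on $\bbR$ and $C^2$ away from $N=\{-U,U\}$ ($w''$ jumps there, cf.\ \eqref{eqn: cond A}), so your claim that $A^aw(x,u)+C_A(x,u)\geq I^V$ holds ``for all $(x,u)$'' is ill-posed at $x=\pm U$, and, more importantly, your cutoff functions $w_n=w\phi_n$ are not elements of $C^2_0(\bbR_x)$ and hence are not admissible test functions in \eqref{lp ex: impulse combined}. To plug them in you must mollify, and to pass to the limit in the mollification you need to know that $\mu$ puts no mass on $\{-U,U\}\times\bbR_u$; this is exactly condition~1 of Lemma \ref{lem: verification}, and the paper proves it by a separate argument (test functions $f_n(z)=n^{-2}\varphi(n(z-x))$ with $f_n''\to\mathbbm{1}_{\{x\}}$, showing that any feasible pair with finite cost gives an atomless $x$-marginal for $\mu$). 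Your proposal identifies the non-compact support of $w$ as the only obstruction, which the quadratic cost indeed handles, but it misses this atomlessness step; without it neither $\int A^aw\,d\mu$ nor the limit $\int Aw_{n,\delta}\,d\mu\to\int Aw_n\,d\mu$ is justified. A related minor point: for $|x|>U$ the inequality $A^aw+C_A\geq I^V$ is not automatic from $w$ being affine; it uses $rU^2-h^2/(4l)\geq I^V$, which follows from evaluating \eqref{eqn: pde} at $U-$ together with $w''(U-)<0$, so the concavity condition \eqref{eqn: cond A} should be invoked explicitly.
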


\end{ex}

\begin{ex}[Combined regular and singular control of Brownian motion]\label{ex: combined singular}
For any parameters $r, l, h> 0$, consider the following linear programming problem
\begin{equation}\label{cost ex: singular combined}
I = \inf_{(\mu, \rho)} \int_{\bbR_x\times \bbR_u} (rx^2 + lu^2) \mu(dx, du) + \int_{\bbR_x \times\{\pm 1\}\times \bbR^+_\delta} h |\gamma| \rho(dx, d \gamma, d\delta),
\end{equation}\label{lp ex: singular combined}
where $\mu\in \scrP(\bbR_x\times\bbR_u)$ and $\rho\in \calM(\bbR_x\times\{\pm 1\}\times {\bbR}^+_\delta)$ satisfy
\begin{equation} 
\int_{\bbR_x\times \bbR_u} \big(\frac{1}{2} af''(x) + uf'(x)\big) \mu(dx, du) + \int_{\bbR_x \times\{\pm 1\}\times \bbR^+_\delta} \gamma f'(x) (dx, d \gamma, d \delta) = 0, \quad \forall f\in C_0^2(\bbR_x).
\end{equation}
Again by Corollary \ref{cor: equiv}, this is equivalent to the time-average control problem of Brownian motion in the sense of Definition \ref{def: ta MP}. \\

The constant $I^V$ and $w:\bbR\to \bbR$ exist with the same expressions as in Example \ref{ex: combined impulse}. Similarly, we have the following result. 
\begin{prop}\label{lem: combined singular}
The solution of \eqref{cost ex: singular combined}-\eqref{lp ex: singular combined} is given by
$$I = \iota a \sqrt{rl},$$ 
and the optimum is attained by $(\mu^*, \rho^*)$ where 
\begin{align*}
\mu^*(dx, du)& = p^*(x)dx\otimes \delta_{u^*(x)}(du), \\ \rho^*(dx, d \gamma, d \delta) &=( \rho^*_- \delta_{(-U,1, 0)}+ \rho^*_U \delta_{(U, -1, 0)})(dx, d\gamma, d\delta),
\end{align*}
with $p^*(x) \in  C^2([-U, U])$ solution of %\note{todo}
\begin{equation}\label{eqn: adjoint pde reflection}
\begin{cases}
&\frac{1}{2}a p''(x) -(u^*(x)p(x))' =0, \quad x\in (-U, U),\\
&\frac{1}{2}a p'(x) + u^*(x) p(x) = 0, \quad x\in\{-U, U\},  \\
&\int_{-U}^U p(x) = 1,
\end{cases}
\end{equation}
and   $ \rho^*_-, \rho^*_+\in \bbR^+$ given by
\begin{equation}\label{eqn: boundary mes reflection}
 \rho_-^* =\frac{1}{2}a p(-U),\quad  
\rho_+^*= \frac{1}{2}a p(U).
\end{equation}
\end{prop}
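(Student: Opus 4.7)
The proof follows the same scheme as Proposition \ref{lem: combined impulse}, replacing the impulse boundary behaviour (jump by $\xi^*$) by smooth-fit reflection. The plan is to match a lower bound $I^V = \iota a \sqrt{rl}$ obtained from the HJB solution $w$ against the explicit feasible pair $(\mu^*, \rho^*)$, showing it attains this value. First, I would check that the HJB equation
\[
\Bigl(\tfrac{1}{2} a w''(x) + \inf_{u\in\bbR} (u w'(x) + l u^2) + r x^2 - I^V\Bigr) \wedge \Bigl(\inf_{\gamma \in \{\pm 1\}} \gamma w'(x) + h\Bigr) = 0
\]
is satisfied by the same $w$ as in Example \ref{ex: combined impulse}, where the free boundary $U$ is now determined by the smooth-fit conditions $w'(\pm U) = \pm h$ and $w''(\pm U) = 0$, since the fixed-cost term is absent. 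In particular, $w \in C^2(\bbR)$ and $|w'(x)| \leq h$ for every $x \in \bbR$.

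For the lower bound, let $(\mu, \rho)$ be any LP-feasible pair with $c(\mu, \rho) < \infty$. I would approximate $w$ by $w_n \in C_0^2(\bbR)$ via a smooth cutoff on $[-n, n]$, substitute $w_n$ into the constraint \eqref{lp ex: singular combined}, and pass to the limit using the integrability guaranteed by $c(\mu, \rho) < \infty$. The HJB inequalities then give
\[
A^a w(x, u) \geq I^V - r x^2 - l u^2, \qquad B w(x, \gamma, \delta) \geq -h|\gamma|,
\]
where the first is immediate and the second distinguishes $\delta = 0$ (using $|\gamma w'(x)| \leq h$) from $\delta > 0$ (using the mean value theorem together with $|w'| \leq h$). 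Integrating against $\mu$ and $\rho$ and invoking the constraint then yields $c(\mu, \rho) \geq I^V$.

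For the upper bound, I would verify that $(\mu^*, \rho^*)$ is LP-feasible and satisfies $c(\mu^*, \rho^*) = I^V$. Feasibility is established by a double integration by parts on $[-U, U]$: for any $f \in C_0^2(\bbR)$, the interior contribution
\[
\int_{-U}^U \bigl(\tfrac{1}{2} a f''(x) + u^*(x) f'(x)\bigr) p^*(x) \, dx
\]
vanishes by the adjoint Fokker-Planck equation satisfied by $p^*$, and the resulting boundary terms at $\pm U$ are exactly cancelled by the singular contribution $\int B f \, d\rho^* = \rho^*_- f'(-U) - \rho^*_+ f'(U)$ once $\rho^*_\pm$ and the stated boundary conditions on $p^*$ are used. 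The identity $c(\mu^*, \rho^*) = I^V$ follows because on the support of $\mu^*$ the first HJB inequality is saturated at $u = u^*(x)$, while on the support of $\rho^*$, concentrated on $\{(\pm U, \mp 1, 0)\}$, the second HJB inequality is saturated by the smooth-fit relation $w'(\pm U) = \pm h$.

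The main technical obstacle is the approximation step in the lower bound: $w$ has linear growth outside $[-U, U]$, so a careful cutoff argument is needed to preserve the HJB inequalities uniformly in $n$ while ensuring that the error terms vanish in the limit. This is handled as in the companion verification for Example \ref{ex: combined impulse}, exploiting the quadratic penalty in $C_A$ and the strict positivity of $C_B$ on its domain to control the tails of $\mu$ and $\rho$.
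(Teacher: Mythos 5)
Your proposal is correct and follows essentially the same route as the paper, which proves only the combined regular and impulse case (Theorem \ref{theo: sol combined impulse} via the verification Lemma \ref{lem: verification} and the value function of Lemma \ref{lem: value function}) and declares the present case "similar": you use the same $w$ with the $k=0$ smooth-fit boundary ($w'(\pm U)=\pm h$, $w''(\pm U)=0$, so $w\in C^2$ and condition 1 of Lemma \ref{lem: verification} is vacuous), the same cutoff approximation $w_n=w\varphi_n$ controlled by the quadratic $C_A$ and the lower bound on $C_B$, and the same Fokker--Planck integration by parts for feasibility and saturation of $(\mu^*,\rho^*)$. One caution: when you actually perform the double integration by parts, cancellation of the $f(\pm U)$ boundary terms requires the zero-flux condition $\tfrac12 a p^{*\prime}(\pm U)-u^*(\pm U)p^*(\pm U)=0$ (equivalently $p^*\propto e^{-w/(al)}$), so carry out this computation explicitly rather than quoting the boundary condition in \eqref{eqn: adjoint pde reflection} verbatim, whose sign appears to be a misprint.
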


\end{ex}

\begin{rmk}[Higher dimension examples] 
To our knowledge, examples with closed-form solutions for time-average control of Brownian motion in higher dimension are not available except \cite{gobet2012almost, moreau2014trading, altarovici2013asymptotics, Janecek2010, Guasoni2015}. 
\end{rmk}
\section{Relation with utility maximization under market frictions}\label{sec: util}

As we have already mentioned, the lower bound \eqref{eqn: lower bound} appears also in the study of impact of small market frictions in the framework of utility maximization, see \cite{kallsen2013general, kallsen2013portfolio, guasoni2012dynamic, Soner2012, possamai2012homogenization, altarovici2013asymptotics, moreau2014trading, liu2014rebalancing}. 
In this section, we explain heuristically how to relate utility maximization under small market frictions to the problem of tracking. It should be pointed out that we are just making connections between these two problems and no equivalence is rigorously established. \\

We follow the presentation in \cite{kallsen2013general} and consider the classical utility maximization problem
\begin{equation*}
u(t, w_t) = \sup_{\varphi}\bbE[U(w^{t, w_t}_T)],
\end{equation*}
with
\begin{equation*}
w^{t,w_t}_s= w_t+ \int_t^s \varphi_udS_u, 
\end{equation*}
where $\varphi$ is the trading strategy. The market dynamics is an It\=o semi-martingale 
\begin{equation*}
dS_t  = b^S_t dt+ \sqrt{a^S_t}dW_t.
\end{equation*}
In the frictionless market, we denote by $\varphi^*_t$ the optimal strategy and  by $w^*_t$ the corresponding wealth process. As mentioned in \cite{kallsen2013general}, the indirect marginal utility $u'(t, w^*_t)$ evaluated along the optimal wealth process is a martingale density, which we denote by $Z_t$:
\begin{equation*}
Z_t = u'(t, w^*_t).
\end{equation*}
Note that $S$ is a martingale under $\bbQ$ with
\begin{equation*}
\frac{d\bbQ}{d\bbP} =\frac{Z_T}{Z_0}. 
\end{equation*}
 One also defines the indirect risk tolerance process $R_t$ by  %\note{Quicker way for $R$ ? D. Kramkov ?}
\begin{equation*}
R_t = -\frac{u'(t, w^*_t)}{u''(t, w^*_t)}.
\end{equation*}
Consider the exponential utility function as in \cite{kallsen2013portfolio}, that is
\begin{equation*}
U(x) = -e^{-p x}, \quad p >0.
\end{equation*}
Then we have
\begin{equation*}
R_t =R=\frac{1}{p}.
\end{equation*}

\noindent In a market with proportional transaction costs, the portfolio dynamics is given by 
\begin{equation*}
w^{t, w_t, \varepsilon}_s= w_t^\varepsilon + \int_t^s\varphi^\varepsilon_udS_u - \int_t^s\varepsilon h_u d\|\varphi^\varepsilon\|_u,
\end{equation*}
where $h_t$ is a random weight process and $\varphi^\varepsilon_t$ a process with finite variation. The control problem is then
\begin{equation*}
u^{\varepsilon}(t, w_t) = \sup_{\varphi^\varepsilon}\bbE{[U(w^{t, w_t,\varepsilon}_T)]}.
\end{equation*}
When the cost $\varepsilon$ is small, we can expect that $\varphi^\varepsilon_t$ is close to $\varphi^*_t$ and set
\begin{equation*}
\Delta w^\varepsilon_T := w^{0, w_0,\varepsilon}_T - w^*_T = \int_0^T (\varphi^\varepsilon_t - \varphi^*_t) dS_t - \varepsilon\int_0^T h_t d\|\varphi^\varepsilon\|_t.
\end{equation*}

Then up to first order quantities, we have
\begin{align*}
u^\varepsilon - u 
&= \E{U(w^*_T + \Delta w^\varepsilon_T)} - \E{U(w^*_T)}\\
&\simeq \bbE[{U'(w^*_T) \Delta w^\varepsilon_T + \frac{1}{2}U''(w^*_T)(\Delta w^\varepsilon_T)^2}]\\
&= -u'(w_0) \bbE^\bbQ[{\Delta w^\varepsilon_T +\frac{1}{2R}(\Delta w^\varepsilon_T)^2}]\\
&\simeq -u'(w_0) \bbE^\bbQ[{\varepsilon\int_0^Th_td\|\varphi^\varepsilon\|_t + \frac{1}{2R}({\int_0^T(\varphi^\varepsilon_t - \varphi^*_t)dS_t})^2}]\\
&= -u'(w_0) \bbE^\bbQ[{\varepsilon\int_0^Th_td\|\varphi^\varepsilon\|_t + \int_0^T\frac{a^S_t}{2R}(\varphi^\varepsilon_t - \varphi^*_t )^2dt}].
\end{align*}
%where 
%\begin{equation*}
%\Delta^\varepsilon_t:=\frac{c_t^{RS}}{c^S_tR_t}\int_0^t (\varphi^\varepsilon_s - \varphi^*_s) dS_s.
%\end{equation*}
%For the last equality, see \cite[pp. 27-28]{kallsen2013general}. 

Similarly, in a market with fixed transaction costs $\varepsilon k_t$, see \cite{altarovici2013asymptotics}, the portfolio dynamics is given by 
\begin{equation*}
w^{t, w_t, \varepsilon}_s= w_t^\varepsilon + \int_t^s\varphi^\varepsilon_udS_u + \Psi_s - \varepsilon\sum_{t < \tau^\varepsilon_j \leq s} k_{\tau^\varepsilon_j} F(\xi^\varepsilon_j)  , \quad \varphi^\varepsilon_t =\sum_{0 <\tau^\varepsilon_j \leq t} \xi^\varepsilon_{j},
\end{equation*}
and we have
\begin{equation*}
u^\varepsilon - u \simeq -u'(w_0) \bbE^\bbQ[{\varepsilon \sum_{0 <\tau^\varepsilon_j \leq T} k_{\tau^\varepsilon_j}F(\xi^\varepsilon_j)+ \int_0^T\frac{a^S_t}{2R}(\varphi^\varepsilon_t - \varphi^*_t )^2dt}].
\end{equation*}

Finally, in a market with linear impact $\varepsilon l_t$ on price, see \cite{Rogers2007, moreau2014trading}, the portfolio dynamics is given by 
\begin{equation*}
w^{t, w_t, \varepsilon}_s= w_t^\varepsilon + \int_t^s\varphi^\varepsilon_udS_u - \varepsilon\int_t^s l_u (u^\varepsilon_u)^2du, \quad \varphi^\varepsilon_t = \int_0^t u^\varepsilon_tdt,
\end{equation*}
and we have
\begin{equation*}
u^\varepsilon - u \simeq -u'(w_0) \bbE^\bbQ[{\varepsilon\int_0^Tl_t(u^\varepsilon	_t)^2dt + \int_0^T\frac{a^S_t}{2R}(\varphi^\varepsilon_t - \varphi^*_t)^2dt}].
\end{equation*}
\

To sum up, utility maximization under small market frictions is heuristically equivalent to the tracking problem if the deviation penalty is set to be 
\begin{equation}\label{eqn: lambda}
 r_t D(x) = \frac{a_t^S}{2R} x^2.
\end{equation}
Defining the certainty equivalent wealth loss $\Delta^\varepsilon$ by 
\begin{equation*}
u^\varepsilon =: u(w_0 - \Delta^\varepsilon),
\end{equation*}
it follows that 
\begin{equation}\label{correspondence}
\frac{1}{\varepsilon^{\beta\zeta_D}} \Delta^\varepsilon \simeq 	 \bbE^\bbQ[\int_0^T I_t dt],
\end{equation}
see also \cite[Equation (3.4)]{kallsen2013general} and \cite[pp. 18]{moreau2014trading}.

\begin{rmk}[Higher dimension and general utility function]
For the case of higher dimension and general utility function,  one should set 
\begin{equation}\label{eqn: r R}
r_t D(x) = \frac{1}{2R_t} \langle x, a^S_t x\rangle.
\end{equation}
In other words,  utility maximization under market frictions can be approximated at first order by the problem of tracking with quadratic deviation cost \eqref{eqn: r R}. Thus one can establish a connection between the tracking problem and the utility maximization problems  in \cite{altarovici2013asymptotics, moreau2014trading, Guasoni2015, Guasoni2015a}.
\end{rmk}

\begin{rmk}[General cost structures]
When there are multiple market frictions with comparable impacts, the choice of deviation penalty is the same as \eqref{eqn: r R} and one only needs to adjust the cost structure. Our results apply directly in these cases, see \cite{liu2014rebalancing, Guasoni2015a}. For example, in the case of trading with proportional cost and linear market impact, see \cite{liu2014rebalancing}, the local problem is the time-average control of Brownian motion with cost structure
\begin{equation*}
C_A(x, u) = rx^2 + lu^2 +h|u|.
\end{equation*}
Indeed, Equations (4.3)-(4.5) in \cite{liu2014rebalancing} give rise to a verification theorem for the HJB equation of the time-average control problem of Brownian motion under this cost structure. 
\end{rmk}

\begin{rmk}[Non-zero interest rate]
In the case of non-zero interest rate, the correspondence should be written as
\begin{equation}\label{correspondence r}
\frac{1}{\varepsilon^{\beta\zeta_D}} \Delta^\varepsilon \simeq 	 \bbE^\bbQ[\int_0^T e^{-r}I_t dt],
\end{equation}
where $e^{-rt}S_t$ is a $\mathbb{Q}$-martingale and the tracking problem is defined by \eqref{eqn: r R}. For example, the right hand side of \eqref{correspondence r} is the probabilistic representation under Black-Scholes model of Equation (3.11) in \cite{Whalley1997}. 
\end{rmk}

\begin{rmk}[Optimal consumption over infinite horizon]
In \cite{Soner2012, possamai2012homogenization}, the authors consider the problem of optimal consumption over infinite horizon under small proportional costs. Their results can be related to the tracking problem in the same way, that is
\begin{equation*}
\frac{1}{\varepsilon^{\beta\zeta_D}} \Delta^\varepsilon \simeq 	 \bbE^\bbQ[\int_0^\infty e^{-r}I_t dt],
\end{equation*}
where $e^{-rt}S_t$ is a $\mathbb{Q}$-martingale and the tracking problem is defined by \eqref{eqn: r R}. 
\end{rmk}

\section{Proof of Theorem \ref{theo: lower bound}}\label{sec: proof}

This section is devoted to the proof of Theorem \ref{theo: lower bound}. In Section \ref{sec: reduction}, we  first rigorously establish the arguments outlined in Section \ref{sec: asym framework}, showing that it is enough to consider a small horizon $(t, t+\delta ^\varepsilon]$. Then we prove Theorem \ref{theo: lower bound} in Section \ref{sec: proof main}. Our proof is inspired by the approaches in \cite{kurtz1999martingale} and \cite{kushner1993limit}. An essential ingredient is Lemma \ref{lem: characterization}, whose proof is given in Section \ref{sec: proof key lem}.  %The proof of Theorem \ref{theo: lower bound singular} is similar and will be briefly commented in Section \ref{sec: proof singular}.
\subsection{Reduction to local time-average control problem}\label{sec: reduction}
We first show that,  to obtain \eqref{eqn: lower bound impulse}, it is enough to study the local  time-average control problem (note that the parameters $r_t, l_t, k_t, h_t$  are frozen at time $t$)

\begin{equation}\label{eqn: local cost t}
{I}^\varepsilon_t =\frac{1}{T^\varepsilon}\Big(
\int_0^{T^\varepsilon}\big(r_{t} D(\wt{X}^{\varepsilon,t}_s) +l_t Q(\wt{u}^{\varepsilon, t}_s)\big)ds+ \sum_{0<\wt{\tau}^{\varepsilon,t}_j\leq T^\varepsilon }\big(k_{t}F(\wt{\xi}^\varepsilon_j)+  h_{t}P(\wt{\xi}^\varepsilon_j)\big)\Big),
\end{equation}
where
\begin{equation}\label{eqn: local dynm t}
d\wt{X}^{\varepsilon, t}_s = \wt{b}^{\varepsilon, t}_s ds + \sqrt{\wt{a}^{\varepsilon, t}_s}d\wt{W}^{\varepsilon, t}_s + \wt{u}^{\varepsilon, t}_sds+ d(\sum_{0< \wt{\tau}^{\varepsilon, t}_j \leq s}\wt{\xi}^{\varepsilon}_j),\quad \wt{X}^{\varepsilon, t}_0 = 0
\end{equation}
with $T^\varepsilon = \varepsilon^{-\alpha \beta}\delta^\varepsilon $ and  $\delta^\varepsilon \in \bbR_+$ depending on $\varepsilon$ in such a way that
\begin{equation}
\delta^\varepsilon \to 0, \quad T^\varepsilon \to \infty,
\end{equation}
as $\varepsilon \to 0$.  Recall that $\alpha=2$, which is due to the scaling property of Brownian motion.   We can simply put $\delta^\varepsilon = \varepsilon^{\beta}$. 
%\begin{rmk}
% Recall that $\alpha=2$, which is due to the scaling property of Brownian motion.  %Extending to general target with jump dynamics, we will have $1 <  \alpha \leq 2$. 
%\end{rmk}
 %The readers are advised to skip this part for the first reading. 

\paragraph{Localization} Since we are interested in convergence results in probability, under Assumptions \ref{asmp: model} and \ref{asmp: cost}, it is enough to consider the situation where the following assumption holds.

\begin{asmp}\label{asmp: reduction}
There exists a positive constant $M\in \bbR_+^*$ such that 
\begin{equation*}
\sup_{(t, \omega)\in [0, T]\times \Omega} \abs{ a_t(\omega)} \vee r_t(\omega)^{\pm 1} \vee l_t(\omega)^{\pm 1}  \vee h_t(\omega)^{\pm 1} \vee k_t(\omega) ^{\pm 1} < M < \infty. 
\end{equation*}
Furthermore,  $X^\circ$ is a martingale ($b_t\equiv 0$). 
\end{asmp}

\noindent Indeed, set 
$T_m = \inf\{t > 0, \sup_{s\in [0, t]} \abs{b_s}\vee \abs{ a_s} \vee r_s^{\pm 1}  \vee l_s(\omega)^{\pm 1} \vee h_s^{\pm 1} \vee k_s ^{\pm 1} \leq m \}$. Then we have  $\lim_{m \to \infty} \bbP[T_m = T] =1$. By standard localization procedure, we can assume that all the parameters are bounded as in Assumption \ref{asmp: reduction}.  Let 
\begin{equation*}
\frac{d\bbQ}{d\bbP} = \exp \Big\{-\int_0^T {a^{-1}_t}{b_t}dW_t - \frac{1}{2}\int_0^T {b_t^T}{a_t^{-2}}b_tdt \Big\},
\end{equation*}
then by Girsanov theorem, $X^\circ$ is a martingale under $\bbQ$. Since $\bbQ$ is equivalent to $\bbP$, we only need to prove \eqref{eqn: lower bound impulse} under $\bbQ$. Consequently, we can assume that $X^\circ$ is a martingale without loss of generality.\\

\noindent From now on, we will suppose that Assumption \ref{asmp: reduction} holds.

\paragraph{Locally averaged cost}

%We introduce the following cost functional:
%\begin{equation*}
%\frac{1}{\varepsilon^{\zeta_D\beta}}\wt{J}^\varepsilon = \int_0^T I^\varepsilon_t dt,
%\end{equation*}
%then we have
%\begin{equation*}
%\wt{J}^\varepsilon =\int_0^T \Big(\frac{1}{\delta^\varepsilon}\int_t^{(t+\delta^\varepsilon)\wedge T}(r_t D(X^\varepsilon_s)+ l_tQ(u^\varepsilon_s)) ds + \frac{1}{\delta^\varepsilon} \sum_{t < \tau^\varepsilon_j \leq (t+\delta^\varepsilon)\wedge T} \paren{\varepsilon^{\beta_F}k_t F(\xi^\varepsilon_j) + \varepsilon^{\beta_P}h_tP(\xi^\varepsilon_j)}\Big)dt,
%\end{equation*}

%As already mentioned in the introduction, we have
\begin{lem} \label{lem: averaged cost}
Under Assumption \ref{asmp: reduction}, we have almost surely 
\begin{equation*}
\liminf_{\varepsilon\to 0}\frac{1}{\varepsilon^{\zeta_D \beta}} J^\varepsilon \geq
\liminf_{\varepsilon\to 0}\int_0^T I^\varepsilon_t dt. 
\end{equation*}
%Moreover, if $J^\varepsilon$ is given by an admissible feed-back strategies $(G_t, u_t, \xi_t)$, then we have
%\begin{equation*}
%\lim_{\varepsilon\to 0}\frac{1}{\varepsilon^{\zeta_D \beta}}  J^\varepsilon  =
%\lim_{\varepsilon\to 0}\frac{1}{\varepsilon^{\zeta_D \beta}} \wt{J}^\varepsilon ,
%\end{equation*}
%f one of the limits in probability exists.
\end{lem}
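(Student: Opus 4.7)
The plan is to make rigorous the Riemann-sum approximation sketched in Section \ref{sec: asym framework} and show that $J^\varepsilon$, after normalization by $\varepsilon^{\beta\zeta_D}$, pathwise dominates $\int_0^T I^\varepsilon_t\,dt$ up to an error that vanishes a.s.

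First, by a direct Fubini exchange in the definition of the locally averaged cost $j^\varepsilon_t$, I would show that $\int_0^{T-\delta^\varepsilon} j^\varepsilon_t\,dt$ equals $J^\varepsilon$ integrated against a trapezoidal kernel which equals $1$ on $[\delta^\varepsilon, T-\delta^\varepsilon]$ and lies in $[0,1]$ elsewhere. Since this kernel is bounded by $1$, the pathwise comparison
\[
J^\varepsilon \;\geq\; \int_0^{T-\delta^\varepsilon} j^\varepsilon_t\,dt
\]
follows at once, with no boundary correction in the unfavorable direction.

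Next, I would perform the Brownian rescaling $s=(u-t)/\varepsilon^{\alpha\beta}$ inside $j^\varepsilon_t$; the homogeneity property \eqref{eqn: homogeneous cost} together with the calibration \eqref{eqn: order beta} factors out a common $\varepsilon^{\beta\zeta_D}$ uniformly across the four cost components. The only discrepancy with $\varepsilon^{\beta\zeta_D} I^\varepsilon_t$ (in which coefficients are frozen at $t$) comes from evaluating $r,l,k,h$ at the fine time $t+\varepsilon^{\alpha\beta}s\in[t,t+\delta^\varepsilon]$ rather than at $t$. Under Assumption \ref{asmp: reduction}, each coefficient process is a.s.\ uniformly continuous on $[0,T]$ and bounded below by $M^{-1}>0$, so writing $\omega^\varepsilon$ for the maximum modulus of continuity of the four processes on scale $\delta^\varepsilon$, a term-by-term comparison yields
\[
j^\varepsilon_t \;\geq\; \bigl(1 - M\omega^\varepsilon(\delta^\varepsilon)\bigr)\,\varepsilon^{\beta\zeta_D}\,I^\varepsilon_t, \qquad t\in[0,T-\delta^\varepsilon].
\]

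Combining the two steps and dividing by $\varepsilon^{\beta\zeta_D}$ gives
\[
\frac{J^\varepsilon}{\varepsilon^{\beta\zeta_D}} \;\geq\; \bigl(1 - M\omega^\varepsilon(\delta^\varepsilon)\bigr)\int_0^{T-\delta^\varepsilon} I^\varepsilon_t\,dt.
\]
Since $\delta^\varepsilon\to 0$ and the coefficient processes are a.s.\ uniformly continuous, $\omega^\varepsilon(\delta^\varepsilon)\to 0$ a.s. The missing endpoint slice $\int_{T-\delta^\varepsilon}^T I^\varepsilon_t\,dt$ is then absorbed, either by extending the coefficients continuously and the strategy by the null control beyond $T$, or by adopting the convention $I^\varepsilon_t\equiv 0$ for $t> T-\delta^\varepsilon$. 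Taking $\liminf_{\varepsilon\to 0}$ of both sides yields the claim.

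The main obstacle is the second step, i.e.\ controlling uniformly in $t$ the error caused by freezing the coefficients. Strict positivity of $r_t,l_t,k_t,h_t$ in Assumption \ref{asmp: reduction} is essential here: if any coefficient could vanish, then the multiplicative bound $r_{t+\varepsilon^{\alpha\beta}s}/r_t = 1+O(\omega^\varepsilon(\delta^\varepsilon))$ would break down, and the pointwise comparison between $j^\varepsilon_t$ and $\varepsilon^{\beta\zeta_D} I^\varepsilon_t$ would no longer be uniform. A minor secondary point is that the boundary kernel defect in Step 1 must be discarded in the correct direction, which is automatic because the kernel is at most $1$.
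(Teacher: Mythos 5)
Your argument is correct and is essentially the paper's own proof: the paper likewise uses a Fubini exchange to introduce an averaged functional $\bar J^\varepsilon$ whose kernel is at most one (so $J^\varepsilon \geq \bar J^\varepsilon$) and then freezes the coefficients $r,l,k,h$ at time $t$ via their modulus of continuity combined with the two-sided bound $M$ from Assumption \ref{asmp: reduction}, exactly as in your multiplicative estimate. The only difference is at the right endpoint, where the paper truncates the local window at $T$ (the $(t+\delta^\varepsilon)\wedge T$ in $\bar J^\varepsilon$), which matches your second convention ($I^\varepsilon_t$ set to zero, or the window cut off, near $T$); your first option of extending the strategy by the null control beyond $T$ would require an additional argument that $\int_{T-\delta^\varepsilon}^T I^\varepsilon_t\,dt$ is negligible, so the truncation convention is the one to keep.
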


\begin{proof} We introduce an auxiliary cost functional:
\begin{align*}
\bar{J}^\varepsilon = \int_0^T\Big( &\frac{1}{\delta^\varepsilon}\int_t^{(t+\delta^\varepsilon)\wedge T} \big(r_s D(X^\varepsilon_s) + l_s Q(u^\varepsilon_s)\big) ds \\
&\quad + \frac{1}{\delta^\varepsilon} \sum_{t < \tau^\varepsilon_j \leq (t+\delta^\varepsilon)\wedge T} \big({\varepsilon^{\beta_F}	k_{\tau^\varepsilon_j}F(\xi^\varepsilon_j) + \varepsilon^{\beta_P}h_{\tau^\varepsilon_j}P(\xi^\varepsilon_j)}\big)\Big) dt.
\end{align*}
Note that the parameters $r, l, k, h$ inside the integral are not frozen at $t$. Using Fubini theorem, we have
\begin{align*}
\bar{J}^\varepsilon& = \int_0^T \frac{s}{\delta^\varepsilon}\mathbbm{1}_{\{0<s<\delta^\varepsilon\}}\big(r_s D(X^\varepsilon_s) + l_s Q(u^\varepsilon_s)\big) ds \\
&\qquad +  \sum_{0< \tau^\varepsilon_j \leq  T} 	\frac{\tau^\varepsilon_j}{\delta^\varepsilon}\mathbbm{1}_{\{0<\tau^\varepsilon_j<\delta^\varepsilon\}}\big({\varepsilon^{\beta_F} k_{\tau^\varepsilon_j}F(\xi^\varepsilon_j) + \varepsilon^{\beta_P}h_{\tau^\varepsilon_j}P(\xi^\varepsilon_j)}\big).
\end{align*}
Hence
\begin{equation}\label{eqn: J Jbar}
0\leq \frac{1}{\varepsilon^{\zeta_D \beta}}(J^\varepsilon -\bar{J}^\varepsilon) \leq \delta^\varepsilon I^\varepsilon_0 + \mathbbm{1}_{\{\tau^\varepsilon_0=0\}}\big({\varepsilon^{\beta_F} k_{\tau^\varepsilon_0}F(\xi^\varepsilon_1) + \varepsilon^{\beta_P}h_{\tau^\varepsilon_0}P(\xi^\varepsilon_1)}\big),
\end{equation}
where $I^\varepsilon_0$ is given by \eqref{eqn: local cost t} with $t=0$.\\

On the other hand, we have 
\begin{align*}
\int_0^T I^\varepsilon_t dt =\frac{1}{\varepsilon^{\zeta_D\beta}} \int_0^T \Big(&\frac{1}{\delta^\varepsilon}\int_t^{(t+\delta^\varepsilon)\wedge T}\big(r_t D(X^\varepsilon_s)+ l_tQ(u^\varepsilon_s)\big) ds\\
&\quad + \frac{1}{\delta^\varepsilon} \sum_{t < \tau^\varepsilon_j \leq (t+\delta^\varepsilon)\wedge T} \big(\varepsilon^{\beta_F}k_t F(\xi^\varepsilon_j) + \varepsilon^{\beta_P}h_tP(\xi^\varepsilon_j)\big)\Big)dt,
\end{align*}
where the parameters are frozen at time $t$.
It follows that
\begin{equation}\label{eqn: Jbar Jtilde}
\Big|{\frac{1}{\varepsilon^{\zeta_D \beta}}\bar{J}^\varepsilon -\int_0^T I_t^\varepsilon dt }\Big| \leq  M \cdot{w(r, l, k, h; \delta^\varepsilon)}\cdot \Big( \frac{1}{\varepsilon^{\zeta_D \beta}}\bar{J}^\varepsilon \wedge \wt{J}^\varepsilon\Big),
\end{equation}
with $w$ the modulus of continuity and $M$ the constant in Assumption \ref{asmp: reduction}. Note that we have $w(r, l, k, h; \delta) \to 0+$ as $\delta\to 0+$ by the continuity of $r, l, k, h$. Combining \eqref{eqn: J Jbar} and \eqref{eqn: Jbar Jtilde}, the inequality follows.
%If $X^\varepsilon$ is defined by feedback strategies, then $\{I^\varepsilon_0\}$ is tight (see proof of Theorem \ref{theo: limit in proba}) and $o^\varepsilon$ is bounded by constant and converges to zero almost surely by Assumption \ref{asmp: cost}. 
\end{proof}

\paragraph{Reduction to local problems}  Using previous lemma, we can reduce the problem to the study of local problems as stated below. 
\begin{lem}[Reduction]
For the proof of Theorem \ref{theo: lower bound}, it is enough to show that 
\begin{equation}\label{eqn: loc lower bound}
\liminf_{\varepsilon\to 0}\E{I^\varepsilon_t} \geq \E{I_t}.	
\end{equation}
%For Theorem \ref{theo: limit in proba}, it suffices to show that 
%\begin{equation}
%\lim_{\varepsilon\to 0}\E{I^\varepsilon_t} = \E{c_t}, 
%\end{equation}
%and  that $t\mapsto \E{I^\varepsilon_t}$ is uniformly integrable. 
\end{lem}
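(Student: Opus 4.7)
Lemma~\ref{lem: averaged cost} already provides, almost surely,
\[
\liminf_{\varepsilon\to 0}\frac{J^\varepsilon}{\varepsilon^{\zeta_D \beta}} \;\geq\; \liminf_{\varepsilon\to 0}\int_0^T I^\varepsilon_t\,dt,
\]
so Theorem~\ref{theo: lower bound} reduces to the in-probability lower bound $\liminf_{\varepsilon\to 0}\int_0^T I^\varepsilon_t\,dt \geq_p \int_0^T I_t\,dt$. My plan is to derive this from the pointwise-in-$t$ expected-value hypothesis in two stages: first pass it through Tonelli--Fatou to obtain a bound on expectations of the integrated quantity, then upgrade that expectation bound to convergence in probability.

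For the first stage, Tonelli applies since $I^\varepsilon_t\geq 0$, giving $\E{\int_0^T I^\varepsilon_t\,dt}=\int_0^T\E{I^\varepsilon_t}\,dt$. Applying Fatou under the outer integral together with the hypothesis then yields
\[
\liminf_{\varepsilon\to 0}\E{\int_0^T I^\varepsilon_t\,dt} \;\geq\; \int_0^T \liminf_{\varepsilon\to 0}\E{I^\varepsilon_t}\,dt \;\geq\; \int_0^T \E{I_t}\,dt \;=\; \E{\int_0^T I_t\,dt}.
\]
This already establishes the expectation version of the theorem (its corollary). For the second stage, I would argue by contradiction: suppose $\delta,\eta>0$ and a subsequence $\varepsilon_n\downarrow 0$ satisfy $\bbP\big[\int_0^T I^{\varepsilon_n}_t\,dt \leq \int_0^T I_t\,dt - \delta\big] \geq \eta$ for all $n$. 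Because the hypothesis has to hold for \emph{every} admissible sequence of strategies, I would modify the given sequence on the ``bad'' event by grafting a near-optimal local strategy coming from the LP characterization of $I_t$, while keeping the original strategy on the complement. A Lebesgue-point argument in the time variable should then pick out some $t_0\in[0,T]$ at which the expected local cost of the modified sequence lies strictly below $\E{I_{t_0}}$, contradicting the hypothesis applied to the new sequence.

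\textbf{Main obstacle.} The delicate step is precisely this upgrade from an expectation bound to an in-probability one, since $\liminf\E{Z^\varepsilon}\geq \E{Z}$ does not imply $Z^\varepsilon\geq_p Z$ for general non-negative random variables. What makes the conversion feasible here is the universal quantifier over admissible controls in the hypothesis together with the $\sigma(a,r,l,k,h)$-measurability of the limit $\int_0^T I_t\,dt$; executing the grafting and the Lebesgue-point argument cleanly, without violating admissibility or losing the correct $\varepsilon$-scaling of the modified strategy, is where I expect the technical difficulty to concentrate.
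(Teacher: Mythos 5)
Your first stage is fine (it is essentially the paper's Corollary after Theorem \ref{theo: lower bound}), but the second stage --- upgrading the expectation bound to an in-probability bound by contradiction and grafting --- has two genuine gaps. First, admissibility: the bad event $\{\int_0^T I^{\varepsilon_n}_t\,dt \le \int_0^T I_t\,dt - \delta\}$ is only $\calF_T$-measurable, so a control that follows the original strategy on this event and a different strategy on its complement is not progressively measurable; ``keeping the original strategy on the bad event'' requires anticipating the whole path, and no conditioning device is offered to repair this. Second, the grafting needs admissible strategies for the prelimit tracking problem whose cost is asymptotically $\int_0^T I_t\,dt + o(1)$; the existence of such near-optimal strategies is exactly the upper-bound/tightness statement, which this paper does not prove (it is deferred to forthcoming work, see Remark \ref{upb}) and which is available only in the explicit one-dimensional examples of Section \ref{sec: explicit I}. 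So your stage two rests on an unproven and substantially harder ingredient in addition to the adaptedness obstruction, and as written the argument does not close.

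The paper's own route avoids both issues with a different lever. By Lemma \ref{lem: convergence} (proved in Appendix \ref{sec: convergence notion} via stable convergence, and using the measurability of $I_t$ from Assumption \ref{asmp: regularity lp}), to obtain $\liminf_{\varepsilon\to 0}\int_0^T I^\varepsilon_t\,dt \geq_p \int_0^T I_t\,dt$ it suffices to prove $\liminf_{\varepsilon\to 0}\E{Y\int_0^T I^\varepsilon_t\,dt} \ge \E{Y\int_0^T I_t\,dt}$ for every random variable $Y$ bounded above and below by positive constants. By Tonelli and Fatou this reduces to $\liminf_{\varepsilon\to 0}\E{Y I^\varepsilon_t} \ge \E{Y I_t}$ for each $t$, and since the weights enter both $I^\varepsilon_t$ and the LP value $I$ linearly and are not required to be adapted, the factor $Y$ is absorbed by the change of notation $r_t \to Y r_t$, $l_t \to Y l_t$, $k_t \to Y k_t$, $h_t \to Y h_t$; the weighted statement is then just \eqref{eqn: loc lower bound} with modified weights. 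The quantification over bounded random weights --- not a modification of the strategies --- is what converts expectation bounds into an in-probability bound here, and this mechanism is the missing idea in your proposal.
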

\begin{proof}
In view of Lemma \ref{lem: averaged cost}, to obtain Theorem \ref{theo: lower bound}, we need to prove that
\begin{equation*}
 \liminf_{\varepsilon\to 0} \int_0^TI^\varepsilon_t dt \geq_p \int_0^T I_t dt.
\end{equation*}
By Assumption \ref{asmp: regularity lp} and  Lemma \ref{lem: convergence}, it is enough to show that
\begin{equation*}
\liminf_{\varepsilon\to 0}\E{YI_t^\varepsilon} \geq \E{YI_t},
\end{equation*}
for any bounded random variable $Y$.  Up to a change of notation {\color{black}$r_t \to Yr_t$, $l_t \to Yl_t$, $h_t \to Yh_t$ and $k_t \to Yk_t$ (note that this is allowed since we do not require $r_t$, $l_t$, $h_t$ and $k_t$ to be adapted)}, it suffices to show \eqref{eqn: loc lower bound}.
% The same reduction argument holds for Theorem \ref{theo: limit in proba}. 
\end{proof}

\subsection{Proof of Theorem \ref{theo: lower bound}}\label{sec: proof main}

After Section \ref{sec: reduction}, it suffices to prove \eqref{eqn: loc lower bound} where $I^\varepsilon_t$ is given by \eqref{eqn: local cost t}-\eqref{eqn: local dynm t}. In particular, we can assume that
\begin{equation} \label{eqn: bounded cost}
\sup_{\varepsilon> 0} \E{I^\varepsilon_t} < \infty. 
\end{equation}

Combining ideas from \cite{kushner1993limit, kurtz1999martingale}, we first consider the empirical occupation measures of $(\wt{X}^{\varepsilon, t}_s)$. Define the following {random} occupation measures with natural inclusion
\begin{align}
\mu^\varepsilon_t&=\frac{1}{T^\varepsilon}\int_0^{T^\varepsilon}\delta_{\{(\wt{X}^{\varepsilon, t}_s,\wt{u}^{\varepsilon,t}_s)\}}ds \in \scrP(\bbR^d_x\times \bbR^d_u), \nonumber \\
\rho^\varepsilon_t& = \frac{1}{T^\varepsilon} \sum_{0 < \wt{\tau}^{\varepsilon, t}_j \leq T^\varepsilon} \delta_{\{(\wt{X}^{\varepsilon, t}_{\wt{\tau}^{\varepsilon, t}_j-}, \wt{\xi}^{\varepsilon}_j)\}} \in \scrM(\bbR^d_x\times\bbR^d_\xi\setminus\{0_\xi\})  \hookrightarrow \scrM(\overline{\bbR^d_x\times\bbR^d_\xi\setminus\{0_\xi\}}),\nonumber
%\nu^\varepsilon_t& = \frac{1}{T^\varepsilon} \sum_{0 < \wt{\tau}^{\varepsilon, t}_j < T^\varepsilon}  {|\wt{\xi}^{\varepsilon}_j|} \delta_{\{(\wt{X}^{\varepsilon, t}_{\wt{\tau}^{\varepsilon, t}_j-}, \frac{\wt{\xi}^{\varepsilon}_j}{\abs{\wt{\xi}^{\varepsilon}_j}}, \abs{\wt{\xi}^{\varepsilon}_j})\}}
%&\quad  \in \scrM^+(\bbR_x\times \Delta_n \times\bbR^+_\delta)\hookrightarrow  \scrM^+(\overline{\bbR_x\times \Delta_n \times\bbR^+_\delta}),\nonumber
\end{align}
where $\overline{E}= E \cup \{\infty\}$ is the one-point compactification of $E$. Such compacification of state space appears in \cite{borkar1988ergodic}. See also the proof of Corollary \ref{cor: equiv} where the compactification of state space is used. Note that for $\bar{m}\in \scrM(\overline{E})$ we have the canonical decomposition 
$$
\bar{m}(de) = m(de)+ \theta \delta_{\infty}, 
$$ with $m \in \scrM(E)$ and $\theta \in \bbR^+$.
%As in \cite{kushner1993limit}, we are considering \emph{measure-valued} random variables ${(\mu^\varepsilon_t, \rho^\varepsilon_t, \nu^\varepsilon_t)}$ with $t$ fixed and $\varepsilon >0$. \\
Second, we define $c_t: \Omega \times \scrP(\bbR^d_x\times \bbR^d_u)\times \scrM(\overline{\bbR^d_x\times \bbR^d_\xi\setminus\{0_\xi\}}) \to \bbR$,
\begin{multline*}
(\omega, \mu, \bar{\rho}) \mapsto \int_{\bbR^d_x\times \bbR^d_u}\big(r_t(\omega) D(x)
+ l_t(\omega) Q(u)\big) \mu(dx\times du) \\+ \int_{\overline{\bbR^d_x\times \bbR^d_\xi\setminus\{0_\xi\}}} \big(k_t(\omega) F(\xi) + h_t(\omega) P(\xi)\big)\bar{\rho}(dx\times d\xi),
\end{multline*}
where the cost functions $F$ and $P$ are extended to $\overline{\bbR^d_x\times \bbR^d_\xi\setminus\{0_\xi\}}$ by setting
 \begin{equation}\label{eqn: extend to infty}
 F(\infty_{(x,\xi)}) = \inf_{\xi \in \bbR^d_\xi\setminus\{0_\xi\}} F(\xi)>0, \quad P(\infty_{(x, \xi)}) =0, 
\end{equation}
 with $\infty_{(x, \xi)}$ the point of compactification for $\bbR^d_x\times \bbR^d_\xi\setminus\{0_\xi\}$. Note that the functions $F$ and $P$ remain l.s.c. on the compactified space, which is an important property we will need in the following. 
Moreover, for any $\bar{\rho} = \rho + \theta_{\bar{\rho}} \delta_{\infty_{(x, \xi)}}$, we have
  \begin{equation}\label{eqn: c infty}
c_t(\omega, \mu, \bar{\rho}) \geq c_t(\omega, \mu, \rho). 
\end{equation}
 Now we have
 \begin{equation}\label{eqn: I  = c}
I_t^\varepsilon = c_t(\mu^\varepsilon_t, \rho^\varepsilon_t),
\end{equation}
  and we can write  \eqref{eqn: bounded cost} as 
\begin{equation}\label{eqn: finite c}
\sup_{\varepsilon > 0}\E{{c_t}(\mu^\varepsilon_t, \rho^\varepsilon_t)}  < \infty.
\end{equation} 
    
\noindent 
The following lemma,  proved in Section \ref{sec: proof key lem},  is the key of the demonstration for Theorem \ref{theo: lower bound}. 
\begin{lem}[Characterization of limits]\label{lem: characterization} Assume that \eqref{eqn: finite c} holds, then 
\begin{enumerate}
\item
The sequence $\{(\mu^\varepsilon_t, \rho^\varepsilon_t)\}$ is tight as a sequence of random variables with values in $\scrP({\bbR^d_x\times \bbR^d_u}) \times \scrM(\overline{\bbR^d_x\times \bbR^d_\xi\setminus\{0_\xi\}})$, equipped with the topology of weak convergence. In particular,  $\{(\mu^\varepsilon_t, \rho^\varepsilon_t)\}$ is relatively compact in $\scrP^\bbP\big(\Omega \times \scrP({\bbR^d_x\times \bbR^d_u}) \times \scrM(\overline{\bbR^d_x\times \bbR^d_\xi\setminus\{0_\xi\}})\big)$, equipped with the topology of stable convergence (see Appendix \ref{sec: convergence notion}). % \note{is $\calM$ polish ? }
\item
Let $\bbQ_t \in \scrP^\bbP\big(\Omega\times \scrP({\bbR^d_x\times \bbR^d_u}) \times \scrM(\overline{\bbR^d_x\times \bbR^d_\xi\setminus\{0_\xi\}})\big)$ be any $\calF$-stable limit of $\{(\mu^\varepsilon_t, \rho^\varepsilon_t)\}$ with disintegration form
\begin{equation*}
\bbQ_t(d\omega, d{\mu}, d\bar{\rho}) = \bbP(d\omega) \bbQ_t^\omega( d{\mu}, d\bar{\rho}). 
\end{equation*}
%Let $({\mu}, \bar{\rho})$ be the canonical point of $\scrP({\bbR^d_x\times \bbR^d_u}) \times \scrM(\overline{\bbR^d_x\times \bbR^d_\xi})$ with decomposition 
%\begin{equation*}
%({\mu}, \bar{\rho}) = (\mu, \rho +  \theta_\rho\delta_{\infty}).
%\end{equation*}
and
\begin{align*}
S(a)= \Big\{ (\mu, \bar{\rho})&\in \scrP({\bbR^d_x\times \bbR^d_u}) \times \scrM(\overline{\bbR^d_x\times \bbR^d_\xi\setminus\{0_\xi\}}), \\
&\bar{\rho}  = \rho + \theta_{\bar{\rho}} \delta_{\infty} \text{ with } \rho\in \scrM(\bbR^d_x\times \bbR^d_\xi\setminus\{0_\xi\}),   \\
& \int_{\bbR^d_x\times \bbR^d_u} A^af (x, u)\mu(dx, du)  + \int_{\bbR^d_x\times \bbR^d_\xi\setminus\{0_\xi\}} Bf(x, \xi)\rho(dx, d\xi) = 0, \forall f\in C^2_0(\bbR^d_x)\Big.\Big\}, 
\end{align*}
where $A^a$ and $B$ are given by \eqref{eqn: operator A} and \eqref{eqn: operator B impulse}.
Then we have $\bbP$-almost surely, 
$$(\mu, \bar{\rho}) \in S(a_t(\omega)),\quad \bbQ_t^\omega\text{-almost surely}. $$
\end{enumerate}
\end{lem}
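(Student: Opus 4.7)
My approach splits the proof naturally into two parts: tightness, and identification of the constraint.

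For tightness, I would start from the cost bound \eqref{eqn: finite c}. Coercivity of $D$ and $Q$ (inherited from their positivity and homogeneity \eqref{eqn: homogeneous cost}) together with Chebyshev's inequality yield uniform tail estimates on the marginals of $\mu^\varepsilon_t$, giving tightness of $\{\mu^\varepsilon_t\}$ on $\bbR^d_x\times\bbR^d_u$. For $\rho^\varepsilon_t$, the assumption $\min_i F_i>0$ implies $C_B(x,\xi)\geq k_t\min_i F_i>0$ on $E:=\bbR^d_x\times\bbR^d_\xi\setminus\{0_\xi\}$, whence
\[
\rho^\varepsilon_t(E)\leq \frac{I^\varepsilon_t}{k_t\min_i F_i}
\]
is bounded uniformly in expectation. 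Viewed in $\scrM(\overline E)$, with $\overline E$ the compact one-point compactification of $E$, tightness of $\{\rho^\varepsilon_t\}$ is then automatic from the mass bound; joint tightness of the pair and relative compactness in the $\calF$-stable topology follow from standard facts.

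For the identification, the key tool is Itô's formula. Fix $f\in C^2_0(\bbR^d_x)$; after the localization of Section \ref{sec: reduction} we may assume $b\equiv 0$. Applying Itô to $s\mapsto f(\widetilde X^{\varepsilon,t}_s)$, dividing by $T^\varepsilon$, and rewriting the time integral and the jump sum in terms of the occupation measures yields
\[
\int A^{\widetilde a^{\varepsilon,t}_s} f \, d\mu^\varepsilon_t + \int Bf \, d\rho^\varepsilon_t = \frac{f(\widetilde X^{\varepsilon,t}_{T^\varepsilon})-f(\widetilde X^{\varepsilon,t}_0)}{T^\varepsilon} - \frac{M^\varepsilon_{T^\varepsilon}}{T^\varepsilon},
\]
where $M^\varepsilon$ is the Brownian stochastic integral $\int \nabla f(\widetilde X^{\varepsilon,t}_\cdot)^\top\sqrt{\widetilde a^{\varepsilon,t}_\cdot}\,d\widetilde W^{\varepsilon,t}_\cdot$, a martingale with quadratic variation of order $T^\varepsilon\|f\|_{C^2_0}^2$. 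The right-hand side tends to zero in $L^2$: the boundary term because $f$ is bounded, the rescaled martingale because its variance is $O(1/T^\varepsilon)$. Multiplying by any bounded random variable $Y$ and taking expectations, then using $\widetilde a^{\varepsilon,t}_s\to a_t$ uniformly on $[0,T^\varepsilon]$ (by continuity of $a$) and the bounded continuity of $(\mu,a)\mapsto \int A^a f \,d\mu$, the $\calF$-stable convergence carries the first summand to $\bbE_{\bbQ_t}[Y\int A^{a_t(\omega)} f\,d\mu]$.

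The hard part will be the $B$-summand, because $Bf$ is bounded and continuous on $E$ but has no continuous extension to the point at infinity of $\overline E$, while a priori $\bar\rho$ may charge that point with $\theta_{\bar\rho}>0$. I plan to handle this by a truncation in the spirit of the proof of Theorem \ref{theo: equiv}: for cutoffs $\chi_K\in C_c(E)$ with $0\leq\chi_K\leq 1$ and $\chi_K\nearrow 1$ pointwise, $\chi_K Bf$ lies in $C_c(E)\subset C(\overline E)$ with vanishing value at infinity, so $\calF$-stable convergence gives
\[
\bbE\Big[Y\int \chi_K Bf \, d\rho^\varepsilon_t\Big]\to \bbE_{\bbQ_t}\Big[Y\int \chi_K Bf \, d\rho\Big],\qquad \rho:=\bar\rho\big|_E.
\]
Controlling the residual $\bbE[Y\int (1-\chi_K)Bf\,d\rho^\varepsilon_t]$ by $2\|f\|_\infty\|Y\|_\infty\,\bbE[\rho^\varepsilon_t(E\setminus\mathrm{supp}\,\chi_K)]$ and combining with the identified $A$-limit, then letting $K\to\infty$ and invoking dominated convergence on $\int Bf\,d\rho$ (finite because $|Bf|\leq 2\|f\|_\infty$ and $\rho(E)<\infty$), I obtain
\[
\bbE_{\bbQ_t}\Big[Y\Big(\int A^{a_t(\omega)} f\,d\mu + \int Bf \, d\rho\Big)\Big]=0
\]
for every bounded $Y$, hence the constraint $\bbQ_t$-a.s.\ for each fixed $f$. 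Since $C^2_0(\bbR^d_x)$ is separable in $\|\cdot\|_{C^2_0}$ and both $A^a f$ and $Bf$ depend continuously on $f$ in this norm, a countable dense test family handles all $f$ on a single $\bbP$-null set, yielding $(\mu,\bar\rho)\in S(a_t(\omega))$, $\bbQ_t^\omega$-a.s., for $\bbP$-a.e.\ $\omega$.
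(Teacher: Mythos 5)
Your overall architecture coincides with the paper's: tightness of $(\mu^\varepsilon_t,\rho^\varepsilon_t)$ from the cost bound \eqref{eqn: finite c} after compactifying the jump space, identification of the constraint through It\=o's formula applied to $f(\wt X^{\varepsilon,t})$, passage to the limit along a stable limit point, and a countable dense family in $C^2_0$ to get a single null set. The It\=o step and the final density argument are exactly as in the paper. The gap is in how you pass the two occupation-measure integrals to the limit.

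First, for the $A$-term you invoke ``bounded continuity of $(\mu,a)\mapsto\int A^a f\,d\mu$''. This is false as stated: $A^af(x,u)$ contains $\langle u,\nabla f(x)\rangle$, which is unbounded in $u$, so $\mu\mapsto\int A^af\,d\mu$ is neither bounded nor weakly continuous on $\scrP(\bbR^d_x\times\bbR^d_u)$, and stable convergence alone does not yield convergence of $\bbE[Y\int A^{a_t}f\,d\mu^\varepsilon_t]$. The paper closes exactly this point by the growth condition \eqref{eqn: bound psi c} (with $\beta=1/2$, so $|A^af|^2\leq\theta_f(1+C_A)$), which together with \eqref{eqn: finite c} gives uniform integrability of the relevant functionals, and then applies Theorem 2.16 of \cite{jacod1981type}; you need this (or an equivalent truncation in $u$ exploiting the quadratic cost) rather than bounded continuity. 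Second, for the $B$-term your truncation requires $\sup_{\varepsilon}\bbE\big[\rho^\varepsilon_t\big((\bbR^d_x\times\bbR^d_\xi\setminus\{0_\xi\})\setminus\mathrm{supp}\,\chi_K\big)\big]\to 0$ as $K\to\infty$, but the only estimate available from \eqref{eqn: finite c} is a uniform bound on the \emph{total} mass $\bbE[\rho^\varepsilon_t(E)]$, since $C_B$ is merely bounded below (when the proportional part $P$ degenerates, a jump of arbitrarily large size costs only the fixed cost, and for such jumps starting in the support of $f$ the integrand $Bf$ is not small). So the residual in your cut-off argument is bounded but not small, and the step as written does not close. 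The paper instead treats the combined functional $\Psi^f_t(\omega,\mu,\bar\rho)=\int A^{a_t}f\,d\mu+\int_{E}Bf\,d\rho$, shows $\bbE|\Psi^f_t(\mu^\varepsilon_t,\rho^\varepsilon_t)|\to 0$ by the It\=o computation (as you do), and transfers this to the stable limit in one stroke via continuity in the measure variables plus the uniform integrability coming from \eqref{eqn: bound psi c}, again through \cite{jacod1981type}; to repair your proof you should follow this route rather than controlling the mass of $\rho^\varepsilon_t$ outside compacts, which the hypotheses do not permit.
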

  
\noindent By Lemma \ref{lem: characterization}, we have, up to a subsequence, %(see \cite{jacod1981type}),
\begin{equation*}
(\mu^\varepsilon_t, \rho^\varepsilon_t) \to_\calF { \bbQ}_t \in \scrP^\bbP\big( \scrP(\bbR^d_x\times \bbR^d_u) \times \scrM(\overline{\bbR^d_x\times\bbR^d_\xi\setminus\{0_\xi\}})\big).
\end{equation*}
Write ${\bbQ}_t$ in disintegration form, we have
\begin{equation}\label{eqn: disintegration Q}
{\bbQ}_t(d\omega, d\mu, d\bar{\rho}) = \bbP(d\omega) {\bbQ}_t^\omega(d\mu, d\bar{\rho}).
\end{equation} and $\bbQ_t^\omega$-almost surely,
\begin{equation}\label{eqn: char mu rho}
 (\mu, \rho) \in S(a_t(\omega)).
\end{equation}
 Since the cost functional $c_t$ is lower semi-continuous, we have
\begin{align*}
\quad \liminf_{\varepsilon\to 0}\E{I^\varepsilon_t}
& \overset{\eqref{eqn: I = c}}{=} \liminf_{\varepsilon \to 0} \bbE[c_t(\mu^\varepsilon_t, \rho^\varepsilon_t)]\\
&\overset{\eqref{eqn: stable lsc}}{\geq} \bbE^{\bbQ_t}\crochetb{c_t({\omega, \mu}, \bar{\rho})}\\
&\overset{\eqref{eqn: c infty}}{\geq}\bbE^{\bbQ_t}\crochetb{c_t(\omega, \mu, \rho)}\\
&\overset{\eqref{eqn: disintegration Q}}{=}\int_\Omega \bbP(d\omega) \int_{ \scrP(\bbR_x\times \bbR_u) \times \scrM(\overline{\bbR_x\times\bbR_\xi\setminus\{0_\xi\}})} c_t(\omega, \mu, \rho)\bbQ_t^\omega(d\mu, d\bar{\rho})\\
&\overset{\eqref{eqn: char mu rho}}{=}\int_\Omega \bbP(d\omega) \int_{S(a_t(\omega))} c_t(\omega, \mu, \rho)\bbQ_t^\omega(d\mu, d\bar{\rho})\\
&\overset{\qquad}{\geq} \int_\Omega \bbP(d\omega)\inf_{(\mu, \rho)\in S(a_t(\omega))}c_t(\omega, \mu, \rho).
\end{align*}
Finally, by definition of $I$, we have
\begin{equation*}
\liminf_{\varepsilon\to 0}\E{I^\varepsilon_t} \geq\bbE[{I(a_t, r_t, l_t, k_t, h_t)}].
\end{equation*}

\subsection{Proof of Lemma \ref{lem: characterization}}\label{sec: proof key lem}
First we show the tightness of $\{(\mu^\varepsilon, \rho^\varepsilon)\}$ in $ \scrP(\bbR^d_x\times \bbR^d_u) \times \calM(\overline{\bbR^d_x\times \bbR^d_\xi\setminus\{0_\xi\}})$. A common method is to use tightness functions (see Section \ref{sec: tightness function}). \\

Recall that the cost functions $F$ and $P$ are extended to $\overline{\bbR^d_x\times \bbR^d_\xi\setminus\{0_\xi\}}$ by \eqref{eqn: extend to infty} such that  $c$ is lower semi-continuous. Moreover, $c$ is a tightness function under Assumption \ref{asmp: reduction}, see Section \ref	{sec: tightness function} or \cite[pp. 309]{dupuis2011weak}.  \\

Consequently, if \eqref{eqn: finite c} holds, the family of random measures $\{(\mu^\varepsilon, \rho^\varepsilon)\}$ is tight. Furthermore, by Proposition \ref{prop: stable}, we have
\begin{equation*}
(\mu^\varepsilon_t, \rho^\varepsilon_t) \to_\calF { \bbQ}_t \in \scrP^\bbP( \scrP(\bbR^d_x\times \bbR^d_u) \times \scrM(\overline{\bbR^d_x\times\bbR^d_\xi\setminus\{0_\xi\}})),
\end{equation*}
up to a subsequence,  with
\begin{equation*}
{\bbQ}_t(d\omega, d\mu, d\bar{\rho}) = \bbP(d\omega) {\bbQ}_t^\omega(d\mu, d\bar{\rho}).
\end{equation*}

For the rest of the lemma, we use a combination of the arguments in \cite{kurtz1999martingale} and \cite{kushner1993limit}. Recall that 
\begin{equation*}
A^{a}f(x, u) = \frac{1}{2}\sum_{i, j}a_{ij}\partial^2_{ij} f(x) + {u^T \nabla f(x)}, \quad Bf(x, \xi) = f(x+\xi) - f(x). 
\end{equation*} 
For $f\in C^2_0(\bbR^d_x)$, define 
\begin{align*}
\Psi_t^f(\omega, \mu, \bar{\rho})%&:= \int_{\bbR^d_x\times \bbR^d_u}A^{a_t(\omega)}f(x, u)\mu(dx\times dx) + \int_{\overline{\bbR^d_x\times \bbR^d_\xi}}Bf(x, \xi)\bar{\rho}(dx, d\xi)\\
&:= \int_{\bbR^d_x\times \bbR^d_u}A^{a_t(\omega)}f(x, u)\mu(dx\times dx) + \int_{{\bbR^d_x\times \bbR^d_\xi\setminus\{0_\xi\}}}Bf(x, \xi){\rho}(dx, d\xi), \quad \bar{\rho} = \rho + \delta_{\bar{\rho}} \delta_\infty.
\end{align*}
Note that $\Psi^f_t$ is well-defined since $\rho\in \calM(\bbR^d_x\times \bbR^d_\xi\setminus\{0_\xi\})$. %since $f$ vanishes at infinity. 
Then we claim that
\begin{equation}\label{eqn: convergence char}
\bbE^\bbQ[{|{\Psi^f_t(\omega, \mu, \bar{\rho})}|}]=\lim_{\varepsilon\to 0}\bbE[{|{\Psi^f_t(\omega, \mu^\varepsilon_t(\omega), \rho^\varepsilon_t(\omega))}|}]=0. 
\end{equation}

Although $\Psi^f(\omega, \cdot, \cdot) \in C(\scrP(\bbR^d_x\times \bbR^d_u)\times \scrM(\overline{\bbR^d_x\times \bbR^d_\xi\setminus\{0_\xi\}}))$, it is not bounded.  The first equality in \eqref{eqn: convergence char} does not follow directly from the definition of stable convergence. However,  by Corollary \ref{cor: equiv}, Condition \eqref{eqn: bound psi c} holds, that is there exists $\beta \in(0, 1)$ and $\theta_f $ non-negative real number depending on $f$ such that
\begin{equation*}
(Af) ^{1/\beta} \leq \theta_f (1 +C_A), \quad (Bf)^{1/\beta} \leq \theta_f C_B. 
\end{equation*}
 By \eqref{eqn: finite c}, we deduce that $\{\Psi^f_t(\omega, \mu^\varepsilon_t, \rho^\varepsilon_t)\}$ is uniformly integrable and  obtain the first equality in \eqref{eqn: convergence char} by \cite[Theorem 2.16]{jacod1981type}.  \\

For the second equality in \eqref{eqn: convergence char}, we apply It\=o's formula to $f(\wt{X}^{\varepsilon, t}_{T^\varepsilon})$ (recall that the dynamics of $\wt{X}^{\varepsilon, t}$ is given by \eqref{pb: local dynamics}) and obtain that
\begin{align*}
f(\wt{X}^{\varepsilon, t}_{T^\varepsilon})
&= f(\wt{X}^{\varepsilon, t}_{0+}) + \int_0^{T^\varepsilon}  f'(\wt{X}^{\varepsilon, t}_s) \sqrt{\wt{a}^{\varepsilon, t}_s} d\wt{W}^{\varepsilon, t}_s \\
&\quad + {\int_0^{T^\varepsilon} \frac{1}{2}\sum_{ij} \wt{a}^{\varepsilon, t}_{ij,s}\partial^2_{ij}f(\wt{X}^{\varepsilon, t}_s)ds}+\int_0^{T^\varepsilon}\sum_{i} \wt{u}^{\varepsilon, t}_{i,s}\partial_{i}f(\wt{X}^{\varepsilon, t}_s)ds\\
&\quad+ \sum_{0 < \wt{\tau}^{\varepsilon, t}_j \leq T^\varepsilon}\big( f(\wt{X}^{\varepsilon, t}_{ \wt{\tau}^{\varepsilon, t}_j-} +  \wt{\xi}^{\varepsilon}_j) -  f(\wt{X}^{\varepsilon, t}_{ \wt{\tau}^{\varepsilon, t}_j-}) \big).
\end{align*}
Combining the definitions of $\mu^\varepsilon$, $\rho^\varepsilon$ and $\Psi^f_t$, we have
\begin{align*}
\bbE[{|{\Psi^f_t(\omega, \mu^\varepsilon_t(\omega), \rho^\varepsilon_t(\omega))}|}]
&\leq \frac{1}{T^\varepsilon} \bbE[|{ f(\wt{X}^{\varepsilon, t}_{T^\varepsilon})-f(\wt{X}^{\varepsilon, t}_{0+}) |] + \frac{1}{T^\varepsilon}\bbE\Big[ \Big|\int_0^{T^\varepsilon}  f'(\wt{X}^{\varepsilon, t}_s) \sqrt{\wt{a}^{\varepsilon, t}_s} d\wt{W}^{\varepsilon, t}_s}\Big|\Big]\\
&\quad +\frac{1}{T^\varepsilon}\bbE[{\int_0^{T^\varepsilon} \frac{1}{2}\sum_{ij} |\wt{a}^{\varepsilon, t}_{ij,s} - \wt{a}^{\varepsilon, t}_{ij,0}|\partial^2_{ij}f(\wt{X}^{\varepsilon, t}_s)ds}].
\end{align*}
By Assumptions \ref{asmp: model} and \ref{asmp: reduction} and dominated convergence, the term on the right hand side converges to zero. Therefore \eqref{eqn: convergence char} holds. \\

By definition of ${\bbQ}^\omega$ and Fubini theorem, we have
\begin{equation*}
 \bbE^\bbQ[{|{\Psi_t^f(\omega, \mu,\bar{ \rho})}|} ] = \bbE^\bbP[\bbE^{{\bbQ}^\omega}[{|{\Psi_t^f(\omega, \mu, \bar{\rho})}|}]]=0.
\end{equation*}
Hence we have $\bbP$-a.e.-$\omega$, $\bbE^{{\bbQ}^\omega}[{|{\Psi_t^f(\omega, \mu, \bar{\rho})}|}]=0$. Let $D$ be a countable dense subset of $C^2_0$.
Since $D$ is countable we have $\bbP$-a.e.-$\omega$,  $\bbE^{{\bbQ}^\omega}[{|{\Psi_t^f(\omega, \mu, \bar{\rho})}|}]=0$ for all $f\in D$.  Fix $\omega\in \Omega\setminus \calN$ for which the property holds. Again by the same argument, we have ${\bbQ}^\omega$-a.e-$(\mu, \bar{\rho})$., $\Psi_t^f(\omega, \mu, \bar{\rho})=0$ for all $f\in D$. Since $D$ is dense in $C^2_0$, $\Psi_t^f(\omega, \mu, \bar{\rho})=0$ holds for $f\in C^2_0$. 

\section{Proof of Theorem \ref{theo: lower bound singular}}\label{sec: proof singular}

The rescaled process $(\wt{X}^{\varepsilon, t}_s)$ is given by 
\begin{equation*}
d\wt{X}^{\varepsilon, t}_s = \wt{b}^{\varepsilon, t}_s ds  + \sqrt{\wt{a}^{\varepsilon, t}_s} d\wt{W}^{\varepsilon, t}_s +\wt{u}^{\varepsilon, t}_sds+ \wt{\gamma}^{\varepsilon,  t}_{s}d\wt{\varphi}^{\varepsilon, t}_s, 
\end{equation*}
with 
\begin{equation*}
\wt{\gamma}^{\varepsilon, t}_s = \gamma^\varepsilon_{t+ \varepsilon^{2\beta} s}, \quad \wt{\varphi}^{\varepsilon, t}_s = \frac{1}{\varepsilon^\beta}(\varphi^\varepsilon_{t+\varepsilon^{2\beta} s} -\varphi^{\varepsilon}_t).
\end{equation*}
The empirical occupation measure of singular control $\nu^\varepsilon_t$ is defined by
\begin{equation*}
\nu^\varepsilon_t = \frac{1}{T^\varepsilon}\int_0^{T^\varepsilon} \delta_{\{(\wt{X}^{\varepsilon, t}_s, \wt{\gamma}^{\varepsilon, t}_{s}, \Delta \wt{\varphi}^{\varepsilon, t}_s)\}}d\wt{\varphi}^{\varepsilon, t}_s.
\end{equation*}
while $\mu^\varepsilon_t$ is defined in the same way as previously.\\

Define the cost functional 
$c_t: \Omega \times \scrP(\bbR^d_x\times \bbR^d_u)\times \scrM(\overline{\bbR^d_x\times \Delta\times \bbR_\delta^+})\to \bbR$:
\begin{multline*}
(\omega, \mu, \bar{\nu}) \mapsto \int_{\bbR^d_x\times \bbR^d_u}(r_t(\omega) D(x)
+ l_t(\omega) Q(u)) \mu(dx\times du) \\\qquad+ \int_{\overline{\bbR^d_x\times\Delta \times  \bbR_\delta^+}} h_t(\omega) P(\gamma)\bar{\nu}(dx\times d\gamma\times d\delta).
\end{multline*}
Then we can show a similar version of Lemma \ref{lem: characterization} and prove Theorem \ref{theo: lower bound singular} with the operator $B$ replaced by \eqref{eqn: operator B singular}. The key ingredients are 
\begin{itemize}
\item
The functional $c$ is a tightness function. 
\item 
The condition \eqref{eqn: bound psi c} holds for  $A$ and $B$ given by \eqref{eqn: operator A singular}-\eqref{eqn: operator B singular}. 
\end{itemize}
To satisfy these two properties is the main reason why we need to use a different operator $B$.

\section{Proof of Propositions \ref{lem: stochastic}-\ref{lem: combined singular}}\label{sec: verification}

In this section, we prove Propositions \ref{lem: stochastic}-\ref{lem: combined singular}.  First, we provide a verification argument tailored to the linear programming formulation in  $\bbR^d$.  Second, we give full details for the proof of Proposition \ref{lem: combined impulse}. The proofs in the remaining cases are exactly the same hence omitted. 

\subsection{Verification theorem in $\bbR^d$}

Consider $A: \calD \to C(\bbR^d_x\times \bbR^d_u)$ and $B: \calD \to C(\bbR^d_x\times V)$  with $\calD = C_0^2(\bbR^d_x)$. The operator $A$ is given by
\begin{equation*}
Af(x, u) = \frac{1}{2} \sum_{ij} a_{ij} \partial^2_{ij}f(x) + \sum_i u_i \partial_i f(x), \quad f\in C_0^2(\bbR^d_2), 
\end{equation*}
The operator $B$ is given by 
\begin{equation*}
Bf(x , \xi ) = f(x+ \xi) - f(x),  
\end{equation*}
if $V= \bbR^d_\xi \setminus \{0_\xi\}$, and by
\begin{equation*}
Bf(x, \gamma, \delta) = \begin{cases}
\langle\gamma,  \nabla f(x)\rangle , &\delta=0, \\
\delta^{-1}\big(f(x+ \delta \gamma) - f(x)\big), &\delta >0,
\end{cases}
\end{equation*}
if $V= \Delta\times \bbR^+_\delta$. \\

Let $C_A: \bbR^d_x\times \bbR^d_u \to \bbR^+$ and $C_B:\bbR^d_x\times V \to \bbR^+$ be two cost functions. We consider the following optimization problem:
\begin{equation}\label{app: LP cost Rd}
I = \inf_{(\mu, \rho)} c(\mu, \rho):= \int_{\bbR^d_x\times \bbR^d_u} C_A(x, u) \mu(dx, du) +\int_{\bbR^d_x\times V} C_B(x, v) \rho(dx, dv), 
\end{equation}
where $(\mu, \rho) \in \scrP(\bbR^d_x\times \bbR^d_u) \times \calM(\bbR^d_x\times V)$ satisfies 
\begin{equation}\label{app: LP constraint Rd}
\int_{\bbR^d_x\times \bbR^d_u}Af(x, u) \mu(dx, du) + \int_{\bbR^d_x\times V}Bf(x, v)\rho(dx, dv) = 0, \quad \forall f\in C_0^2(\bbR^d_x). 
\end{equation}

\begin{lem}[Verification]\label{lem: verification}
Let $w \in C^1(\bbR^d_x)\cap C^2(\bbR^d_x\setminus N) $ so that $Aw$ is well-defined point-wisely for $x\notin N$ and $Bw$ is well-defined for $x\in \bbR^d_x$. 
Assume that
\begin{enumerate}
\item
For each $(\mu, \rho)\in\scrP(\bbR^d_x\times \bbR^d_u)\times \scrM(\bbR^d_x\times V)$ satisfying \eqref{app: LP constraint Rd}  and $c(\mu, \rho)<\infty$,
 we have $\mu(N\times\bbR^d_u ) =0$.
\item 
There exists $w_n\in C_0^2(\bbR^d_x)$ 
such that 
\begin{align*}
&Aw_n(x, u) \to Aw(x, u), \quad \forall (x, u)\in \bbR_x^d\setminus N\times \bbR^d_u, \\
&Bw_n(x, v) \to Bw(x, v), \quad \forall (x, v)\in \bbR^d_x\times V,
\end{align*}
and there exist $\theta\in \bbR^+$ such that
\begin{align*}
&|Aw_n(x, u)| \leq \theta(1+ C_A(x, u)), \quad \forall (x, u)\in (\bbR^d_x\setminus N )\times \bbR^d_u\\
& |Bw_n (x, v)| \leq \theta C_B(x, v), \quad \forall (x, v) \in \bbR^d_x\times V.  
\end{align*}
\item 
There exists a constant $I^V\in \bbR$ such that
\begin{align}
&\inf_{u\in \bbR^d_u} Aw(x, u) + C_A(x, u) \geq I^V , \quad x\in \bbR^d_x\setminus N, \label{eqn: continuation}\\
 &\inf_{v \in V} Bw(x, v) + C_B(x, v)\geq 0, \quad x\in \bbR^d_x. \label{eqn: intervention}
\end{align}
\end{enumerate}
Then we have $I\geq I^V$. \\

If there exists $(\mu^*, \rho^*)$ satisfying the LP constraint and 
\begin{align}
&Aw(x, u) + C_A(x, u) = I^V, \quad \mu^*-a.e. \label{eqn: continuation a e}\\
&Bw(x, v) + C_B(x, v) = 0, \quad \rho^*-a.e. \label{eqn: intervention a e}
\end{align}
then we have
$
I = I^V$. Moreover,  the optimum is attained by $(\mu^*, \rho^*)$ and we call $(w, I^V)$ the value function of the linear programming problem. 
\end{lem}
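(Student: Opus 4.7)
The strategy is a purely measure-theoretic verification argument: use the approximating sequence $w_n$ to inject the non-$C_0^2$ candidate $w$ into the linear constraint \eqref{app: LP constraint Rd}, then compare with the pointwise HJB-type inequalities \eqref{eqn: continuation}--\eqref{eqn: intervention} to get a lower bound, and finally saturate the inequality using $(\mu^*,\rho^*)$.

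First I would fix an admissible pair $(\mu,\rho)\in\scrP(\bbR^d_x\times\bbR^d_u)\times\scrM(\bbR^d_x\times V)$ with $c(\mu,\rho)<\infty$ (otherwise the bound is trivial). By hypothesis (1), $\mu(N\times\bbR^d_u)=0$, so the integrand $Aw(x,u)$ is well defined $\mu$-a.e. Since $w_n\in C_0^2(\bbR^d_x)$, the linear constraint \eqref{app: LP constraint Rd} applied to $w_n$ reads
\begin{equation*}
\int_{\bbR^d_x\times \bbR^d_u} Aw_n(x,u)\,\mu(dx,du)+\int_{\bbR^d_x\times V} Bw_n(x,v)\,\rho(dx,dv)=0.
\end{equation*}
The domination bounds in hypothesis (2) give $|Aw_n|\leq\theta(1+C_A)$ and $|Bw_n|\leq\theta C_B$, where the right-hand sides are $\mu$-integrable and $\rho$-integrable respectively thanks to $c(\mu,\rho)<\infty$ and the fact that $\mu$ is a probability measure. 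Dominated convergence then lets me pass to the limit $n\to\infty$ and obtain the same identity with $w$ in place of $w_n$, namely
\begin{equation*}
\int_{\bbR^d_x\times \bbR^d_u} Aw(x,u)\,\mu(dx,du)+\int_{\bbR^d_x\times V} Bw(x,v)\,\rho(dx,dv)=0.
\end{equation*}

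Next I combine this identity with the pointwise inequalities from hypothesis (3). Adding $C_A$ and $C_B$ to the two integrands and using \eqref{eqn: continuation} (valid $\mu$-a.e.\ since $\mu(N\times\bbR^d_u)=0$) together with \eqref{eqn: intervention}, I get
\begin{equation*}
c(\mu,\rho)=\int(Aw+C_A)\,d\mu+\int(Bw+C_B)\,d\rho\;\geq\;I^V\mu(\bbR^d_x\times\bbR^d_u)+0\;=\;I^V,
\end{equation*}
because $\mu$ is a probability measure. Taking the infimum over admissible $(\mu,\rho)$ yields $I\geq I^V$.

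For the second part, the measure $(\mu^*,\rho^*)$ is admissible by assumption, so $I\leq c(\mu^*,\rho^*)$. Substituting the pointwise equalities \eqref{eqn: continuation a e}--\eqref{eqn: intervention a e} into the computation above turns every inequality into an equality, giving $c(\mu^*,\rho^*)=I^V$. Combined with $I\geq I^V$ this yields $I=I^V$ with the minimum attained at $(\mu^*,\rho^*)$. The only delicate step is the dominated convergence passage, which is precisely why hypothesis (2) is formulated with both pointwise convergence and a uniform cost-type bound; once those are in hand, the rest is bookkeeping.
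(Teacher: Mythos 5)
Your argument is correct and follows essentially the same route as the paper: apply the linear constraint to the approximations $w_n\in C_0^2$, pass to the limit by dominated convergence using the bounds in hypothesis (2) together with $c(\mu,\rho)<\infty$ and $\mu(N\times\bbR^d_u)=0$, then combine the resulting identity for $w$ with the pointwise inequalities \eqref{eqn: continuation}--\eqref{eqn: intervention} to get $c(\mu,\rho)\geq I^V$, and saturate with $(\mu^*,\rho^*)$ via \eqref{eqn: continuation a e}--\eqref{eqn: intervention a e}. No gaps; this matches the paper's verification argument step for step.
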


\begin{proof}[Proof of Lemma \ref{lem: verification}]
Let $(\mu, \rho)$ be any pair satisfying \eqref{app: LP constraint Rd} and $c(\mu, \rho)<\infty$. 
We have
\begin{align*}
&\quad \int_{\bbR^d_x\times\bbR^d_u} Aw(x, u)\mu(dx, du) + \int_{\bbR^d_x\times V} Bw(x, v) \rho(dx, dv) \\
&= \int_{\bbR^d_x\times\bbR^d_u} Aw_n(x, u)\mu(dx, du) + \int_{\bbR^d_x\times V} Bw_n(x, v) \rho(dx, dv) \\
&=0.
\end{align*}
The first term is well-defined since $Aw$ is defined $\mu$-everywhere. The second equality follows from the second condition and dominated convergence theorem. Hence
$$
c(\mu, \rho)=\int_{\bbR^d_x\times \bbR^d_u} (C_A(x, u) + Aw(x, u)) \mu(dx, du) + \int_{\bbR^d_x\times V} (C_B(x, v) + Bw(x, v))\rho(dx, dv)\geq I^V,$$ 
where the last inequality is due to \eqref{eqn: continuation}-\eqref{eqn: intervention}, and the equality holds if and only if \eqref{eqn: continuation a e}-\eqref{eqn: intervention a e} are satisfied. 
\end{proof}

Therefore, finding an explicit solution of a linear programming is possible  if we can determine a suitable value function from \eqref{eqn: continuation a e}-\eqref{eqn: intervention a e}.

\subsection{Verification of Proposition \ref{lem: combined impulse}}

In this section, we provide an explicit solution of  the following linear programming problem:
\begin{equation}\label{app: LP cost}
I(a, r, l, k, h) = \inf_{(\mu, \rho)} \int_{\bbR_x\times\bbR_u} (rx^2 + l u^2) \mu(dx, du) + \int_{\bbR_x\times\bbR_\xi\setminus\{0_{\xi}\}} (k+h|\xi|) \rho(dx, d\xi), 
\end{equation}
where $\mu\in \scrP(\bbR_x\times \bbR_u)$ and $\rho \in \calM(\bbR_x\times \bbR_\xi\setminus\{0_{\xi}\})$ satisfy
\begin{equation}\label{app: LP constraint}
\int_{\bbR_x\times \bbR_u} \big(\frac{1}{2}a f''(x) + uf'(x)\big) \mu(dx, du) + \int_{\bbR_x\times \bbR_\xi\setminus\{0_{\xi}\}} \big(f(x+\xi) - f(x)\big) \rho(dx, d\xi)=0, 
\end{equation}
for any $f\in C_0^2(\bbR)$. 
 The following lemma, whose proof is given in the Appendix, and Theorem \ref{theo: sol combined impulse} establish the existence of the value function $(w, I^V)$ for \eqref{app: LP cost}-\eqref{app: LP constraint}. 
\begin{figure}%[h!]
\begin{center}
\includegraphics[width =10cm]{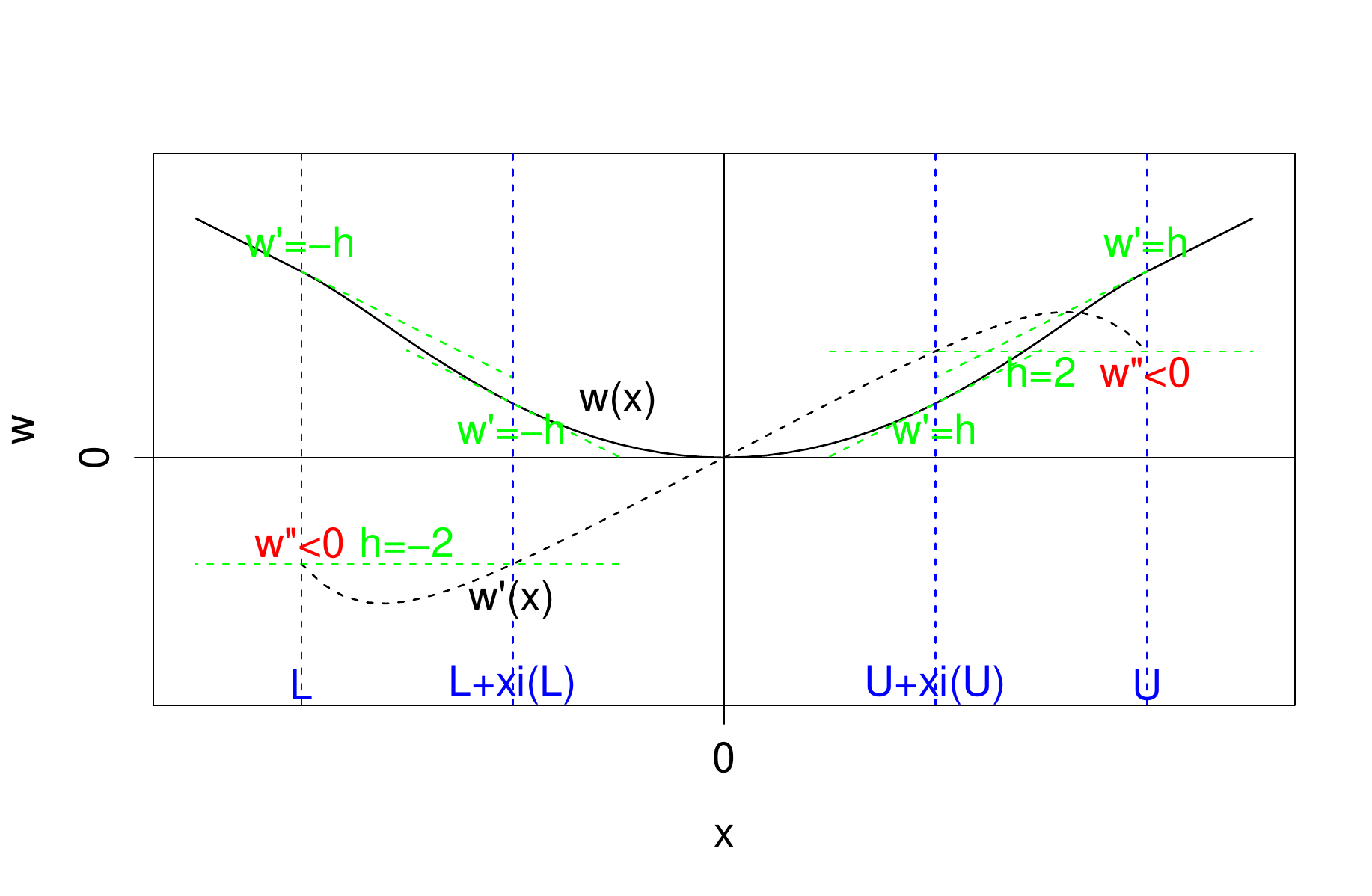}
\end{center}
\caption{Value function for combined regular and impulse control}\label{fig: sol_stoch_impulse}
\end{figure}

\begin{lem}[Value function for combined regular and impulse control]\label{lem: value function} 
There exist $U>\xi^* > 0$,  $I^V >0$, and $w\in  C^1(\bbR)\cap C^2(\bbR\setminus\{U, U\})$ such that
\begin{align}
&Aw(x, u^*(x)) + C_A(x, u^*(x)) = I^V, \quad x\in (-U, U), \label{eqn: pde}\\
&Bw(x, -\text{sgn}(x)\xi^*) + C_B(x, -\text{sgn}(x)\xi^*) = 0, \quad x\in \{-U, U\}, \label{eqn: C0 fit}%\\
%&w'(x) = w'(x - \text{sgn}(x) \xi^*)= \text{sgn}(x)h ,\quad x\in \{-U, U\} \label{eqn: C1 fit} \\
%&w''(x) < 0, \quad x\in \{-U, U\}, \label{eqn: cond A}\\
%&w'(x) \in \begin{cases} (-\infty,  -h),& -U<x < -U +\xi^*,\\  (-h, h),& -U + \xi^*<x<U -  \xi^*,\\ (h, \infty), &U-\xi^*<x< U. \end{cases}\label{eqn: cond B}
\end{align}
where $u^*(x)$ is defined by 
 \begin{equation}\label{eqn: u star}
u^*(x):=\underset{u\in \bbR_u}{\text{Argmin }} Aw(x, u) + C_A(x, u)= -\frac{w'(x)}{2l}.
\end{equation}
%see Figure \ref{fig: sol_stoch_impulse}. 
More precisely,  we have (c.f. Figure \ref{fig: sol_stoch_impulse})
\begin{equation}\label{eqn: val combined ctrl}
w(x) =
\begin{cases}
  (rl)^{1/2} x^2 - 2 a l \ln \Kummer(\frac{1-\iota}{4}; \frac{1}{2};\Big( \frac{r}{a^2 l}\Big ) ^{1/2}x^2), & |x|\leq U,\\
  w(U)+ h(|x|-U) , &|x|> U,
  \end{cases}
\end{equation}
where $\Kummer$ is the Kummer confluent hypergeometric function (see Section \ref{sec: kummer}) and 
\begin{equation*}
I^V = \iota \sqrt{a^2rl} ,
\end{equation*}
for some $\iota\in(0, 1)$.
Moreover, $w$ satisfies the following conditions 
\begin{align}
%&Aw(x, u^*(x)) + C_A(x, u^*(x)) = I^V, \quad x\in (-U, U), \label{eqn: pde}\\
%&Bw(x, -\text{sgn}(x)\xi^*) + C_B(x, -\text{sgn}(x)\xi^*) = 0, \quad x\in \{-U, U\}, \label{eqn: C0 fit}\\
&w'(x) = w'(x - \text{sgn}(x) \xi^*)= \text{sgn}(x)h ,\quad x\in \{-U, U\}, \label{eqn: C1 fit} \\
&w''(x) < 0, \quad x\in \{-U, U\}, \label{eqn: cond A}\\
&w'(x) \in \begin{cases} (-\infty,  -h),& -U<x < -U +\xi^*,\\  (-h, h),& -U + \xi^*<x<U -  \xi^*,\\ (h, \infty), &U-\xi^*<x< U, \end{cases}\label{eqn: cond B}
\end{align}
and $w$, $\xi^*$, $U$ and $I^V$ depend continuously on the parameters $(a, r, k, h)$. 
\end{lem}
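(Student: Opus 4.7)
The plan is to construct $w$ explicitly on $[-U,U]$, extend it affinely outside, and determine $(I^V, U, \xi^*)$ by a free-boundary system. Inside the continuation region, inserting the minimizer $u^*(x) = -w'(x)/(2l)$ into \eqref{eqn: pde} yields the Riccati-type equation
$$
\frac{a}{2}w''(x) - \frac{(w'(x))^2}{4l} + rx^2 - I^V = 0.
$$
The quadratic source $rx^2$ is absorbed by the ansatz $w(x) = \sqrt{rl}\,x^2 + v(x)$, and the remaining quadratic gradient term is linearized by the Cole--Hopf-type transformation $v(x) = -2al\ln\phi(x)$. A direct computation reduces the equation to
$$
a^2 l\,\phi''(x) - 2a\sqrt{rl}\,x\,\phi'(x) + (I^V - a\sqrt{rl})\,\phi(x) = 0,
$$
and the substitution $y = \sqrt{r/(a^2 l)}\,x^2$ puts this into Kummer's confluent hypergeometric equation with parameters $\bigl((1-\iota)/4,\,1/2\bigr)$, where $\iota := I^V/(a\sqrt{rl})$. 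The even solution regular at the origin is $\Kummer((1-\iota)/4;\,1/2;\,y)$, which produces the formula \eqref{eqn: val combined ctrl}. Outside $[-U,U]$, $w$ is extended affinely with slope $\pm h$, corresponding to paying the proportional cost to jump back to the continuation region.

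Three unknowns remain: $\iota\in(0,1)$, $U>0$, and $\xi^*\in(0,U)$. They are fixed by three boundary conditions: the $C^1$ smooth-fit $w'(\pm U)=\pm h$; the first-order optimality for the jump destination $w'(\pm(U-\xi^*))=\pm h$, obtained by minimizing $\xi\mapsto w(\pm U + \xi)+k+h|\xi|$; and the value-matching $w(\pm U)-w(\pm(U-\xi^*)) = k + h\xi^*$ coming from \eqref{eqn: C0 fit}. By symmetry only the positive half-line needs to be treated. This leaves a system of three transcendental equations in $(\iota, U, \xi^*)$ whose coefficients are explicit functions of $(a,r,l,k,h)$, $\Kummer$ and $\Kummer'$.

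The existence and uniqueness of a solution in the prescribed ranges, together with the qualitative properties \eqref{eqn: cond A}--\eqref{eqn: cond B}, should follow from monotonicity of $\Kummer(b;\,1/2;\,y)$ in $b\in(0,1/4)$ and its strict log-convexity in $y\geq 0$, combined with asymptotics as $y\to\infty$. These properties should imply that $w'$ is increasing on $(0, x_0)$ and decreasing on $(x_0, U)$ for some inflection point $x_0\in(U-\xi^*,U)$, so that the equation $w'(x)=h$ has exactly two solutions on $(0,U]$, forcing $U$ and $U-\xi^*$ to be uniquely determined once $\iota$ is fixed; the value-matching equation then pins down $\iota$ uniquely, and the inequalities \eqref{eqn: cond A}--\eqref{eqn: cond B} are read off the same monotonicity diagram. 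Continuous dependence of $(w,\xi^*,U,I^V)$ on $(a,r,l,k,h)$ follows from the implicit function theorem applied to the free-boundary system, provided its Jacobian is non-degenerate at the solution, which can again be checked via the monotonicity of $\Kummer$.

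The main obstacle is not the formal reduction to Kummer's equation, which is routine, but the quantitative analysis of $\Kummer$ required to solve the free-boundary system and to establish \eqref{eqn: cond A}--\eqref{eqn: cond B}. Concretely, one must show that $w$ is strictly convex on a central interval and strictly concave on the two outer pockets $(\pm(U-\xi^*),\pm U)$, with $w'$ reaching $\pm h$ precisely at each endpoint of these pockets. This geometric picture translates into zero-counting and sign information for $\Kummer'$ and $\Kummer''$ evaluated along explicit parameter-dependent curves, which is where most of the technical work lies.
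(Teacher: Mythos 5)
Your proposal follows essentially the same route as the paper: the Cole--Hopf / Weber-equation reduction to the Kummer function, the same three free-boundary equations (smooth fit $w'(\pm U)=\pm h$, first-order optimality $w'(\pm(U-\xi^*))=\pm h$ at the jump target, and value matching), existence of $(\iota,U,\xi^*)$ by analyzing the shape of $w'$ at fixed $\iota$ and then an intermediate-value argument in $\iota$, and continuity via the implicit function theorem. The only caveat is that the Kummer facts you list are slightly weaker than what is actually needed: the paper works with the ratio $g=\Kummer(a{+}1,b{+}1,z)/\Kummer(a,b,z)$ and proves not just $g'>0$ (your log-convexity) but also $3g'+2zg''>0$ to get the concavity of $w'$ on $[0,\bar x_\iota]$, and the monotonicity in $\iota$ is of the logarithmic derivative of $\Kummer$ with respect to its first parameter (established by a nontrivial series computation with $\partial_a\Kummer$), not of $\Kummer$ itself --- precisely the ``sign information for $\Kummer'$ and $\Kummer''$'' you correctly flag as the technical core.
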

\begin{rmk}
Equations \eqref{eqn: pde} and \eqref{eqn: C0 fit} correspond essentially to \eqref{eqn: continuation a e} and \eqref{eqn: intervention a e}. The interval $(-U, U)$ is called continuation region.  Equation \eqref{eqn: C1 fit} is the so called ``smooth-fit'' condition and guarantees that $w$ is a $C^1$ function. Equations \eqref{eqn: cond A} and \eqref{eqn: cond B} characterize the growth of the derivatives of $w$ and will be useful in the proof of Theorem \ref{theo: sol combined impulse}.
\end{rmk}

Proposition \ref{lem: combined impulse} is a direct consequence of the following theorem. 

\begin{theo}[Combined regular and impulse control.]\label{theo: sol combined impulse}
For any parameters $a, r, l, k >0$ and $h \geq 0$, we have 
\begin{enumerate}
\item
The pair $(w, I^V)$ in Lemma \ref{lem: value function} is the value function of \eqref{app: LP cost}-\eqref{app: LP constraint} in the sense of Lemma \ref{lem: verification}. 
In particular, the optimal cost of \eqref{app: LP cost}-\eqref{app: LP constraint} is given by  $I = I^V$.

\item  Let  $p^*(x) \in C^0([-U, U])\cap C^2((-U, U)\setminus \{-U+\xi^*, U - \xi^*\})$ be a solution of
\begin{equation}\label{eqn: adjoint pde jump}
\begin{cases}
&\frac{1}{2}a p''(x) -(u^*(x)p(x))' =0, \quad x\in (-U, U)\setminus \{-U+ \xi^*,  U - \xi^*\},\\
&p(-U) = p(U) = 0, \\
&\frac{1}{2}a p'((-U)+) = p'((-U+ \xi^*
)-)) -\frac{1}{2}a (p'((-U+ \xi^*)+ ) , \\
&\frac{1}{2}a p'(U-) =   p'((U-\xi^*)+))-\frac{1}{2}a(p'((U- \xi^*)-) , \\
&\int_{-U}^U p(x) = 1,
\end{cases}
\end{equation}
write $ \rho^*_-, \rho^*_+\in \bbR^+$  for
\begin{equation}\label{eqn: boundary mes jump}
\rho_-^* = \frac{1}{2}a p'((-U)+) , \quad \rho_+^* = -\frac{1}{2}a p'(U-),
\end{equation}
and recall that $u^*$ is  given by \eqref{eqn: u star}.
Then  the optimum of \eqref{app: LP cost}-\eqref{app: LP constraint}  is attained by 
\begin{equation}\label{eqn: optimum jump}
\mu^*(dx, du) = p^*(x)dx\otimes \delta_{u^*(x)}(du), \quad
 \rho^*(dx, d\xi) = \rho^*_- \delta_{(-U, \xi^*)}+ \rho^*_+ \delta_{(U, -\xi^*)}.
\end{equation}
\end{enumerate}
\end{theo}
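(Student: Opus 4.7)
The plan is to invoke the verification Lemma \ref{lem: verification} with exceptional set $N = \{-U, U\}$ and the pair $(w, I^V)$ from Lemma \ref{lem: value function}. This will immediately yield the lower bound $I \geq I^V$ once the three hypotheses of that lemma are verified; exhibiting an admissible pair $(\mu^*, \rho^*)$ with $c(\mu^*, \rho^*) = I^V$ will then close the gap. The equality conditions \eqref{eqn: continuation a e}--\eqref{eqn: intervention a e} force the optimizer to be precisely the pair described in the statement: $\mu^*$ must live on the graph of the feedback $u^*(x) = -w'(x)/(2l)$ over the continuation region $[-U, U]$, and $\rho^*$ must concentrate on the two points $(\pm U, \mp \xi^*)$ where the smooth-fit relation \eqref{eqn: C0 fit} holds.

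I would verify the three hypotheses as follows. For hypothesis (1), since $A$ is uniformly elliptic, the LP constraint \eqref{app: LP constraint} forces the $x$-marginal of $\mu$ to be absolutely continuous with respect to Lebesgue measure on $\bbR_x$ by elliptic regularity for measure-valued solutions of Fokker-Planck type equations, hence $\mu(N \times \bbR_u) = 0$. For hypothesis (2), I would take $w_n := \chi_n (w \ast \eta_{1/n})$ with $\chi_n$ a smooth cutoff supported in $[-n, n]$ and $\eta_{1/n}$ a standard mollifier; since $w \in C^1(\bbR) \cap C^2(\bbR \setminus N)$ with $w''$ bounded on $\bbR \setminus N$ and $w$ globally Lipschitz (affine outside $[-U, U]$), one obtains $|Aw_n| \leq \theta(1 + C_A)$ and $|Bw_n(x,\xi)| \leq L|\xi| \leq \theta(k + h|\xi|)$ when $h > 0$, while when $h = 0$ the function $w$ is bounded and $|Bw_n| \leq 2\|w\|_\infty \leq \theta k$. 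For hypothesis (3), the continuation inequality \eqref{eqn: continuation} follows on $(-U, U)$ from the quadratic convexity in $u$ of $Aw + C_A$ combined with \eqref{eqn: pde}, and on $|x| > U$ from the explicit affine form of $w$ reducing the claim to $rx^2 \geq I^V + h^2/(4l)$; the last estimate at $|x| = U$ can be extracted from the construction of $U$ in Lemma \ref{lem: value function}. The intervention inequality \eqref{eqn: intervention} is established by showing $g_x(\xi) := w(x + \xi) - w(x) + k + h|\xi| \geq 0$ for each $x$, via a case analysis anchored at the equality points $x \in \{\pm U\}$ and using the monotonicity of $w'$ provided by \eqref{eqn: cond B}.

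For the optimizer, I would first solve \eqref{eqn: adjoint pde jump} for $p^*$: on each of the three subintervals $(-U, -U+\xi^*)$, $(-U+\xi^*, U-\xi^*)$, $(U-\xi^*, U)$, this is the linear second-order ODE $\frac{1}{2}ap'' = (u^*p)'$, and the prescribed jumps of $p'$ at $\pm U \mp \xi^*$ encode the influx of mass due to the impulses occurring at $\pm U$; together with the Dirichlet conditions $p^*(\pm U) = 0$ and the normalization $\int p^* = 1$, this uniquely determines a positive density. I would then verify the LP constraint for $(\mu^*, \rho^*)$ by integrating $\frac{1}{2}af''(x)p^*(x) + u^*(x)f'(x)p^*(x)$ by parts twice on each subinterval: the interior contributions vanish by the ODE, boundary contributions at $\pm U$ cancel thanks to $p^*(\pm U) = 0$, and the jump conditions produce exactly $Bf(-U, \xi^*)\rho^*_- + Bf(U, -\xi^*)\rho^*_+$, which matches $\int Bf\, d\rho^*$. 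Since the supports of $\mu^*$ and $\rho^*$ lie within the equality sets for \eqref{eqn: pde} and \eqref{eqn: C0 fit} respectively, Lemma \ref{lem: verification} delivers $c(\mu^*, \rho^*) = I^V$, which combined with the lower bound yields $I = I^V$ and proves optimality of $(\mu^*, \rho^*)$.

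The main obstacle is the global verification of the intervention inequality $Bw + C_B \geq 0$: inside the continuation region $(-U, U)$, $|w'|$ may substantially exceed $h$, so the fixed cost $k$ must compensate for a potentially steep drop in $w$, and the correct sign relies on the non-trivial geometric information encoded in conditions \eqref{eqn: cond A}--\eqref{eqn: cond B}. A secondary but delicate point is showing that \eqref{eqn: adjoint pde jump} admits a positive normalizable solution, which rests on the explicit Kummer representation \eqref{eqn: val combined ctrl} of $w$ and on the specific sign structure of the feedback $u^*$.
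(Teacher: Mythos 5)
Your proposal is correct and follows essentially the same route as the paper: it verifies the three hypotheses of Lemma \ref{lem: verification} for the pair $(w, I^V)$ of Lemma \ref{lem: value function} (cutoff/mollification for hypothesis (2), the properties \eqref{eqn: pde}--\eqref{eqn: cond B} for hypothesis (3)), and then checks the LP constraint for $(\mu^*,\rho^*)$ by piecewise integration by parts against the adjoint system \eqref{eqn: adjoint pde jump}, with the supports of $\mu^*,\rho^*$ lying in the equality sets. The only deviations are minor: for hypothesis (1) the paper excludes atoms of the $x$-marginal by an explicit sequence of test functions with $f_n''\to\ind{x}$ rather than your appeal to Fokker--Planck-type regularity (both yield the needed $\mu(N\times\bbR_u)=0$), and your mollified cutoff $w_n=\chi_n(w\ast\eta_{1/n})$ lands directly in $C^2_0$, which is if anything a cleaner application of the verification lemma than the paper's $w\varphi_n$ followed by an in-proof mollification.
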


\begin{proof}
$~$\\
\begin{enumerate}
\item
Consider the function $w$ defined in Lemma \ref{lem: value function}.
First we show that the three conditions in Lemma \ref{lem: verification} are satisfied by $w$.

 i) Note that $N = \{- U, U\}$.  For any $(\mu, \rho)$ satisfying the LP constraint, we show that $\mu(\{x\}\times \bbR_u) =0, \forall x\in \bbR_x$. In particular, $\mu(N\times \bbR_u) = 0$.  Indeed, let $f_n\in C_0^2(\bbR_x)$ be a sequence of test functions such that $f''_n\to \ind{x}$, $\| f_n\|_\infty \vee \|f_n'\|_\infty \to 0$ and there exists $\theta\in \bbR^+$ such that
{\color{black}\begin{equation*}
|Af_n(x,u)| \leq \theta(1+ C_A(x,u)), \quad |Bf_n(x,\xi)| \leq \theta C_B(x,\xi)\quad \forall x\in \mathbb R_x, u\in \mathbb R_u, \xi\in \mathbb R_\xi \setminus\{0_\xi\}. 
\end{equation*}}
For example, let $\varphi\in C^2_0$ with $\varphi''$ being a piece-wise linear function such that $\varphi''(\pm\infty) = \varphi''(-1)=\varphi''(1) = \varphi''(3) = \varphi''(5) = 0$, $\varphi''(0) = \varphi''(4)=1$ and $\varphi''(2)= -2$ and take {\color{black}$f_n(z) = \frac{1}{n^2}\varphi(n(z-x))$.}  Since $c(\mu, \rho) < \infty$, we have by dominated convergence theorem 
\begin{equation*}
\mu(\{x\}\times \bbR_u) = \lim_n \int Af_n(z, u)\mu(dz\times du) + \int Bf_n(z, \xi)\rho(dz\times d\xi) = 0.
\end{equation*}

ii) Let $\varphi_n \in C^2_0$ be a sequence of indicator functions such that $\varphi_n(x) = 1$ for $|x| \leq n$ and $$\sup_n \|\varphi_n\|_{C^2_0}:=\|\varphi_n\|_\infty\vee\|\varphi_n'\|_\infty\vee\|\varphi_n''\|_\infty <\infty.$$
Let $w_n=w\varphi_n$. Then $w_n$ is $C^2$ except at $\{-U, U\}$ and is of compact support. For each $n$, $w_n$ satisfies also the LP constraint
\begin{equation*}
\int Aw_n d\mu + \int Bw_n d\rho = 0.
\end{equation*}
Indeed, let $\varphi_\delta$ be any convolution kernel and  $w_{n,\delta}:=w_n*\varphi_\delta$. So $w_{n, \delta}$ satisfies the LP constraint. Moreover,  $A w_{n, \delta} \to Aw_n$ for $x\notin \{-U, U\}$,  $Bw_{n, \delta}\to Bw_n$ for any $ (x, u)$ and $\sup_\delta \|w_{n, \delta}\|_{C^2_0} < \|w_n\|_{C^2_0} < \infty$. By dominated convergence, $w_n$ satisfies the LP constraint. Finally, a direct computation shows that for some constants $\theta$ and $\theta'$,
\begin{equation*}
|Aw_n| \leq  \theta' \| \varphi_n\|_{C^2_0}(|w|+ |w'| + |w''|)  \leq \theta (1 +C_A), \quad |Bw_n|\leq 2\|\varphi_n\|_{\infty}|w_n| \leq \theta C_B.
\end{equation*}
So the second condition is satisfied. 

iii) By \eqref{eqn: pde} and \eqref{eqn: cond A}, we have $Aw + C_A \geq 0$ for $x\notin \{-U, U\}$. By \eqref{eqn: C0 fit}-\eqref{eqn: cond B} and definition of $w$ outside $[U, U]$, we have $Bw + C_B\geq 0 $.\\

By Lemma \ref{lem: verification}, we then conclude that $I = I^V$. \\

\item We need to show that $\mu^*$ and $\rho^*$ satisfy the LP constraint. Assume that  $\mu^*$ and $\rho^*$ are given by \eqref{eqn: optimum jump}, then 
by integration by parts, the LP constraint holds if 
$p^*(x)$ is solution of \eqref{eqn: adjoint pde jump}. It is easy the see that the latter admits a unique solution. 
\end{enumerate}
\end{proof}

\appendix
%%%%%%%%%%%%%%%%%%%%%%%%%%%%%%%%%%%%%%%%%%%%%%%%%%%%%%%%%%
%\newpage
%\appendixtitleon
%\appendixtitletocon
%%%%%%%%%%%%%%%%%%%%%%%%%%%%%%%%%%%%%%%%%%%%
%\begin{subappendices}
%%%%%%%%%%%%%%%%%%%%%%%%%%%%%%%%%%%%%%%%%%%%%%%%%%%%%%%%%%

\section{Kummer confluent hypergeometric function $\Kummer$}\label{sec: value function}\label{sec: kummer}
%%%%%%%%%%%%%%%%%%%%%%%%%%%%%%%%%
We collect here some properties of the Kummer confluent hypergeometric function $\Kummer$ which are useful to establish the existence of value functions of combined control problems in dimension one. Recall that $\Kummer$ is defined as
\begin{equation*}
\Kummer(a, b, z) = \sum_{k=0}^\infty \frac{(a)_k}{(b)_k}\frac{z^k}{k!},
\end{equation*}
with $(a)_k$ the Pochhammer symbol.
\begin{lem}We have the following properties.  \label{lem: kummer}
\begin{enumerate}
\item
The function $\Kummer$ admits the following integral representation
\begin{equation*}
\Kummer(a, b, z) =\frac{\Gamma(b)}{\Gamma(b-a)\Gamma(a)}\int_0^1e^{zt}t^{a-1}(1-t)^{b-a-1}dt.
\end{equation*}
It is an entire function of $a$ and $z$ and a meromorphic function of $b$. 
\item 
We have
\begin{align*}
\frac{\partial }{ \partial z}\Kummer(a, b, z)& = \frac{a}{b}\Kummer(a+1, b+1, z),\\
\frac{\partial }{\partial a} \Kummer(a, b, z)&=\sum_{k=0}^\infty \frac{(a)_k}{(b)_k}\frac{z^k}{k!}\sum_{p=0}^{k-1}\frac{1}{p+a}. 
\end{align*}
\item
We have
\begin{equation*}
(a+1) z \Kummer(a+2, b+2, z) + (b+1)(b-z)\Kummer(a+1, b+1, z) - b(b+1)\Kummer(a, b, z) =0.
\end{equation*}
\item 
We have
\begin{equation*}
\Kummer(a, b, z) = \frac{\Gamma(a)}{\Gamma(b-a)}e^{i\pi a}z^{-a}(1+O(\frac{1}{|z|})) + \frac{\Gamma(b)}{\Gamma(a)}e^z z^{a-b}(1 + O(\frac{1}{|z|})),
\end{equation*}
as $z\to \infty$. 
\item
Consider the Weber differential equation
\begin{equation}\label{eqn: weber}
{w}''(x) - (\frac{1}{4}x^2 +\theta){w}(x)=0.
\end{equation}
The even and odd solutions of this equation are given, respectively, by 
\begin{align}
\wt{w}(x; \theta)& = ~ e^{-\frac{1}{4}x^2}\Kummer(\frac{1}{2}\theta + \frac{1}{4}, \frac{1}{2}, \frac{1}{2}x^2), \label{eqn: weber odd}\\
 \bar{w}(x; \theta) &= x  e^{-\frac{1}{4}x^2}\Kummer(\frac{1}{2}\theta + \frac{3}{4}, \frac{3}{2}, \frac{1}{2}x^2).
\end{align}
\end{enumerate}
\end{lem}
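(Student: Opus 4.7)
The plan is to treat each of the five items essentially as a standard computation about the confluent hypergeometric function, working from the defining series $\Kummer(a,b,z) = \sum_{k\geq 0}\frac{(a)_k}{(b)_k}\frac{z^k}{k!}$ and invoking classical identities; I would cite a standard reference (e.g.\ Abramowitz--Stegun or NIST DLMF, Chapter~13) for any identity I do not reprove inline, since none of these facts is original.

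For item (1), I would write the beta function $B(a,b-a)=\int_0^1 t^{a-1}(1-t)^{b-a-1}\,dt$, expand $e^{zt}=\sum_k (zt)^k/k!$ inside the integral, interchange sum and integral (justified by absolute convergence on compacts when $\operatorname{Re} b>\operatorname{Re} a>0$), and match term by term to $\Kummer$; the general case in $a,b$ then follows by analytic continuation, which also gives the entire/meromorphic structure. For item (2), differentiating the series in $z$ and using $(a)_k k = a(a+1)_{k-1}\cdot k/(a)$ combined with the shift $k\mapsto k+1$ yields the identity directly; the $a$-derivative follows from $\frac{\partial}{\partial a}\log (a)_k = \sum_{p=0}^{k-1} (p+a)^{-1}$ applied to each term of the series. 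For item (3), I would verify the three-term contiguous relation by inserting the series on both sides, collecting the coefficient of $z^k/k!$, and checking the algebraic identity
\begin{equation*}
(a+1)k(b)(b+1)\tfrac{(a+2)_{k-1}}{(b+2)_{k-1}} + (b+1)(b-\,\cdot\,)\tfrac{(a+1)_k}{(b+1)_k} - b(b+1)\tfrac{(a)_k}{(b)_k} = 0,
\end{equation*}
where the $(b-z)$ factor on the left is handled by splitting into the $b$-part and the $z$-part and shifting the index for the latter.

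For item (4), I would apply Watson's lemma to the integral representation from (1): split the integral into contributions near $t=1$ (which produces the $e^z z^{a-b}$ behavior) and near $t=0$ (giving the $z^{-a}$ behavior after analytic continuation of $z$ into a sector), with the Gamma prefactors emerging from the standard Laplace-type asymptotic expansion; alternatively one can cite this as the classical large-$z$ asymptotic of Tricomi. For item (5), I would substitute $w(x)=e^{-x^2/4}\Kummer(\tfrac12\theta+\tfrac14,\tfrac12,\tfrac12 x^2)$ into the Weber equation, compute $w''$ using Leibniz and the chain rule together with the derivative identity from (2), and check that the resulting expression vanishes; the key intermediate step is the Kummer ODE $z\,\Kummer_{zz}+(b-z)\Kummer_z - a\Kummer = 0$, which is itself a direct series check, and the odd solution is handled identically after absorbing the factor of $x$.

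The main obstacle, such as it is, is purely bookkeeping: keeping track of the parameter shifts in item~(3) and the Gamma-function prefactors in the asymptotic expansion of item~(4), where sign conventions and the choice of branch of $z^{-a}$ in the sector $|\arg z|<\pi$ must be consistent. Because all of these identities are classical, I would keep the presentation brief and lean on the references for any step that threatens to become a long calculation.
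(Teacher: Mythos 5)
Your proposal is correct: the paper's own ``proof'' of this lemma is a bare citation of Abramowitz--Stegun and of Ancarani--Gasaneo (the latter precisely for the $\partial_a$ formula in item 2), and your sketches are exactly the standard derivations behind those citations---termwise series manipulations for items 1--3, Watson's lemma/Tricomi asymptotics applied to the integral representation for item 4, and substitution reducing the Weber equation to the Kummer ODE $zf''+(b-z)f'-af=0$ for item 5. The only point worth flagging is that your Watson's-lemma computation will produce the classical prefactor $\Gamma(b)/\Gamma(b-a)$ in the $z^{-a}$ term, whereas the lemma as printed has $\Gamma(a)/\Gamma(b-a)$, an apparent typo in the statement rather than a defect of your argument.
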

\begin{proof} See \cite{abramowitz1972handbook, ancarani2008derivatives}. \end{proof}
\section{Proof of Lemma \ref{lem: value function}}

%\subsection{Existence of value function for \eqref{app: LP cost}-\eqref{app: LP constraint}} 

%In this section, we prove the following lemma. 

    We first look for $w$  in the continuation region $(-U, U)$. Define (the change of variable comes from \cite[pp.260]{oksendal2005})
\begin{equation*}
w(x):= -2 al \ln \wt{w}(\frac{x}{\alpha}; -\frac{\iota}{2}), \quad \alpha^2 = \frac{1}{2}a(r^{-1}l)^{1/2},\quad \iota = \frac{I^V}{(a^2rl)^{1/2}},
\end{equation*}
where $\wt{w}$ is the odd solution \eqref{eqn: weber odd} of the Weber differential equation \eqref{eqn: weber}. 
Then ${w}$ satisfies the following ODE
\begin{equation*}
\frac{1}{2}aw''(x) - \frac{1}{4l}(w'(x))^2 +  rx^2 = I^V,
\end{equation*}
which is exactly \eqref{eqn: pde}. 
Hence we conjecture that the solution in the continuation region $(-U, U)$ is given by 
\begin{align*}
w(x) &= -2 al \ln\Big( e^{-\frac{1}{4}x^2/\alpha^2}\Kummer( \frac{1-\iota}{4}, \frac{1}{2}, \frac{1}{2 \alpha^2}x^2)\Big)\\
&=(rl)^{1/2}x^2 - 2al \ln\Big(\Kummer(\frac{1-\iota}{4}, \frac{1}{2}, \frac{1}{2 \alpha^2}x^2)\Big).
\end{align*}
Now we show that there exist suitable values $U$, $\xi(U)$ and $\iota$  such that  $0 \leq U +\xi(U) \leq U$,  $\iota \in (0, 1)$  and Conditions  \eqref{eqn: C0 fit}-\eqref{eqn: cond B} are satisfied.  Let 
$$
h(x; \iota):= \frac{\partial w}{\partial x}= 2(rl)^{1/2}(1 - (1-\iota)g(\frac{1}{2\alpha^2}x^2; \iota))x,$$
with 
\begin{equation*}
g(z; \iota) = \frac{\Kummer(\frac{1-\iota}{4} +1, \frac{1}{2}+1, z)}{\Kummer(\frac{1-\iota}{4}, \frac{1}{2}, z)}.
\end{equation*}
We have the following lemma.
\begin{lem}\label{lem: g}
The function $g(z;\iota)$ satisfies 
\begin{equation*}
g(z; \iota) \to\begin{cases} 1, & z\to 0^+,\\ \frac{2}{1-\iota}, &z\to +\infty,\end{cases}	
\end{equation*}
and
\begin{equation*}
g'(z; \iota) >0, \quad \forall z\in [0, +\infty). 
\end{equation*}
\end{lem}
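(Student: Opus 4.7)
The plan is to exploit the integral (Euler) representation of the Kummer function from Lemma~\ref{lem: kummer}(1) to rewrite $g(z;\iota)$ as a constant multiple of an expectation under a one-parameter family of probability measures on $[0,1]$. The assumption that $\iota\in(0,1)$ (which is the range relevant to Lemma~\ref{lem: value function}) makes the parameters $a:=\tfrac{1-\iota}{4}$ and $b-a=\tfrac{1}{2}-\tfrac{1-\iota}{4}=\tfrac{1+\iota}{4}$ both strictly positive, so the integral representation is legitimate for both $\Kummer(a,\tfrac12,z)$ and $\Kummer(a+1,\tfrac32,z)$. After a short Gamma-function calculation, the ratio takes the clean form
\begin{equation*}
g(z;\iota)\;=\;\frac{2}{1-\iota}\,\frac{\int_0^1 t\, e^{zt}\,t^{a-1}(1-t)^{-\frac12-a}\,dt}{\int_0^1 e^{zt}\,t^{a-1}(1-t)^{-\frac12-a}\,dt}\;=\;\frac{2}{1-\iota}\,\mathbb{E}_{\nu_z}[T],
\end{equation*}
where $\nu_z$ is the probability measure on $[0,1]$ with density proportional to $e^{zt}t^{a-1}(1-t)^{-1/2-a}$.

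With this representation, the two limits drop out immediately. For $z\to 0^+$, $\nu_0$ is the Beta$(a,\tfrac{1+\iota}{4})$ distribution, whose mean is $\tfrac{a}{a+(1+\iota)/4}=\tfrac{1-\iota}{2}$, giving $g(0^+;\iota)=1$. (Alternatively, $\Kummer(\cdot,\cdot,0)=1$ by definition of the series, which also yields $g(0^+;\iota)=1$.) For $z\to +\infty$, the asymptotic expansion in Lemma~\ref{lem: kummer}(4) gives $\Kummer(a+1,\tfrac32,z)\sim \frac{\Gamma(3/2)}{\Gamma(a+1)}e^z z^{a-1/2}$ and $\Kummer(a,\tfrac12,z)\sim \frac{\Gamma(1/2)}{\Gamma(a)}e^z z^{a-1/2}$, whose ratio simplifies to $\tfrac{1}{2a}=\tfrac{2}{1-\iota}$ using $\Gamma(3/2)=\tfrac12\Gamma(1/2)$ and $\Gamma(a+1)=a\Gamma(a)$.

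For strict monotonicity, I would differentiate $\mathbb{E}_{\nu_z}[T]=N(z)/D(z)$, where $N(z)=\int_0^1 t\,e^{zt}t^{a-1}(1-t)^{-1/2-a}dt$ and $D(z)$ is the normalizing integral. Differentiation under the integral sign (justified since the integrand is dominated by $e^z t^{a-1}(1-t)^{-1/2-a}$, an integrable function of $t$ on $[0,1]$ uniformly on any compact $z$-set) yields $D'(z)=N(z)$ and $N'(z)=\int_0^1 t^2 e^{zt}t^{a-1}(1-t)^{-1/2-a}dt$. Hence
\begin{equation*}
\frac{d}{dz}\mathbb{E}_{\nu_z}[T]\;=\;\frac{N'(z)D(z)-N(z)^2}{D(z)^2}\;=\;\mathbb{E}_{\nu_z}[T^2]-\mathbb{E}_{\nu_z}[T]^2\;=\;\mathrm{Var}_{\nu_z}(T)\;>\;0,
\end{equation*}
with strict positivity because $\nu_z$ has a continuous strictly positive density on $(0,1)$, so $T$ is not almost surely constant. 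Multiplying by $\tfrac{2}{1-\iota}>0$ gives $g'(z;\iota)>0$ on $[0,\infty)$.

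The main technical point—and really the only one requiring care—is the justification of differentiating under the integral sign and the integrability of the density near $t=1$, which both reduce to the inequalities $a>0$ and $\tfrac12-a>0$, i.e.\ $\iota\in(-1,1)$, a condition ensured by the range $\iota\in(0,1)$ used in the main argument. Everything else is a short manipulation of Gamma functions and an application of the Cauchy--Schwarz-style observation that a variance is non-negative. The representation as $\tfrac{2}{1-\iota}\,\mathbb{E}_{\nu_z}[T]$ is thus doing essentially all of the work, giving the two boundary values and the monotonicity from a single computation.
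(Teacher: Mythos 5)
Your proof is correct, and it takes a genuinely different route from the paper. The paper proves monotonicity by combining the contiguity and differentiation formulas for $\Kummer$ (Properties 2 and 3 of Lemma \ref{lem: kummer}) to derive the Riccati-type identity $g'(z) = g(z)\big(1-\frac{1-\iota}{2}g(z)\big) + \frac{1}{2z}\big(1-g(z)\big)$, and then argues qualitatively that $g$ cannot leave the band $[1,\frac{2}{1-\iota}]$, with the limits read off from the asymptotics of $\Kummer$; note that the band-invariance argument does not by itself make the strict inequality $g'>0$ on all of $[0,\infty)$ immediate, since inside the band the two terms of the identity have opposite signs. Your tilted-Beta representation $g(z;\iota)=\frac{2}{1-\iota}\,\mathbb{E}_{\nu_z}[T]$ sidesteps this entirely: the derivative is a constant multiple of $\mathrm{Var}_{\nu_z}(T)$, which is strictly positive, and the same representation (or the quoted asymptotics) delivers both boundary values, so the monotonicity claim is obtained in a sharper and more self-contained way. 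The only price is the restriction $0<\frac{1-\iota}{4}<\frac12$, i.e.\ $\iota\in(-1,1)$, needed for the Euler integral representation; as you observe, this covers the range $\iota\in(0,1)$ in which the lemma is invoked in the proof of Lemma \ref{lem: value function}, so nothing is lost. Your Gamma-function bookkeeping ($\Gamma(3/2)=\tfrac12\Gamma(1/2)$, $\Gamma(a+1)=a\Gamma(a)$, cancellation of $\Gamma(\tfrac12-a)$) and the domination argument for differentiating under the integral are both correct.
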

\begin{proof}
The limits of $g(z; \iota)$ follow from the asymptotic behaviour of $\Kummer$. To show that $g$ is increasing, we use Properties 2 and 3 in Lemma \ref{lem: kummer} and obtain
\begin{equation*}
g'(z) = g(z)\big(1-\frac{1-\iota}{2}g(z)\big) + \frac{1}{2z}\big(1 -g(z)\big).
\end{equation*}
Note that $g'(0) >0$, so $g>1$ near $x=0$. Since $g'(z) >0$ for $g(z)=1$ and $g'(z) <0$ for $g(z)=\frac{2}{1-\iota}$,  $g(z)$ cannot leave the band $[1, \frac{2}{1-\iota}]$. 
\end{proof}

We now state a second lemma.
\begin{lem}
The function $h(x;  \iota)$ satisfies the following properties. 
\begin{enumerate}
\item
For $\iota \in (0, 1)$, we have
\begin{equation*}
\frac{h(x; \iota)}{x}\to \begin{cases} 2(rl)^{1/2}\iota, & x\to 0+, \\ -2(rl)^{1/2}, &x\to +\infty.\end{cases}
\end{equation*}
Let $0< \bar{x}_\iota <\infty$ be the first zero of $h(x; \iota)$. We have
\begin{equation*}
h'(x; \iota) = \begin{cases} 2(rl)^{1/2} \iota>0, & x=0, \\ -2(rl)^{1/2} 2(1-\iota) \bar{x}_\iota g'(\bar{z}_\iota) <0, & x=\bar{x}_\iota, \end{cases}
\end{equation*}
where $\bar{z}_\iota = \frac{1}{2\alpha^2}\bar{x}^2_\iota$ and 
\begin{equation*}
h''(x; \iota) < 0,\quad x\in [0, \bar{x}_\iota].
\end{equation*}
\item
For $x\in (0, \infty)$, we have
\begin{equation*}
\frac{\partial}{\partial \iota}h(x; \iota) >0
\end{equation*}
and
\begin{equation*}
h(x; \iota) \to 2(rl)^{1/2}x, \quad \iota \to 1-. 
\end{equation*}
We have
\begin{equation*}
\bar{x}_\iota =O(\iota^{1/2}),  \quad \iota \to 0+
\end{equation*}
and hence
\begin{equation*}
\max_{x\in [0, \bar{x}_\iota]}h(x; \iota) \to 0, \quad \iota \to 0+. 
\end{equation*}
\end{enumerate}
\end{lem}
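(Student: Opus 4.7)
The plan is to exploit the Riccati-type ODE that $w$ satisfies on the continuation region. Since $h = w'$ and $\tfrac{1}{2}aw''-\tfrac{1}{4l}(w')^2+rx^2=I^V$, we have
\begin{equation*}
h'(x)=\frac{2}{a}\Big[I^V-rx^2+\tfrac{1}{4l}h(x)^2\Big],\qquad h''(x)=\frac{h(x)h'(x)}{al}-\frac{4rx}{a}.
\end{equation*}
From this together with Lemma \ref{lem: g}, the two limits of $h(x)/x$ are immediate: the factor $1-(1-\iota)g(x^2/(2\alpha^2);\iota)$ tends to $\iota$ as $x\to 0^+$ (since $g\to 1$) and to $-1$ as $x\to\infty$ (since $g\to 2/(1-\iota)$). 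Continuity and the opposite signs at the endpoints give the existence of a first zero $\bar x_\iota\in(0,\infty)$. Plugging $x=0$ into the Riccati identity gives $h'(0)=2I^V/a=2(rl)^{1/2}\iota$, confirming the stated value (and determining $I^V=\iota a\sqrt{rl}$). At $\bar x_\iota$ the defining relation $(1-\iota)g(\bar z_\iota;\iota)=1$ makes the non-derivative part of $h'$ vanish, leaving a strictly negative contribution proportional to $g'(\bar z_\iota)>0$ (by Lemma \ref{lem: g}).

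For the concavity claim I would use the Riccati identity directly: at $\bar x_\iota$ the factor $h(\bar x_\iota)=0$ yields $h''(\bar x_\iota)=-4r\bar x_\iota/a<0$. To rule out interior inflections I introduce $\Psi(x):=4rlx-h(x)h'(x)$, noting $\Psi(0)=0$, $\Psi'(0)=4rl(1-\iota^2)>0$ and $\Psi(\bar x_\iota)=4rl\bar x_\iota>0$. At any putative interior zero $x_0$ of $\Psi$, the Riccati identity forces $h''(x_0)=0$, so $\Psi'(x_0)=4rl-h'(x_0)^2$; since $|h'|<2\sqrt{rl}$ on $[0,\bar x_\iota]$ (which can be extracted from the explicit expression of $h'$ in terms of $g$ and the bounds $1\le g\le 1/(1-\iota)$), one gets $\Psi'(x_0)>0$, contradicting minimality. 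Hence $\Psi>0$ on $(0,\bar x_\iota]$, giving $h''<0$ there. (Parity of $w$ forces $h''(0)=0$, so the closed-interval form of the claim is a mild imprecision.)

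For Part 2 the cleanest approach is to differentiate the Riccati ODE in $\iota$. Writing $\eta:=\partial_\iota w$, $v:=\eta'=\partial_\iota h$, and using $\partial_\iota I^V=\sqrt{a^2rl}$, one obtains
\begin{equation*}
v'(x)-\frac{h(x)}{al}\,v(x)=2\sqrt{rl},\qquad v(0)=0,
\end{equation*}
with the initial condition reflecting the even dependence of $w$ on $x$. Solving by variation of constants with integrating factor $\mu(x)=\exp(-w(x)/(al))=\exp(-x^2/(2\alpha^2))\,\Kummer(\tfrac{1-\iota}{4};\tfrac12;x^2/(2\alpha^2))^2>0$ gives $v(x)=\frac{2\sqrt{rl}}{\mu(x)}\int_0^x\mu(s)ds>0$ for $x>0$, which is the monotonicity claim. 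The limit as $\iota\to 1^-$ follows from the continuity of $\Kummer$ in its first argument, since $(1-\iota)g(z;\iota)\to 0$ pointwise in $z$. For $\iota\to 0^+$, the Kummer ODE gives the expansion $g(z;\iota)=1+\tfrac{1+\iota}{3}z+O(z^2)$ near $z=0$, so the equation $g(\bar z_\iota;\iota)=1/(1-\iota)$ yields $\bar z_\iota\sim 3\iota$ and hence $\bar x_\iota=O(\iota^{1/2})$. Concavity from Part 1 then bounds $\max_{[0,\bar x_\iota]}h\le h'(0)\bar x_\iota=2(rl)^{1/2}\iota\cdot O(\iota^{1/2})=O(\iota^{3/2})\to 0$.

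The main obstacle will be the strict concavity on the whole interval: the Riccati identity gives the endpoint case for free, but ruling out interior inflections requires the $\Psi$-argument together with the uniform bound $|h'|<2\sqrt{rl}$, which has to be derived from the explicit $g$-representation rather than from the ODE alone. Everything else reduces to direct manipulations with the Kummer-function identities in Lemma \ref{lem: kummer} and the monotonicity and limits of $g$ from Lemma \ref{lem: g}.
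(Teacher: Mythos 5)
Your overall architecture is sound and, for Part 1, close in spirit to the paper (limits via Lemma \ref{lem: g}, the value $h'(0)=2I^V/a$, negativity at $\bar x_\iota$ from $g'>0$), but the concavity step as written rests on a false auxiliary claim. You assert that $|h'|<2\sqrt{rl}$ on $[0,\bar x_\iota]$ and that this follows from $1\le g\le 1/(1-\iota)$. Only the upper bound is true: from $h'/(2\sqrt{rl})=\bigl(1-(1-\iota)g(z)\bigr)-2(1-\iota)zg'(z)$ and $g'\ge 0$ one gets $h'\le 2\sqrt{rl}\,\iota$ on $[0,\bar x_\iota]$, but the lower bound $h'>-2\sqrt{rl}$ fails in general. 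Indeed, your own Riccati identity at the zero of $h$ gives $h'(\bar x_\iota)=\tfrac{2}{a}\bigl(I^V-r\bar x_\iota^2\bigr)=2\sqrt{rl}\,(\iota-\bar z_\iota)$, and $\bar z_\iota\to\infty$ as $\iota\to 1^-$ (since $g(\cdot;\iota)\to \Kummer(1,\tfrac32,\cdot)$ locally uniformly while the level $1/(1-\iota)$ blows up), so for $\iota$ close to $1$ one has $h'(\bar x_\iota)<-2\sqrt{rl}$; the bounds on $g$ alone cannot control the $zg'(z)$ term. The good news is that your $\Psi$-argument only needs the bound at the putative first zero $x_0$ of $\Psi$ in $(0,\bar x_\iota]$: there $\Psi(x_0)=0$ forces $h''(x_0)=0$ and $h(x_0)h'(x_0)=4rlx_0>0$; since $\Psi(\bar x_\iota)=4rl\bar x_\iota>0$ one has $x_0<\bar x_\iota$, hence $h(x_0)>0$ and therefore $h'(x_0)>0$, and then the (true) one-sided bound $h'\le 2\sqrt{rl}\,\iota$ yields $\Psi'(x_0)=4rl-h'(x_0)^2\ge 4rl(1-\iota^2)>0$, the desired contradiction. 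So the strategy survives, but you must replace the two-sided bound by this sign observation plus the upper bound; as stated, the step is not justified. For comparison, the paper avoids this issue entirely by computing $h''=-2\sqrt{rl}(1-\iota)\bigl(3g'(z)+2zg''(z)\bigr)z'(x)$ and showing $3g'+2zg''>0$ on $[0,\bar z_\iota]$ directly from the ODE for $g$.

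For Part 2 your route is genuinely different from the paper's and, once the smoothness of $h$ in $\iota$ is noted (immediate since $\Kummer$ is entire in its first argument and the denominator is positive), it is correct and cleaner: differentiating the Riccati equation in $\iota$ gives $v'-\tfrac{h}{al}v=2\sqrt{rl}$ with $v(0)=0$, and the integrating factor $e^{-w/(al)}>0$ yields $\partial_\iota h>0$ in one line, whereas the paper proves the same monotonicity by a term-by-term manipulation of the series coefficients of $\Kummer$ and $\partial_a\Kummer$. The remaining items ($\iota\to 1^-$ limit, $\bar z_\iota=O(\iota)$ hence $\bar x_\iota=O(\iota^{1/2})$, and $\max h\to 0$) follow the paper's argument; to make the asymptotics of $\bar z_\iota$ rigorous you should, as the paper does, use a lower bound for $g$ that is uniform in $\iota$ on a fixed neighbourhood of $0$ rather than a pointwise Taylor expansion. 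Incidentally, your expansion coefficient $(1+\iota)/3$ for $g$ near $z=0$ is the correct one (the coefficient written in the paper's proof is slightly off, harmlessly), and your remark that parity forces $h''(0)=0$, so the strict inequality in the lemma should be read on $(0,\bar x_\iota]$, is accurate.
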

\begin{figure}%[h]
\begin{center}
\includegraphics[width =10cm]{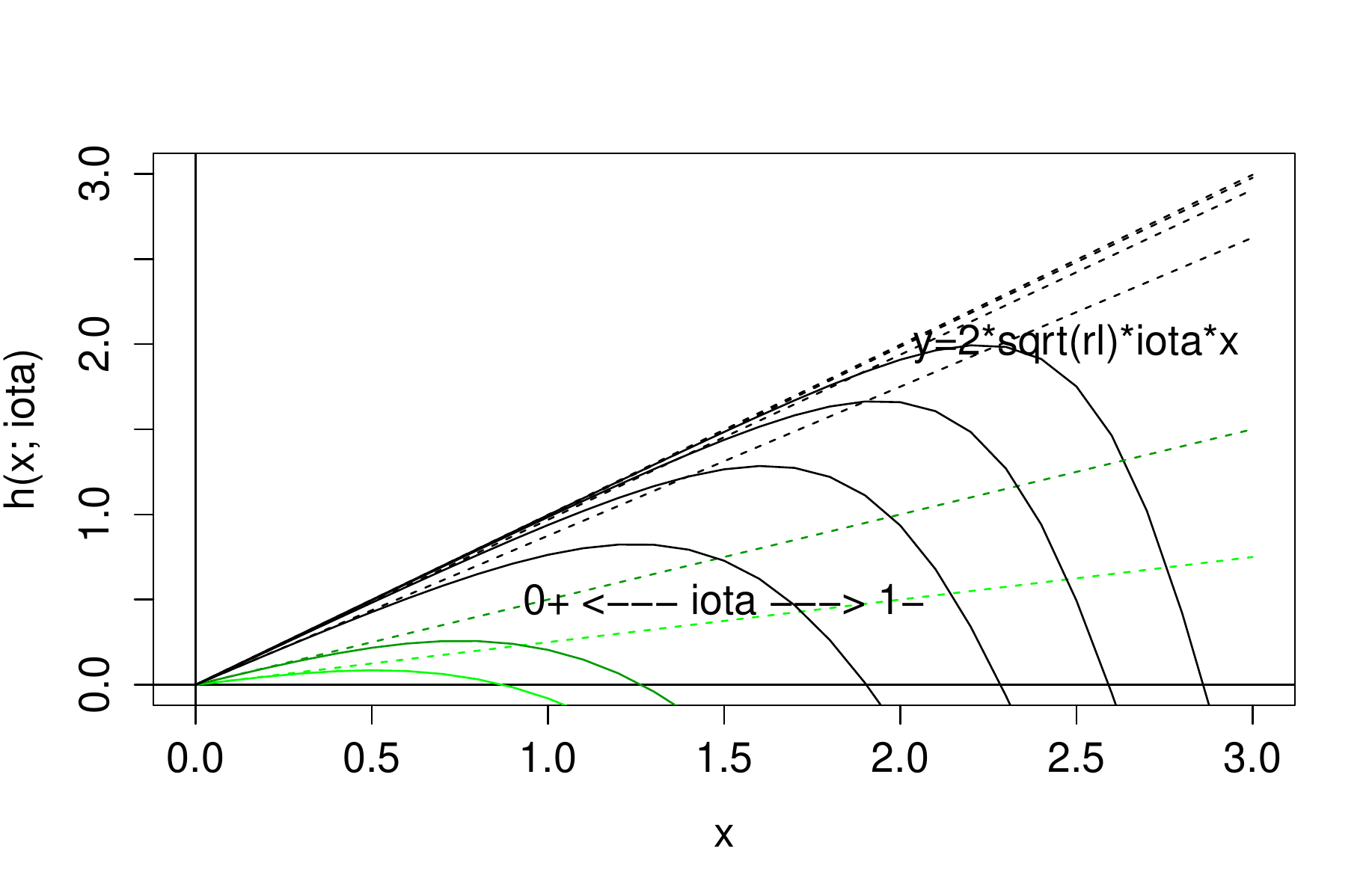}
\end{center}
\caption{Qualitative behaviour of $h(x; \iota)$}
\end{figure}
\begin{proof}
\emph{For fixed $\iota$.} The asymptotic behaviour of $h$ follows from Lemma \ref{lem: g}.  The second property is clear since
\begin{align*}
h'(x; \iota)& =2(rl)^{1/2}\Big(-2(1-\iota)g'(z)z + \big(1- (1-\iota)g(z)\big)\Big)\\
&=2(rl)^{1/2}( -2(1-\iota)g'(z)z) +\frac{ h(x)}{x},
\end{align*}
with $z= \frac{1}{2\alpha^2}x^2$. Finally, we get
$$
h''(x; \iota) = \frac{d}{dz}h'(z; \iota) z'(x)=- 2(rl)^{1/2}(1-\iota)(3g'(z)+ 2g''(z)z) z'(x).
$$Furthermore, we have
\begin{align*}
3g'(z)+ 2g''(z)z&=
3g'(z) + 2z\Big(g'(z)\big(1-(1-\iota)g(z)\big) + \frac{1}{2z^2}\big(g(z) -1-zg'(z)\big)\Big)\\
&=
2zg'(z)\big(1-(1-\iota)g(z)\big) + \frac{1}{z}(g(z) - 1) + 2 g'(z)
\end{align*}
which is strictly positive term by term for $x\in [0, \bar{x}_\iota]$.\\

\emph{For fixed $x$.} The limit of $h$ as $\iota \to 1-$ follows from the fact that $\Kummer(a, b, z)$ is entire in $a$. Now we show that $h$ is monotone in $\iota$. Let $G:= \partial_\iota \Kummer$, we have
\begin{align*}
\partial_\iota h(x; \iota)
&= -2al \frac{\partial}{\partial \iota}\frac{\partial }{\partial x}\ln \Kummer(\frac{1-\iota}{4}; \frac{1}{2}; \frac{1}{2\alpha^2}x^2)\\
& =-2al \frac{\partial}{\partial x}\frac{\partial }{\partial \iota}\ln \Kummer(\frac{1-\iota}{4}; \frac{1}{2}; \frac{1}{2\alpha^2}x^2)\\
&=\frac{1}{2} al \frac{\partial}{\partial x} \frac{G}{\Kummer}(\frac{1-\iota}{4}; \frac{1}{2}; \frac{1}{2\alpha^2}x^2)\\
&= \frac{1}{2}al z'(x)\frac{\Kummer \frac{\partial}{\partial z} G - G \frac{\partial}{\partial z} \Kummer}{\Kummer^2}(\frac{1-\iota}{4}; \frac{1}{2}; z) ,
\end{align*}
with $z(x) = \frac{1}{2 \alpha^2 }x^2$.
It is enough to show that the last term is positive. Using the series representation of $\Kummer$ and $G$ (see Lemma \ref{lem: kummer}), write 
\begin{equation*}
\Kummer = \sum_{k=0}^\infty f_k z^k, \quad G=\sum_{k=0}^\infty \beta_k f_k z^k, 
\end{equation*}
with 
\begin{equation*}
\beta_k = \sum_{p=0}^{k-1}\frac{1}{p+a}, \quad a=\frac{1-\iota}{4}.
\end{equation*}
We get
\begin{align*}
\Kummer \frac{\partial}{\partial x} G - G \frac{\partial}{\partial x} \Kummer
&= \Big(\sum_{i=0}^\infty f_i z^i\Big)\Big(\sum_{j=0}^\infty (j+1)\beta_{j+1} f_{j+1} z^{j}\Big) -\Big (\sum_{i=0}^\infty (i+1)f_{i+1} z^i\Big)\Big(\sum_{j=0}^\infty \beta_j f_j z^j\Big)\\
&= \sum_{k=0}^\infty \Big( \sum_{i+j=k} f_i(j+1)\beta_{j+1} f_{j+1} \Big) z^k - \sum_{k=0}^\infty \Big( \sum_{i+j=k} (i+1)f_{i+1}\beta_{j} f_{j} \Big) z^k.
\end{align*}
Then the coefficient of $z^k$ is given by 
\begin{align*}
&\quad \sum_{i+j=k} f_i(j+1)\beta_{j+1} f_{j+1}  - \sum_{i+j=k} (i+1)f_{i+1}\beta_{j} f_{j} \\
&=
\sum_{i+j=k}(j+1)(\beta_{j+1}-\beta_i) f_i f_{j+1}\\
&=\sum_{1\leq i < j+1 \leq k}\Big( (j+1)(\beta_{j+1}-\beta_i) f_i f_{j+1} - i(\beta_{i}-\beta_{j+1}) f_{j+1} f_{i}\Big)+ \sum_{j+1=k+1}(\cdots) + \sum_{i=j+1}(\cdots)\\
&=\sum_{1\leq i < j+1 \leq k} (j+1-i)(\beta_{j+1}-\beta_i) f_i f_{j+1}+ \sum_{j+1=k+1}(\cdots) + \sum_{i=j+1}(\cdots).
\end{align*}
This term is positive since $\beta_k$ is increasing in $k$.  Hence $\partial_\iota h > 0$. Thus $h(x; \iota)$ is increasing in $\iota$ for fixed $x\in \bbR^+$. \\

From the relation between $g$ and $h$,  $\bar{z}_\iota$ is the first solution of 
\begin{equation*}
1 - (1-\iota) g(z)= 0, \quad z >0. 
\end{equation*}
Moreover, we have
\begin{equation*}
g(z) = 1 + (1 - \frac{1-\iota}{2}) z + o(z), \quad z \to 0+.
\end{equation*}
Then uniformly on $\iota$, $g(z)$ is bounded from below by $1+ \frac{1}{3}z$ on $[0, z_0]$, hence 
\begin{equation*}
\bar{z}_\iota \leq \frac{3\iota}{1- \iota}= O(\iota), \quad \iota \to 0+. 
\end{equation*}
Finally, we have
\begin{equation*}
\max_{[0, \bar{x}_\iota]} h(x; \iota) \leq 2(rl)^{1/2} \bar{x}_\iota \to 0, \quad \iota \to 0+. 
\end{equation*}
\end{proof}

\begin{prop}\label{theo: h}
For any parameters $r, l, h >0$ and $k\geq 0$, there exist $\iota\in (0, 1)$ and $0 \leq U + \xi \leq U$ such that
\begin{align*}
&\int_{U+\xi}^U h(x; \iota) dx = k - h \xi, \\
&h(U; \iota) = h,\\
&h(U+\xi; \iota) = h, \\
&h(x; \iota) \in \begin{cases} (0, h), & 0 \leq x\leq U + \xi, \\
(h, \infty), &U + \xi \leq x\leq U, \end{cases}\\
& h''(x; \iota) < 0,\quad 0\leq x \leq U. 
\end{align*}
Moreover, $(\iota, \xi, U)$ depends continuously on $(r, l, k, h)$. 
\end{prop}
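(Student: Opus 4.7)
\emph{Proof plan.} For each fixed $\iota\in(0,1)$, the previous lemma establishes that $x\mapsto h(x;\iota)$ is smooth, vanishes at $0$ and $\bar{x}_\iota$, is strictly concave on $[0,\bar{x}_\iota]$, and is strictly positive in between. In particular it attains a unique maximum $M(\iota):=\max_{[0,\bar{x}_\iota]}h(\,\cdot\,;\iota)$, and whenever $M(\iota)>h$ the equation $h(x;\iota)=h$ has exactly two solutions $0<U_-(\iota)<U_+(\iota)<\bar{x}_\iota$. I then set $U=U_+(\iota)$ and $\xi=U_-(\iota)-U_+(\iota)<0$; the conditions $h(U;\iota)=h(U+\xi;\iota)=h$ are tautological, and the strict concavity together with $h(U_\pm;\iota)=h$ immediately yields the sign/monotonicity conditions $h(x;\iota)\in(0,h)$ on $[0,U_-)$, $\in(h,\infty)$ on $(U_-,U_+)$, and $h''(x;\iota)<0$ on $[0,U_+]$. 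So the only thing left is to choose $\iota$ so that the integral condition holds.

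\emph{Reduction to a scalar equation in $\iota$.} After substituting $\xi^*=-\xi$, the integral condition reads
\begin{equation*}
\Psi(\iota):=\int_{U_-(\iota)}^{U_+(\iota)}\bigl(h(x;\iota)-h\bigr)\,dx=k,
\end{equation*}
i.e.\ the area trapped between the graph of $h(\,\cdot\,;\iota)$ and the horizontal line at height $h$ must equal $k$. Using $\partial_\iota h>0$ together with the limits from the previous lemma ($M(\iota)\to 0$ as $\iota\to 0+$ and $h(x;\iota)\to 2(rl)^{1/2}x$ as $\iota\to 1-$), I first show that $M$ is continuous and strictly increases from $0$ to $+\infty$, so that there is a unique threshold $\iota_0\in(0,1)$ with $M(\iota_0)=h$. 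On $(\iota_0,1)$ the two simple roots $U_\pm(\iota)$ are smooth in $\iota$ by the implicit function theorem (their $x$-derivatives are nonzero by concavity). Differentiating $\Psi$ under the integral, the boundary terms drop out because $h(U_\pm;\iota)=h$, giving $\Psi'(\iota)=\int_{U_-(\iota)}^{U_+(\iota)}\partial_\iota h(x;\iota)\,dx>0$, so $\Psi$ is continuous and strictly increasing, with $\Psi(\iota)\to 0$ as $\iota\downarrow\iota_0$ (the endpoints coalesce) and $\Psi(\iota)\to+\infty$ as $\iota\uparrow 1$. The intermediate value theorem then provides a unique $\iota\in[\iota_0,1)$ solving $\Psi(\iota)=k$ for every $k\geq 0$. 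Continuous (in fact smooth) dependence of $(\iota,U,\xi)$ on $(r,l,k,h)$ follows from one more application of the implicit function theorem, using the nondegeneracies $\Psi'(\iota)>0$ and $\partial_x h(U_\pm;\iota)\neq 0$.

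\emph{Main obstacle.} The principal technical point is the divergence $\Psi(\iota)\to+\infty$ as $\iota\to 1-$: one must simultaneously control the growth of $U_+(\iota)$ and of the integrand. The cleanest route is to exploit the monotone pointwise convergence $h(\,\cdot\,;\iota)\uparrow 2(rl)^{1/2}x$ on compact intervals: for any $X>h/(2(rl)^{1/2})$ and any small $\eta>0$, eventually $h(x;\iota)\geq h$ throughout $[h/(2(rl)^{1/2})+\eta,X]$, so that
\begin{equation*}
\Psi(\iota)\geq\int_{h/(2(rl)^{1/2})+\eta}^{X}\bigl(h(x;\iota)-h\bigr)\,dx,
\end{equation*}
which can be made arbitrarily large by first choosing $X$ large and then letting $\iota\uparrow 1$. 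A secondary subtlety is the boundary case $k=0$, corresponding to $\iota=\iota_0$ and $\xi=0$, where the construction degenerates to the pure singular-control profile of Example \ref{ex: combined singular}.
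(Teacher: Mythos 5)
Your proposal is correct and follows essentially the same route as the paper: reduce everything to a scalar equation in $\iota$ for the area function (your $\Psi(\iota)=\int_{U_-}^{U_+}(h(x;\iota)-h)\,dx$ is just the paper's $K(\iota)=\int_{U_\iota+\xi_\iota}^{U_\iota}h(x;\iota)\,dx$ rewritten via the endpoint conditions), then use monotonicity of $h$ in $\iota$, the limits at the threshold value and at $\iota\to 1-$, the intermediate value theorem for existence, and the implicit function theorem (with $\partial_\iota\Psi>0$ and $h'(U_\pm;\iota)\neq 0$) for continuous dependence. Your treatment of the divergence $\Psi(\iota)\to\infty$ is in fact spelled out more explicitly than in the paper, and your handling of $k=0$ as the degenerate case matches the paper's separate argument at the maximizer.
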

\begin{proof}
\emph{Existence. } Let $k>0$. Since $h(x;\iota)$ is monotone in $\iota$ and $h(x; \iota)\to 2 (rl)^{1/2} x $ as $\iota\to 1-$,  there exists $\underline{\iota}= \underline{\iota}(h)\geq 0$ such that
\begin{equation*}
(\underline{\iota}, 1) = \{\iota \in(0, 1),\text{ there exists exactly two solutions $U_\iota+\xi_\iota$ and $U_\iota$ on $[0, \bar{x}_\iota]$.}\}
\end{equation*}
 We have
\begin{equation*}
h'(U_\iota+\xi_\iota; \iota) >0, \quad h'(U_\iota) < 0,
\end{equation*}
so by the implicit function theorem, $U_\iota$ and $U_\iota+\xi_\iota$ depend continuously on $\iota$. Define 
\begin{equation*}
K(\iota) = \int_{U_\iota+\xi_\iota}^{U_\iota} h(x; \iota) dx.
\end{equation*}
Then $K$ is continuous in $\iota$ and 
\begin{equation*}
\lim_{\iota \to \underline{\iota}} K(\iota) =0, \quad \lim_{\iota\to 1-}K(\iota) = \infty. 
\end{equation*}
Hence there exists $\iota(h, k) \in (\underline{\iota}(h), 1)$ such that
$K(\iota(h, k)) = k$. The remaining property of $h$ is easily verified.\\

If $k=0$, then there exists exactly one $\iota(h)\in (0, 1) $ such that the maximum of $h(x; \iota)$ is $h$ and is attained by $U_\iota$ such that 
\begin{equation*}
h'(U_\iota; \iota) =0. 
\end{equation*}
Since $h''(U_\iota; \iota) <0$, $U_\iota$ depends continuously on $\iota$ by the implicit function theorem. \\

\emph{Continuous dependence.} Since $\xi$ and $U$ depend continuously on $\iota$, it suffices to show that $\iota$ depends continuously on the parameters $a, r, h, k, l$. To see this, note that $\iota=\iota(a, l, r, h, k)$ is determined by
\begin{equation*}
K(\iota; a, r, l, k, h)=k. 
\end{equation*}
But, we have 
\begin{equation*}
\frac{\partial}{\partial \iota} K(\iota; a, r, l, k, h) >0.
\end{equation*}
Thus $\iota$ depends continuously on the parameters by the implicit function theorem. 
\end{proof}

\begin{proof}[Proof of Lemma \ref{lem: value function}]
Extend the function $w$ in Proposition \ref{theo: h} to $\bbR$ by 
\begin{equation*}
w(x) = \begin{cases}
w(|x|), &|x|\leq U, \\
w(U) + h(|x|-U), & |x| > U.
\end{cases}
\end{equation*}
Then \eqref{eqn: pde}-\eqref{eqn: cond A} hold. By \eqref{eqn: C0 fit} and \eqref{eqn: C1 fit}, we have $w\in C^1(\bbR)\cap C^2(\bbR\setminus\{U, U\})$. 
\end{proof}

%\subsection{Remarks}

%
%\begin{itemize}
%\item
%Comments on property of $\Gamma^*$.
%
%\end{itemize}

\section{Tightness function} \label{sec: tightness function}
For more details on the following results, see \cite[Appendix A.3]{dupuis2011weak} and \cite{bogachev2007measure}. 
\begin{dfn}
A measurable real-valued function $g$ on a metric space $g: (E, d) \to \bbR\cup \{\infty\}$ is a tightness function if 
\begin{enumerate}
\item
$\inf_{x\in E}g(x) > -\infty$. 
\item 
$\forall M < \infty$, the level set $\{x\in E | g(x)\leq M\}$ is a relatively compact subset of $(E, d)$.
\end{enumerate}
\end{dfn}

\begin{lem}\label{lem: tightness function}
If $g$ is a tightness function on a Polish space $E$, then 
\begin{enumerate}
\item
The function $G(\mu) = \int_E g(x)\mu(dx)$ is a tightness function on $\scrP(E)$.
\item 
If in addition $g\geq \delta$ where $\delta$ is a positive constant and $E$ is compact, then 
$G(\mu) = \int_E g(x)\mu(dx)$ is a tightness function on $\calM(E)$.
\end{enumerate}
\end{lem}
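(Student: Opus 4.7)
\textbf{Proof plan for Lemma \ref{lem: tightness function}.} The plan is to verify the two defining properties of a tightness function (boundedness below and relatively compact level sets) in each case, reducing both items to standard weak compactness results via Markov's inequality.

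\textbf{Part (i).} First I would establish boundedness below: let $c_0 = \inf_{x\in E} g(x) > -\infty$; then for every $\mu \in \scrP(E)$, $G(\mu) = \int_E g\,d\mu \geq c_0 > -\infty$. For the relative compactness of the level set $\{\mu \in \scrP(E) : G(\mu) \leq M\}$, I would normalize by introducing $\tilde g = g - c_0 + 1 \geq 1$, which is again a tightness function since its level sets $\{\tilde g \leq N\}$ coincide with $\{g \leq N + c_0 - 1\}$, hence are relatively compact. The key step is then Markov's inequality: for any $N > 0$,
\begin{equation*}
\mu(\{\tilde g > N\}) \leq \frac{1}{N}\int_E \tilde g\, d\mu = \frac{G(\mu) - c_0 + 1}{N} \leq \frac{M - c_0 + 1}{N}.
\end{equation*}
Given $\varepsilon > 0$, choose $N$ so that $(M - c_0 + 1)/N < \varepsilon$, and let $K_\varepsilon = \overline{\{x \in E : \tilde g(x) \leq N\}}$, which is compact by the tightness property of $\tilde g$. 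Then $\mu(K_\varepsilon^c) \leq \varepsilon$ uniformly over the level set, so the level set is tight. Since $E$ is Polish, Prokhorov's theorem gives relative compactness in $\scrP(E)$ with the weak topology.

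\textbf{Part (ii).} Here $E$ is compact and $g \geq \delta > 0$. Boundedness below is immediate: $G(\mu) \geq \delta\, \mu(E) \geq 0$ for $\mu \in \calM(E)$. For the level sets, observe that if $G(\mu) \leq M$ then $\mu(E) \leq M/\delta$, so the total masses are uniformly bounded by $M/\delta$. Since $E$ is a compact Polish space, tightness of a family of measures in $\calM(E)$ is automatic; combined with the uniform bound on total mass, the family $\{\mu : G(\mu) \leq M\}$ is relatively compact in $\calM(E)$ equipped with the weak topology (this is the Banach--Alaoglu/Helly-type statement for nonnegative Borel measures on a compact metric space, equivalent to the fact that $\calM(E)$ is the nonnegative cone of the dual of $C(E)$).

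\textbf{Main obstacle.} The argument is essentially a routine application of Markov's inequality plus Prokhorov; the only subtle point is the reduction to a positive $g$ in part (i) (needed to apply Markov cleanly) and being precise that the definition of ``tightness function'' used here requires only boundedness below and relatively compact level sets, not lower semicontinuity. No further ingredients are needed.
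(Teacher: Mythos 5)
Your proof is correct. Part (i) is essentially the argument the paper relies on (it simply cites Dupuis--Ellis, whose proof is the same Markov-inequality-plus-Prokhorov reduction you give, including the harmless shift to make $g$ positive). Part (ii), however, goes by a genuinely different route: you use the compactness of $E$ directly, bounding the total masses by $M/\delta$ and invoking the Banach--Alaoglu/Riesz compactness of the mass-bounded cone in $\calM(E)$ for compact $E$. The paper instead never really uses compactness of $E$ in its argument for part (ii): it takes a sequence $\{\mu_n\}$ in the level set, notes $\mu_n(E)\leq M/\delta$, normalizes $\mu_n/\mu_n(E)\in\scrP(E)$ and applies part (i) to get uniform tightness of the normalized measures (on the set where $\mu_n(E)>\varepsilon$), then concludes relative compactness in $\calM(E)$ from a general criterion (bounded total mass plus tightness, Bogachev Thm.\ 8.6.2). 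The trade-off: your argument is shorter and more elementary given that the statement assumes $E$ compact (which is all that is needed in the application, where the space has been one-point compactified), while the paper's normalization argument is more general in that it would also cover a non-compact Polish $E$ and makes part (ii) a corollary of part (i); your version does require the standard identification of weak convergence in $\calM(E)$ with weak-$*$ convergence against $C(E)=C_b(E)$ and its metrizability, which you correctly flag.
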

\begin{proof}
Note that $\calM(E)$ is a metric space, thus sequential compactness is equivalent to relative compactness (see \cite[pp. 303]{dupuis2011weak} for the metric). 
For the first property, see \cite[pp. 309]{dupuis2011weak}. For the second property,  we consider the level set $\{\mu \in \calM(E)| G(\mu) \leq M\}$ and let $\{\mu_n\}$ be any sequence in the level set. By \cite[Theorem 8.6.2]{bogachev2007measure}, it is enough to show that 
\begin{enumerate}
\item
The sequence of nonnegative real numbers $\mu_n(E)$ is bounded. 
\item 
The family $\{\mu_n\}$ is tight. 
\end{enumerate}
Since $g\geq \delta$, we have $\mu(E) \leq G(\mu)/\delta\leq M/\delta$. Hence the first condition is true. On the other hand, for any $\varepsilon>0$, we consider $\mu_n/\mu_n(E)\in \scrP(E)$. Then $G(\mu_n /\mu_n(E)) \leq M/\mu_n(E) \leq M/\varepsilon$, if $\mu_n(E) >\varepsilon$. Since $G$ is a tightness function, we deduce that $\{\mu_n \text{ }| \text{ } \mu_n(E) >\varepsilon\} $ is tight. Therefore $\{\mu_n\}$ is tight and the second condition follows.% \marginnote{simplicifation??}
\end{proof}

\section{Convergence in probability, stable convergence}\label{sec: convergence notion}

Let $(\Omega, \calF)$ be a measurable space and $(E, \mathcal{E})$ a Polish space where $\mathcal{E}$ is the Borel algebra of $E$. Define 
\begin{equation*}
\overline{\Omega} = \Omega\times E, \quad \overline{\calF} = \calF \otimes \mathcal{E}.
\end{equation*}
Let $B_{mc}(\overline{\Omega})$ be the set of  bounded mesurable functions $g$ such that $z\mapsto g(\omega, z)$ is a continuous application for any $\omega\in\Omega$. 
Let $M_{mc}(\overline{\Omega})$ be the set of finite positive measures on $(\overline{\Omega}, \overline{\calF})$, equiped with the weakest topology such that
\begin{equation*}
\mu \mapsto \int_{\overline{\Omega}} g(\omega, z) \mu(d\omega, dz),
\end{equation*}
is continuous for any $g\in B_{mc}(\overline{\Omega})$. \\

We fix a probability measure $\bbP$ on $(\Omega, \calF)$.  Let $\scrP^\bbP(\Omega\times E, \calF\otimes \mathcal{E})\subset M_{mc}(\overline{\Omega})$ be the set of probability measures on $\overline{\Omega}$ with marginal $\bbP$ on $\Omega$, equiped with the induced topology from $M_{mc}(\overline{\Omega})$. Note that $\scrP^\bbP(\Omega\times E, \calF\otimes \mathcal{E})$ is a closed subset of $M_{mc}(\overline{\Omega})$. For any random variable $Z$ defined on the probability space $(\Omega, \calF, \bbP)$, we  define
\begin{equation*}
\bbQ^{Z}(d\omega, dz) := \bbP(d \omega)\otimes \delta_{Z(\omega)}(dz) \in \scrP^\bbP(\Omega\times E, \calF\otimes \mathcal{E}).
\end{equation*}
\begin{dfn}[Stable convergence]
Let $\{Z^\varepsilon, \varepsilon > 0\}$ be random variables defined on the same probability space $(\Omega, \calF, \bbP)$ with values in the Polish space $(E, \mathcal{E})$.  
 We say that $Z^\varepsilon$ converges stably in  law to $\bbQ \in \scrP^\bbP(\Omega\times E, \calF\otimes \mathcal{E}) $, written $Z^\varepsilon \to_{\text{stable}} \bbQ$, if $\bbQ^{Z^\varepsilon} \to \bbQ$ in $M_{mc}(\overline{\Omega})$. 
\end{dfn}

We use the following properties in our proofs. 

\begin{prop}\label{prop: stable}
Let $\{Z^\varepsilon, \varepsilon > 0\}$ be random variables on the probability space $(\Omega, \calF, \bbP)$ with values in $(E, \mathcal{E})$. 
\begin{enumerate}
\item 
We have $Z^\varepsilon \to_{\text{stable}}\bbQ\in \scrP^\bbP(\Omega\times E, \calF\otimes \mathcal{E})$ if and only if
\begin{equation*}
\E{Yf(Z^\varepsilon)}\to \bbE^\bbQ\crochetb{Y f(z)},
\end{equation*}
for all bounded random variables  $Y$ on $(\Omega, \calF)$ and all bounded continuous functions $f\in C_b(E, \bbR)$.

\item 
Assume that $Z^\varepsilon \to_{\text{stable}} \bbQ \in \scrP^\bbP(\Omega\times E, \calF\otimes \mathcal{E})$. Then 
\begin{equation}\label{eqn: stable lsc}
\liminf_{\varepsilon \to 0}\bbE[g(Z^\varepsilon)] \geq \bbE^{\bbQ}[g(\omega, z)],
\end{equation}
for any $g$ bounded from below with lower semi-continuous section $g(\omega, \cdot)$ on $E$. 

\item
Let $Z$ be a random variable defined on $(\Omega, \calF, \bbP)$. We have
\begin{equation*}
Z^\varepsilon\to_p Z \Leftrightarrow Z^\varepsilon \to_{\text{stable}} \bbQ^Z. 
\end{equation*}

\item 
The sequence $\{\bbQ^{Z^\varepsilon}, \varepsilon >0\}$ is relatively compact in $\scrP^\bbP(\Omega\times E, \calF\otimes \mathcal{E})$ if and only if $\{Z^\varepsilon, \varepsilon >0\}$ is relatively compact as subset of $\scrP(E)$.  In particular, if $E$ is compact, then $\scrP^\bbP(\Omega\times E, \calF\otimes \mathcal{E})$ is compact. 
\end{enumerate}

 \end{prop}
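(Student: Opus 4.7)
\textbf{Proof plan for Proposition \ref{prop: stable}.} The plan is to treat the four assertions in order, using only the definition that $Z^\varepsilon\to_{\text{stable}}\bbQ$ means $\bbE[g(\omega,Z^\varepsilon(\omega))]\to\int g\,d\bbQ$ for every $g\in B_{mc}(\overline{\Omega})$. For item (1), the forward direction is immediate since $(\omega,z)\mapsto Y(\omega)f(z)$ lies in $B_{mc}(\overline{\Omega})$ whenever $Y$ is bounded measurable on $\Omega$ and $f\in C_b(E)$. The converse I would establish via a functional monotone class argument: the collection of $g\in B_{mc}(\overline{\Omega})$ for which $\bbE[g(\omega,Z^\varepsilon)]\to\int g\,d\bbQ$ is a vector space that is stable under uniformly bounded monotone pointwise limits and contains the products $Y\otimes f$, hence contains all of $B_{mc}(\overline{\Omega})$.

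Item (2) follows by approximating each lower semicontinuous section $g(\omega,\cdot)$ from below by the Moreau--Yosida regularization $g_n(\omega,z):=\inf_{z'\in E}\{g(\omega,z')+n\,d(z,z')\}$, which is jointly measurable, Lipschitz (hence continuous) in $z$, and satisfies $g_n\uparrow g$ pointwise as $n\to\infty$. Applying the definition of stable convergence to each $g_n\in B_{mc}(\overline{\Omega})$ yields $\liminf_{\varepsilon}\bbE[g(Z^\varepsilon)]\geq \lim_\varepsilon \bbE[g_n(Z^\varepsilon)]=\bbE^\bbQ[g_n]$, and monotone convergence on the right as $n\to\infty$ completes the inequality. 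Item (3) is then a direct consequence of (1) and (2): for the direction $\to_p\Rightarrow\to_{\text{stable}}$, any $Yf(Z^\varepsilon)$ with $Y$ bounded and $f\in C_b(E)$ converges in probability to $Yf(Z)$ and is uniformly bounded, hence in $L^1(\bbP)$, and the limit equals $\int Y(\omega)f(z)\,\bbQ^Z(d\omega,dz)$; the converse uses the test function $g(\omega,z):=d(z,Z(\omega))\wedge 1\in B_{mc}(\overline{\Omega})$, whose $\bbQ^Z$-integral equals $\bbE[d(Z,Z)\wedge 1]=0$, so stable convergence forces $\bbE[d(Z^\varepsilon,Z)\wedge 1]\to 0$.

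For item (4), the marginal map $\bbQ\mapsto\bbQ(\Omega\times\cdot)$ from $\scrP^\bbP(\Omega\times E)$ to $\scrP(E)$ is continuous (test against $1\otimes f$), so relative compactness of $\{\bbQ^{Z^\varepsilon}\}$ transfers to tightness of the image laws $\calL(Z^\varepsilon)$. Conversely, given a tight family $\{\calL(Z^\varepsilon)\}$, for each $\eta>0$ choose a compact $K_\eta\subset E$ with $\sup_\varepsilon \bbP(Z^\varepsilon\notin K_\eta)<\eta$; since each $\bbQ^{Z^\varepsilon}$ has fixed $\Omega$-marginal $\bbP$, this uniform tightness lifts to tightness of $\{\bbQ^{Z^\varepsilon}\}$ in $\scrP^\bbP$, and any subsequential accumulation point has $\bbP$ as its $\Omega$-marginal by testing against functions $Y\otimes 1$. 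When $E$ is compact the tightness hypothesis is vacuous, so $\scrP^\bbP(\Omega\times E)$ is itself compact. The main obstacle is the density step in item (1): the topology on $M_{mc}$ is strictly weaker than the product weak topology because $g\in B_{mc}$ need only be continuous in $z$, and one must carefully verify that the products $Y(\omega)f(z)$ generate $B_{mc}$ under bounded monotone limits --- once this functional monotone class step is secured, items (2), (3) and (4) follow as outlined.
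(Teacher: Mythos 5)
Your overall route is more ambitious than the paper's: the paper disposes of items (1), (2) and (4) by citing Jacod and M\'emin (Propositions 2.4 and 2.11, Theorem 3.8 and Corollary 3.9 of \cite{jacod1981type}) and only argues item (3) directly, using exactly your test function $F(\omega,z)=|Z(\omega)-z|\wedge 1$ plus Markov's inequality, so that part of your proposal matches the paper. The self-contained parts, however, have a genuine gap at the step you yourself flag as the crux. The functional monotone class argument for the converse of item (1) cannot work: the class $H$ of those $g$ for which $\bbE[g(\omega,Z^\varepsilon)]\to\int g\,d\bbQ$ is \emph{not} stable under uniformly bounded monotone pointwise limits. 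If it were, then since the products $Y\otimes f$ form a multiplicative class generating $\calF\otimes\mathcal{E}$, the monotone class theorem would force convergence against \emph{every} bounded jointly measurable $g$, which is false: take $Z^\varepsilon\equiv\varepsilon$ deterministic, so $\bbQ=\bbP\otimes\delta_0$, and $g(\omega,z)=\ind{z=0}$; then $\bbE[g(Z^\varepsilon)]=0$ while $\int g\,d\bbQ=1$. Monotone limits of elements of $B_{mc}$ leave $B_{mc}$ (they are merely l.s.c.\ in $z$) and only yield the one-sided inequality of item (2), not equality. The correct argument — essentially the one in \cite{jacod1981type} — first obtains tightness of the laws of $Z^\varepsilon$ (take $Y\equiv1$ and apply Prokhorov on the Polish space $E$), then approximates a bounded Carath\'eodory function on $\Omega\times K$, $K$ compact, by countable sums $\sum_i\indset{A_i}(\omega)f_i(z)$ using separability of $C(K)$, and finishes with an $\varepsilon/3$ estimate. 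Since your item (3) (direction $\to_p\Rightarrow\to_{\text{stable}}$) and item (4) are routed through (1), the gap propagates.

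Two smaller points. In item (2) the inf-convolutions $g_n$ are Lipschitz in $z$ and increase to $g$, but they need not be bounded above, hence are not in $B_{mc}$ as used; you must truncate ($g_n\wedge m$, letting $m\to\infty$ then $n\to\infty$), and the measurability of $\omega\mapsto g_n(\omega,z)$ is not automatic (for an l.s.c.\ integrand the infimum cannot be restricted to a countable dense set), so it requires a normal-integrand/measurable-selection argument or completeness of $\calF$. In item (4), the phrase ``this uniform tightness lifts to tightness of $\{\bbQ^{Z^\varepsilon}\}$ in $\scrP^\bbP$'' asserts precisely the Prokhorov-type compactness theorem for the stable topology that the paper imports from \cite[Theorem 3.8]{jacod1981type}; since $\Omega$ carries no topology this is not a routine tightness statement, and it needs either that citation or a real proof (for instance via weak-$*$ compactness in the dual of $L^1(\bbP;C(K))$, where separability issues must also be addressed).
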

\begin{proof}
$~$\\
1.  This is a direct consequence of \cite[Proposition 2.4]{jacod1981type}.

2. This is generalization of the Portmanteau theorem, see \cite[Proposition 2.11]{jacod1981type}. %\marginnote{error in jacod ?}

3. The $\Rightarrow$ implication is obvious. Let us prove the other. Consider $F(\omega, z) = |Z(\omega)- z|\wedge 1$. On the one hand, we have $\bbE[F(\omega, Z^\varepsilon)]\to\bbE^{\bbQ^Z}[F(\omega, z)] = 0$ by definition. On  the other hand, for any $\delta\in (0, 1)$ we have 
\begin{equation*}
 \bbP[|Z^\varepsilon - Z|>\delta]  \leq \bbE[F(\omega, Z^\varepsilon) > \delta]\leq \frac{\bbE[F(\omega, Z^\varepsilon)] }{\delta},
\end{equation*}
by Markov inequality. We deduce that $Z^\varepsilon \to _p Z$. 

4. See \cite[Theorem 3.8 and Corollary 3.9]{jacod1981type}.
\end{proof}

\begin{lem}\label{lem: convergence}
Let $\{Z, Z^\varepsilon, \varepsilon > 0\}$ be \emph{positive} random variables on the probability space $(\Omega, \calF, \bbP)$. 
If, for any random variable $Y$ defined on $(\Omega, \calF, \bbP)$ with $c_Y\leq Y\leq C_Y$ where $c_Y, C_Y$ are positive constants depending on $Y$,
$$\liminf_{\varepsilon\to 0} \E{YZ^\varepsilon} \geq \E{YZ},$$
then
$$\liminf_{\varepsilon\to 0} Z^\varepsilon \geq_p Z.
$$
\end{lem}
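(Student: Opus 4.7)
The argument I would follow is a contradiction coupled with weak-$*$ compactness of $L^\infty(\bbP)$. Suppose the conclusion fails, so there exist $\delta>0$, $\eta>0$, and a sequence $\varepsilon_n\to 0$ such that the events $A_n:=\{Z^{\varepsilon_n}\leq Z-\delta\}$ satisfy $\bbP[A_n]\geq \eta$ for all $n$. Applying the hypothesis with $Y\equiv 1$ gives $\liminf_n\bbE[Z^{\varepsilon_n}]\geq\bbE[Z]$; combined with the uniform bound $\sup_n\bbE[Z^{\varepsilon_n}]<\infty$ provided by \eqref{eqn: bounded cost} in the context where the lemma is invoked, this forces $\bbE[Z]<\infty$. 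I would then truncate: choose $N$ with $\bbP[Z>N]<\eta/2$; the truncated events $\tilde A_n:=A_n\cap\{Z\leq N\}$ still satisfy $\bbP[\tilde A_n]\geq \eta/2$, and on $\tilde A_n$ we have the pointwise bound $0\leq Z^{\varepsilon_n}\leq Z-\delta\leq N-\delta$.

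Next I would extract a fixed ``target'' out of the moving events $\tilde A_n$. The sequence $\{\mathbbm{1}_{\tilde A_n}\}$ is bounded by $1$ in $L^\infty=(L^1)^*$, so by Banach-Alaoglu and the separability of $L^1(\bbP)$, a subsequence (not relabeled) converges in the weak-$*$ topology to some $f\in L^\infty$. Testing against $\mathbbm{1}_{\{Z>N\}}\in L^1$ shows $f=f\,\mathbbm{1}_{\{Z\leq N\}}$ almost surely, and testing against $\mathbbm{1}\in L^1$ yields $\bbE[f]\geq \eta/2$; since $0\leq f\leq 1$ one also has $\bbE[f^2]\geq \bbE[f]^2>0$. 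For $\alpha\in(0,1)$ the random variable $Y:=f+\alpha\in[\alpha,1+\alpha]$ is admissible in the hypothesis, which gives
\begin{equation*}
\bbE[fZ]+\alpha\bbE[Z]\;\leq\;\liminf_n\bigl(\bbE[fZ^{\varepsilon_n}]+\alpha\bbE[Z^{\varepsilon_n}]\bigr).
\end{equation*}
Meanwhile, the pointwise inequality $Z^{\varepsilon_n}\mathbbm{1}_{\tilde A_n}\leq(Z-\delta)\mathbbm{1}_{\tilde A_n}$ combined with the weak-$*$ convergence tested against the bounded $L^1$ function $f(Z-\delta)\,\mathbbm{1}_{\{Z\leq N\}}$ gives
\begin{equation*}
\limsup_n\bbE[fZ^{\varepsilon_n}\mathbbm{1}_{\tilde A_n}]\;\leq\;\bbE[f^2Z]-\delta\,\bbE[f^2].
\end{equation*}
Combining these two displays with the uniform $L^1$-bound on $(Z^{\varepsilon_n})$ to control the complementary contribution $\bbE[fZ^{\varepsilon_n}\mathbbm{1}_{\tilde A_n^c}]$, and then letting $\alpha\to 0^+$, I would conclude $\delta\,\bbE[f^2]\leq 0$, contradicting $\bbE[f^2]>0$. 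Hence $\bbP[Z^{\varepsilon}\leq Z-\delta]\to 0$ for every $\delta>0$, which is the claim.

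The main obstacle will be the closing step just described. The weak-$*$ limit $f$ encodes only the ``average'' location of the moving sets $\tilde A_n$, whereas the decisive pointwise bound $Z^{\varepsilon_n}\leq Z-\delta$ lives only on $\tilde A_n$ itself; transferring the pointwise control to an estimate involving the fixed $f$ requires disposing of $\bbE[fZ^{\varepsilon_n}\mathbbm{1}_{\tilde A_n^c}]$, which a priori can be large because $Z^{\varepsilon_n}$ may concentrate mass off the moving events. This is exactly where the uniform $L^1$-boundedness $\sup_n\bbE[Z^{\varepsilon_n}]<\infty$ from \eqref{eqn: bounded cost} must be used, and it is the reason the lemma is invoked in the integrable regime of the paper rather than in full generality.
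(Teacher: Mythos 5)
Your scheme does not close, and the obstacle you flag in your last paragraph is not a technicality that the uniform $L^1$ bound can remove: it is fatal to this contradiction argument. The hypothesis of the lemma only ever produces \emph{lower} bounds on $\liminf_\varepsilon \E{YZ^\varepsilon}$, so nothing caps the complementary term $\bbE[fZ^{\varepsilon_n}\mathbbm{1}_{\tilde A_n^c}]$ in terms of $\bbE[fZ]$-type quantities; the bound $\sup_n\E{Z^{\varepsilon_n}}<\infty$ (which, incidentally, you import from \eqref{eqn: bounded cost} although it is not among the lemma's hypotheses) only yields a constant, and a constant cannot force $\delta\,\bbE[f^2]\le 0$. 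Concretely, take $\Omega=[0,1]$ with Lebesgue measure, $Z\equiv 1$, and $Z^{\varepsilon}=2$ on $B_\varepsilon$, $Z^{\varepsilon}=\tfrac{1}{2}$ off $B_\varepsilon$, where $B_\varepsilon$ is a union of small intervals with $\bbP[B_\varepsilon]=\tfrac{1}{2}$ and $\mathbbm{1}_{B_\varepsilon}\rightharpoonup\tfrac{1}{2}$ weak-$*$. Then $\E{YZ^\varepsilon}\to\tfrac{5}{4}\E{Y}>\E{YZ}$ for every admissible $Y$, and for $0<\delta\le\tfrac{1}{2}$ every intermediate fact you establish holds (here $A_n=B_{\varepsilon_n}^c$, $f\equiv\tfrac{1}{2}$, $\bbE[f^2]=\tfrac{1}{4}>0$, and both displays are satisfied), yet $\bbP[Z^{\varepsilon}>Z-\delta]=\tfrac{1}{2}$ and no contradiction is available, since $\bbE[fZ^{\varepsilon_n}]=\tfrac{5}{8}>\tfrac{1}{2}=\bbE[fZ]$: the deficit $\delta\,\bbE[f^2]$ created on $\tilde A_n$ is exactly offset by the excess mass of $Z^{\varepsilon_n}$ on $\tilde A_n^c$. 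The structural reason is the one you name yourself: weights $Y$ measurable with respect to $\calF$ only test the $\omega$-conditional average of the limiting behaviour of $Z^\varepsilon$, so oscillation of $Z^\varepsilon$ around $Z$ is invisible to the testing inequality, and no argument using only that inequality plus moment bounds can reach the conclusion.

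For comparison, the paper argues by a different compactness route: it passes to a minimizing subsequence, regards $Z^\varepsilon$ as variables in the one-point compactification $\bbR_+\cup\{\infty\}$, extracts a stable limit $\bbQ$ with canonical variable $\bar Z$, derives $\bbE^{\bbQ}[Y\bar Z]\ge\E{YZ}$ for all admissible $Y$ via \eqref{eqn: stable lsc}, asserts $\bar Z\ge Z$ ``since $Y$ is arbitrary'', and concludes $\bbP[Z^\varepsilon>Z-\delta]\to\bbQ[\bar Z>Z-\delta]=1$. Be aware that the step $\bar Z\ge Z$ there is of exactly the same nature as your missing step: since $Y$ depends on $\omega$ only, the inequality controls the conditional mean $\bbE^{\bbQ^\omega}[\bar Z]$ rather than $\bar Z$ itself, and the oscillation example above shows that the testing hypothesis alone does not yield this pointwise domination. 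So you have correctly located the crux of the lemma, but your proposal does not resolve it, and it cannot be resolved by weak-$*$ compactness and an $L^1$ bound alone; any complete argument must bring in additional input, for instance structural properties of $I^\varepsilon_t$ and $I_t$ from the application, or a version of the hypothesis that is uniform over the weights $Y$.
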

\begin{proof}
Let $\delta>0$ be any real number and, without loss of generality,  let $\{Z^\varepsilon\}$  be a minimizing sequence of $\bbP[{Z^\varepsilon > Z - \delta}]$ as $\varepsilon \to 0$. Considering the one-point compactification $\bbR_+ \cup\{\infty \}$, we can assume that $Z^\varepsilon$ converge stably to $\bbQ\in \scrP(\Omega \times (\bbR_+ \cup\{\infty \}))$ with canonical realization $\bar{Z}$. Then we have
\begin{equation*}
\E{Y\bar{Z}} \geq \limsup_{\varepsilon\to 0} \E{YZ^\varepsilon} \geq \E{YZ}, 
\end{equation*}
where the first inequality comes from the fact that $z\mapsto z$ is u.s.c. on $\bbR_+\cup\{\infty\}$ and \cite[Prop 2.11]{jacod1981type}. Since $Y$ is arbitrary, we conclude that $\bar{Z}\geq Z$. Then by stable convergence of $Z^\varepsilon$ to $\bar{Z}$, we have $
\bbP[Z^\varepsilon > Z -\delta ] \to  \bbP[\bar{Z} > Z - \delta ]=1. $
\end{proof}

%\subsection{Set-valued Process}
%\newpage

%\end{subappendices}

%%%%%%%%%%%%%%%%%%%%%%%%%%%%%%%%%%%%%%%%%%%%%%%%%%%%%%%%%%
% Bibliography
%%%%%%%%%%%%%%%%%%%%%%%%%%%%%%%%%%%%%%%%%%%%%%%%%%%%%%%%%%
%\newpage
%\bibliography{../../../library}
%\bibliography{library}

\end{document}